\documentclass[11pt, a4paper, english]{amsart}

\usepackage{amsmath, amsthm, amssymb, amsfonts}
\usepackage[unicode,colorlinks=true,linkcolor=blue,urlcolor=blue]{hyperref}
\usepackage{bm}
\usepackage{dsfont}
\usepackage{color}
\usepackage{geometry}
\usepackage{listings}
\lstset{basicstyle=\footnotesize\ttfamily,language=VBScript,tabsize=2,numbers=left,numberstyle=\tiny,frame=tb}
\usepackage{mathrsfs, enumerate}
\usepackage[textsize=tiny]{todonotes}

\usepackage{tikz}
\usetikzlibrary{arrows,shapes,automata}
\tikzstyle{vertex}=[circle,fill=black!25,minimum size=20pt,inner sep=0pt]
\tikzstyle{selected vertex} = [vertex, fill=red!24]
\tikzstyle{edge} = [draw,thick,-]
\tikzstyle{weight} = [font=\small]
\tikzstyle{selected edge} = [draw,line width=5pt,-,red!50]
\tikzstyle{ignored edge} = [draw,line width=5pt,-,black!20]

\usepackage{graphicx}
\usepackage{subcaption}
\usepackage{lipsum}

\geometry{hmargin=2cm, vmargin=2cm}

\newtheorem{theorem}{Theorem}[section]
\newtheorem{remark}[theorem]{Remark}
\newtheorem{claim}[theorem]{Claim}

\newtheorem{proposition}[theorem]{Proposition}

\newtheorem{definition}[theorem]{Definition}
\newtheorem*{notation*}{Notation}  

\newcommand\mydots{\hbox to 1em{.\hss.\hss.}}

\newcommand\Nnn{\hbox to 2em{$N$\hss$-$\hss$1$}}

\newcommand\Nnk{\hbox to 2em{$N$\hss$-$\hss$k$}}

\newcommand\PX{\hbox to 4em{$\mathcal{P}(\mathcal{X})^{T+1}$}}

\newcommand\UU{\hbox to 0.8em{$\widehat{\mathcal{R}}$}}

\newcommand\EEE{\hbox to 1em{$\widehat{\mathcal{E}}_t$}}

\definecolor{ofi_c}{rgb}{0.0, 0.42, 0.24}



\newcounter{hypcount}
\newenvironment{hypenv}{\begin{enumerate}\setcounter{enumi}{\value{hypcount}}}{\setcounter{hypcount}{\value{enumi}}\end{enumerate}}

\newcounter{hypcount2}
\newenvironment{Hypenv}{\begin{enumerate}\setcounter{enumi}{\value{hypcount2}}}{\setcounter{hypcount2}{\value{enumi}}\end{enumerate}}

\newcommand{\hypref}[1]{\textbf{(A\ref*{#1})}}
\newcommand{\hyprefall}{\textbf{(A1)\,--\,(A\arabic{hypcount})}}
\newcommand{\Hypref}[1]{\textbf{(R\ref*{#1})}}
\newcommand{\Hyprefall}{\textbf{(R1)\,--\,(R\arabic{hypcount2})}}
\newcounter{dummy}

\title[Correlated equilibria for mean field games with progressive strategies]{Correlated equilibria for mean field games with progressive strategies
}

\author{Ofelia Bonesini}
\address[Ofelia Bonesini]{Department of Mathematics, Imperial College London, London SW7 1NE, UK. \newline
	\indent Dipartimento di Matematica “Tullio Levi-Civita", Università degli studi di Padova,\newline
	\indent Via Trieste 63, 35121 Padova, Italy.}
\email[Ofelia Bonesini]{bonesini@math.unipd.it/obonesin@ic.ac.uk}%

\author{Luciano Campi}
\address[Luciano Campi]{Dipartimento di Matematica “Federigo Enriques”, Università degli studi di Milano,\newline
	\indent Via Cesare Saldini 50, 20133, Milano, Italy.}
\email[Luciano Campi]{luciano.campi@unimi.it}%

\author{Markus Fischer}
\address[Markus Fischer]{Dipartimento di Matematica “Tullio Levi-Civita", Università degli studi di Padova,\newline
	\indent Via Trieste 63, 35121 Padova, Italy}
\email[Markus Fischer]{fischer@math.unipd.it}%

\date{\today}

\numberwithin{equation}{section}
\allowdisplaybreaks

\begin{document}

\begin{abstract}
	In a discrete space and time framework, we study the mean field game limit for a class of symmetric $N$-player games based on the notion of correlated equilibrium. We give a definition of correlated solution that allows to construct approximate $N$-player correlated equilibria that are robust with respect to progressive deviations. We illustrate our definition by way of an example with explicit solutions.

\end{abstract}
\maketitle
%
%
{\textbf{Keywords}}: Nash equilibrium, correlated equilibrium, mean field game, weak convergence, exchangeability, progressive strategies.
\smallskip

{\textbf{2020 AMS subject classifications}}:
60B10, 91A06, 91A15, 91A16.

\smallskip

\section{Introduction}

	Building on \cite{CF2022}, we consider correlated equilibria for a simple class of symmetric finite horizon $N$-player games in discrete time and their natural mean field game counterpart as the number of players $N$ goes to infinity.

	\mbox{MFGs} is the acronym 
	for \emph{mean field games} and refers to a stream of  literature in game theory extremely popular nowadays whose origins are quite recent. 
	Indeed, MFGs were introduced nearly at the same time but independently by \cite{huangetal06} and \cite{lasrylions07} in the mid 2000's.
	In a nutshell, MFGs are limit systems for symmetric stochastic $N$-player games with mean field interaction for $N \to \infty$.
	Thanks to the mean field interaction among the players, a kind of law of large numbers (known as propagation of chaos), one expects the empirical distribution of the players' states to converge as $N \to \infty$ to the law of some representative player.
	In the limit, the concept of Nash equilibrium translates into a two-step solution where (i) the representative player reacts optimally to the measure flow representing the distribution of the whole population, and (ii) the latter arises as aggregation of all such identical players' best responses at equilibrium.	
	The reader interested in a broad yet detailed overview on the topic from a probabilistic viewpoint is referred to the two-volume book by Carmona and Delarue \cite{carmonadelarue18}. 	
	
	The connection between MFGs and their finite-player counterpart can be established in two ways. Crucial is the choice of the type of strategies the players are allowed to play.
	On one hand, a solution to the \mbox{MFG} can be exploited in order to build approximate Nash equilibria for $N$-player games. See, e.g., \cite{carmonadelarue13,gomesetal13,huangetal06}.
	On the other hand, approximate $N$-player Nash equilibria can be shown to converge to solutions of the corresponding \mbox{MFG}, as $N\to \infty$.
	Cardaliaguet, Delarue, Lasry and Lions in \cite{CDLL15} gave an important contribution in this direction when the strategies are of closed loop type, exploiting the well-posedness of the so-called master equation, which implies uniqueness of \mbox{MFG} solutions. Later, Lacker in \cite{lacker18} was able to establish a general convergence result for  non-degenerate diffusions, which he subsequently extended to the common noise case in the joint article \cite{lackerleflem} with Le Flem.\medskip

	\emph{Correlated equilibria} were first introduced for many-player games by Robert Aumann, see \cite{aumann74,aumann87}.
	His idea can be summarised in the following way: A \emph{correlation device} or \emph{mediator} (he) picks a strategy profile according to some probability distribution which is common knowledge among the players.
	Then, according to the selected profile, he privately suggests a strategy to each player, meaning that each player only knows the recommendation provided to him by the mediator.
	A correlated equilibrium (\mbox{CE}, for short) is a probability distribution on the space of strategy profiles such that no player is willing to unilaterally deviate from the mediator's suggestion.
	We notice that, when the distribution used by the mediator to generate his recommendations has a product form, then CE reduces to the usual notion of Nash equilibrium in mixed strategies.
	Traffic lights in routing games provide an intuitive example of a mediator in everyday life, e.g. \cite[Section 13.1.4]{roughgarden16}. Other interpretations for such equilibria are available in the literature, we refer the interested reader to, e.g., \cite{barany92}.
	
 The notion of CE was originally introduced for static games with complete information and it rapidly led to a massive research activity in game theory as well as in economic theory along many directions.
	The survey \cite{forges12} provides a thorough analysis on several aspects of the more general notion of communication equilibrium within a wide range of games, such as stochastic games and games with incomplete information.
	In particular, for stochastic games we also refer to \cite{solan00,solan01,solanvieille02}. Many pleasant features of CE justify the scientific community interest towards it, for instance the fact that it may lead to higher payoffs than Nash equilibria, its lower computational complexity (see, e.g.,\cite{gilboazemel89}), and also that CE are reachable by a wide range of learning procedures (see \cite{hart05}).\smallskip
	
	CE in mean field games where first studied in \cite{CF2022}, where the authors established approximation and convergence results for a class of symmetric finite horizon games in \emph{restricted strategies}.
	After \cite{CF2022} two more papers on correlated equilibria in mean field games appeared, by Paul M\"uller and co-authors \cite{google21,google22}, whose setting is very close to ours. Indeed, they, too, consider discrete time games with finite state and action spaces. The mean field interaction is modeled in the $N$-player games via the empirical measure of players' states. Players' strategies depend only on the player's individual states in a Markovian fashion. We stress that their definition of correlated equilibrium is different from the one we give in \cite{CF2022}. In particular, it does not require any explicit consistency condition for the flow of measures, which is obtained as a consequence of their definition. Nonetheless the most recent paper \cite{google22} has an interesting discussion on how to pass from our definition in \cite{CF2022} to theirs and vice-versa. Lastly, big parts of those papers are devoted to more computational issues focusing on learning algorithms approximating the equilibria.\smallskip
	
	Here, we consider correlated equilibria for a simple class of symmetric finite horizon $N$-player games and their natural \mbox{MFG} counterpart as $N \to \infty$. In the $N$-player setting, the state variables evolve in discrete time, both state space and the set of control actions are finite. The mediator recommends restricted strategies to the players, that is, feedback strategies that depend only on time and the corresponding individual state variable. This is the same framework as in \cite{CF2022}. As opposed to that work, and also to \cite{google21,google22}, the deviating player is allowed to use (randomized) progressive strategies, that is, strategies that depend on the evolution of the entire system state  up to current time; see Remark~\ref{RemDefMFGStrategies} below. We stress that the possibility for the players to deviate by playing progressive strategies make the analysis and the proofs much more delicate than in \cite{CF2022}. Our main results can be summarized as follows:
\begin{itemize}
	
	\item[-] We extend the notion of correlated solution for a mean field game to allow for progressive deviations. Two formulations are presented, one based on closed-loop controls, the other on stochastic open-loop controls.
	
	\item[-] Starting from suitable correlated \mbox{MFG} solutions, we construct approximate $N$-player correlated equilibria that are robust against progressive deviations.
	
	\item[-] We provide an explicit example for a mean field game possessing correlated solutions against progressive deviations that have non-deterministic flows of measures and satisfy all conditions of the approximation result.
	
\end{itemize}

The rest of the paper is structured as follows. In Section~\ref{SectNotation}, we introduce the notation and state some preliminary definitions. In Section~\ref{SectNPlayer}, we describe the underlying $N$-player games and give the definition of (approximate) correlated equilibrium against progressive deviations. Section~\ref{SectMFG-closed} is dedicated to the corresponding mean field game. Correlated \mbox{MFG} solutions are first defined in feedback strategies with deviations that may directly depend on the possibly random flow of measures. In Section~\ref{SectMFG-open} we give an alternative definition of correlated \mbox{MFG} solution in stochastic open-loop strategies and establish an equivalence between the two formulations. Our main result is given in Section~\ref{SectApproximate}, where we show that suitable correlated \mbox{MFG} solutions yield approximate correlated solutions for the $N$-player game. An example of a correlated \mbox{MFG} with explicit solutions satisfying the assumptions of our approximation result is provided in Section~\ref{SectMFGExample}.  In Appendix~\ref{SectAppendix}, we collect some auxiliary results.


\section{Preliminaries and notation} \label{SectNotation}

We denote with $[\![m,M]\!]$ the set of natural numbers greater or equal to $m$ and lower or equal to $M$, namely we set $[\![m,M]\!]:= \{m, m+1, \dots, M-1,M\}$.
A given $(T+1)$-dimensional vector, $(x_0, \dots, x_T)$, will be denoted with $(x_t)_{t=0}^T$ or just by $x$ when its indices are clear from the context. Then, the $(t+1)$-dimensional vector of its first $t+1$ components is denoted with $x^{(t)}:=(x_0,x_1, \dots, x_t)$.
Similarly, for a $T$-dimensional vector, $(x_1, \dots, x_T)$, we introduce the notation $(x_t)_{t=1}^T$ (just $x$ when the context is clear), and the $t$-dimensional vector of its first $t$ components is denoted with $x^{(t)}:=(x_1, \dots, x_t)$.
Finally, let us fix a notation that is useful in the following. Let $(\Omega, \mathcal{F},\mathbb{P})$ be a complete probability space supporting  a $(\mathcal{X},\mathcal{B}(\mathcal{X}))$-valued random variable, $X$.

We consider the (discrete) time frame $[\![0,T]\!]$, with finite final time $T \in \mathbb{N}.$
The individual states  and the control actions lie in non-empty finite sets $\mathcal{X}$ and $\Gamma$, respectively. We  mostly deal with finite sets and the sets of probability measures on them. Throughout the whole paper these sets are equipped with the discrete metric and the metric $\text{dist}(\cdot,\cdot)$, respectively, making them \emph{Polish spaces}. 
The metric $\text{dist}(\cdot,\cdot)$ on the set $\mathcal{P}(E)$ of probability measures over a finite set $E$ is defined as follows. For $\mu, \tilde{\mu} \in \mathcal{P}(E)$, set
\[
	\text{dist}(\mu,\tilde{\mu}):= \frac{1}{2} \sum_{e \in E}|\mu(e)-\tilde{\mu}(e)|.
\]
Notice that this metric is compatible with the weak convergence topology and, for measures over finite sets, weak convergence is equivalent to the convergence in total variation.
The set $\mathcal{Z}=[0,1]$ is the space of noise states.
All the variables representing idiosyncratic noise are distributed according to $\nu,$  uniform distribution on  $\mathcal{Z}=[0,1]$.

The one-step individual state dynamics is given by the following system function:
\[
	\Psi \colon [\![0,T-1]\!] \times \mathcal{X} \times \mathcal{P(X)}\times \Gamma \times \mathcal{Z} \longrightarrow \mathcal{X}.
\]
The running costs are specified through a function:
\[
	f \colon [\![0,T-1]\!] \times \mathcal{X} \times \mathcal{P(X) } \times \Gamma \longrightarrow \mathbb{R}.
\]
The terminal costs are described by the following function:
\[
	F \colon  \mathcal{X} \times \mathcal{P(X) }  \longrightarrow \mathbb{R}.
\]
Consider the product space $ [\![0,T-1]\!]\times \mathcal{X}\times \mathcal{P}(\mathcal{X})^{T}$. We equip this space with the product topology with respect to the topologies defined on each space, that are, respectively, discrete topology for $ [\![0,T-1]\!]$ and $ \mathcal{X}$, since they are finite sets, and the topology of weak convergence for the space $\mathcal{P(X)}.$ Then, on the space $ [\![0,T-1]\!]\times \mathcal{X}\times \mathcal{P}(\mathcal{X})^{T}$, we consider the $\sigma$-algebra:
\[
	\begin{split}
		\mathcal{B}\big( [\![0,T-1]\!]\times \mathcal{X}\times \mathcal{P}(\mathcal{X})^{T} \big)
		& =\mathcal{B}(  [\![0,T-1]\!]) \otimes \mathcal{B}( \mathcal{X})\otimes \mathcal{B}(\mathcal{P}(\mathcal{X})^{T})\\&
		= 2^{[\![0,T-1]\!]} \otimes 2^{\mathcal X}\otimes \mathcal{B}( \mathcal{P(X)})^{T},
	\end{split}
\]
where $2^E$ denotes the power set of a finite set $E$. Notice that $\mathcal{B}( \mathcal{P(X)})$ is the Borel $\sigma$-algebra induced by the topology of weak convergence, that in our case, where the state space $\mathcal{X}$ is finite, coincides with the one induced by the metric $\text{dist}(\cdot,\cdot)$, on $\mathcal{P(X)}$.
On the finite set $\Gamma$ we consider the discrete topology and its Borel $\sigma$-algebra.

Let us define $\widehat{\mathcal{R}}$, the set of progressive feedback strategies:
\[
	\widehat{\mathcal{R}}:= \Big\{ \varphi: [\![0,T-1]\!]\times 	\mathcal{X}^T\times \mathcal{P}(\mathcal{X})^{T}\longrightarrow 	\Gamma,\quad \varphi\text{  progressively measurable}   \Big\}.
\]
As it is used several times in the following, we introduce another set of feedback strategies. It corresponds to the Markov strategies that depend only on the individual player's state, see \emph{restricted strategies} in \cite{CF2022}:
\[
	{\mathcal{R}}:= \Big\{ \varphi: [\![0,T-1]\!]\times \mathcal{X} \longrightarrow \Gamma \Big\}.
\]
This space is equipped, as all finite sets in this paper, with the discrete topology. Notice that we have the natural inclusion $\mathcal{R} \subset \UU$, and $\mathcal{R}$ is compact since it is finite.\\
Furthermore, for convenience of notation, for each $t \in [\![0,T-1]\!]$, we set
\[
	\begin{split}
		&\widehat{\mathcal{E}}_t
		:= \Big\{ \varphi:  \mathcal{X}^{t+1}\times \mathcal{P}(\mathcal{X})^{t+1} \longrightarrow \Gamma,\quad \varphi\text{  Borel-measurable}   \Big\},\\&
		\widehat{\mathcal{E}}^{(t)}
		:= \Big\{ \varphi:  [\![0,t]\!] \times \mathcal{X}^{t+1}\times \mathcal{P}(\mathcal{X})^{t+1} \longrightarrow \Gamma,\quad \varphi\text{ progressively measurable}   \Big\},
	\end{split}
\]
and the corresponding restricted quantities
\[
	\begin{split}
		&{\mathcal{E}}_t={\mathcal{E}}
		:= \Big\{ \varphi:  \mathcal{X}\longrightarrow \Gamma,\quad \varphi\text{ Borel-measurable}   \Big\},\\&
		{\mathcal{E}}^{(t)}={\mathcal{E}}^t
		:= \Big\{ \varphi: [\![0,t]\!] \times \mathcal{X} \longrightarrow \Gamma,\quad \varphi\text{  Borel-measurable}   \Big\}.	
	\end{split}
\]

When considering the $N$-player game, the set of progressively measurable feedback strategies corresponds to the following subset of $\widehat{\mathcal{R}}$
\[
	\widehat{\mathcal{R}}_N:= \Big\{ \varphi: [\![0,T-1]\!]\times 	\mathcal{X}^T\times {(\mathcal{M}_N^{\mathcal{X}})}^{T}\longrightarrow 	\Gamma,\quad \varphi\text{ progressively measurable}   \Big\},
\]
where  $\mathcal{M}_N^{\mathcal{X}}:= \{m \in \mathcal{P(X)}: \text{ for any } x \in \mathcal{X}, m(x)=\frac{k}{N}, k \in [\![0,N]\!]\}$ is the set of empirical measures of $N$-samples. Notice that the set $\widehat{\mathcal{R}}_N$ is finite. Indeed this is a consequence of the finiteness of $\mathcal{M}_N^{\mathcal{X}}$, whose cardinality is $\frac{(N+|\mathcal{X}|-1)!}{N!(|\mathcal{X}|-1)!}$. Thus, we endow this set with the discrete topology.
Analogously to what is done above, we set
\[
	\begin{split}
		&\widehat{\mathcal{E}}_{t,N}
		:= \Big\{ \varphi:  \mathcal{X}^{t+1}\times {(\mathcal{M}_N^{\mathcal{X}})}^{t+1} \longrightarrow \Gamma,\quad \varphi\text{  Borel-measurable}   \Big\},\\&
		\widehat{\mathcal{E}}^{(t)}_N
		:= \Big\{ \varphi: [\![0,t]\!] \times \mathcal{X}^{t+1}\times {(\mathcal{M}_N^{\mathcal{X}})}^{t+1} \longrightarrow \Gamma,\quad \varphi\text{ progressively measurable}   \Big\}.	
	\end{split}
\]
Finally, set
\begin{align}
	& \mathcal{D}:=\{ w: \mathcal{R} \to \mathcal{R} \},
	\qquad 	
	\widehat{\mathcal{D}}:=\{ w: \widehat{\mathcal{R}}
	 \to \widehat{\mathcal{R}}\},
\end{align}
which are respectively the sets of restricted and not strategies modifications. 
Notice again that the former set is clearly finite and the latter, when restricted to the $N$-player game, is finite and denoted by
\begin{align}
	& \widehat{\mathcal{D}}_N:=\{ w: \widehat{\mathcal{R}}_N \to \widehat{\mathcal{R}}_N\}.
\end{align}

In the following we make extensive use of the concepts of  regular conditional distribution and probability kernel, for which we refer to \cite{Kallenberg}.
For all $N \in \mathbb{N}$, we define the set of flows of kernels
\[
	\begin{split}
		\mathcal K_N:=\{ & \beta=(\beta_t)_{t=0}^{T-1}: \beta_t \text{ probability kernel from } (\widehat{\mathcal{R}}_N,\mathcal B(\widehat{\mathcal{R}}_N)) \text{ to } (\widehat{\mathcal{E}}_{t,N},\mathcal B(\widehat{\mathcal{E}}_{t,N})),\\&
\text{ for all } t \in [\![0,T]\!]\}.
	\end{split}
\]
We can provide a natural interpretation for a flow of kernels $\beta \in \mathcal K_N$ in our context. 
It represents some procedure through which players in the $N$-player game select their strategies. 
Indeed, a player receives a $\widehat{\mathcal{R}}_N$-valued suggestion from the mediator at the beginning of the game and then, at each time step $t \in [\![0,T-1]\!]$, determines his $\widehat{\mathcal{E}}_{t,N}$-valued strategy as a function of the suggestion received and an additional independent randomization factor (e.g. tossing a coin).

Finally, in the following, all $\sigma$-algebras and filtrations are assumed to be completed w.r.t. $\mathbb{P}$-null sets.


\section{The N-player game} \label{SectNPlayer}

Consider a fixed  number of players, $N \in \mathbb{N},$   and let $\mathfrak{m}^N \in \mathcal{P(X}^N)$ represent the initial distribution of the $N$-player system. 
For any probability distribution $\gamma \in \mathcal P(\widehat{\mathcal{R}}_N)$, we define the set $\mathcal N^N_\gamma$ as
	\begin{align}
		\mathcal N_\gamma^N:=\Big\{ &\widetilde \gamma \in  \mathcal P (\widehat{\mathcal{R}}_N\times \UU_N): 
			\widetilde \gamma(d\varphi,d\psi)=\beta_0(\varphi,d\psi_{0})\ldots\beta_{T-1}(\varphi,d\psi_{T-1})\gamma(d	\varphi),\\ \nonumber
			& \text{ for some }\beta=(\beta_t)_{t=0}^{T-1}\in \mathcal K_N \Big\}.
	\end{align}
	where, for all $t \in [\![0,T-1]\!]$, $\psi_t$ is the short form for $\psi(t,\cdot,\cdot)$.
	The elements of $\mathcal N_\gamma^N$ represent the joint distribution of the mediator's suggestion and players' strategy choices. In particular, if $\widetilde \gamma\in \mathcal N_\gamma^N$, then the first marginal of $\widetilde \gamma$ equals $\gamma$.

Finally, for a probability distribution $\gamma^N \in \mathcal{P}(\widehat{\mathcal{R}}^N_N)$, we denote its $i$-th marginal by
	\[
		\gamma^N_i(\cdot) := \gamma^N(\widehat{\mathcal{R}}_N \times \dots \times \cdot \times \dots \times \widehat{\mathcal{R}}_N),
	\]
	where $\cdot$ on the right-hand side above occupies the $i$-th coordinate.
	
\begin{definition}\label{Def_Real_N}
	We call \emph{correlated suggestion} any probability distribution $\gamma^N \in \mathcal{P}(\widehat{\mathcal{R}}^N_N)$.
	Then, consider a probability distribution $\widetilde \gamma \in \mathcal N^N_{\gamma^N_i}$
and call it a \emph{strategy modification} for the $i$-th player.
	Let $(\Omega_N,{ \mathcal{F}}_N,{\mathbb{P}}_N)$ be  a complete probability space carrying  
 $\mathcal{X}$-valued random variables $(X^{1,N}_t,\dots, X^{N,N}_t)_{t=0}^T$,
 $\widehat{\mathcal{R}}_N $-valued random variables $\Phi_1, \dots, \Phi_N, \widetilde \Phi_i$, and  $\mathcal{Z}$-valued random variables $(\xi_t^{1,N},\dots,\xi_t^{N,N})_{t=1}^{T}$ and $(\vartheta_t)_{t=0}^{T-1}$ such that the following properties hold:
	\begin{itemize}
		\item[\textbf{i)}] ${\mathbb{P}}_N\circ (X^{1,N}_0,\dots,X^{N,N}_0)^{-1}=\mathfrak{m}^N$;\\
 $ {\mathbb{P}}_N\circ (\Phi_1,\dots,\Phi_N)^{-1}=\gamma^N$; 

		\item[\textbf{ii)}] $(\xi_t^{1,N},\dots,\xi_t^{N,N})_{t=1}^{T}$ are i.i.d.\ all distributed according to $\nu$;

		\item[\textbf{iii)}] $(\vartheta_t)_{t=0}^{T-1}$ are i.i.d.\ all distributed according to $\nu$;

		\item[\textbf{iv)}] $(\xi_t^{1,N},\dots,\xi_t^{N,N})_{t=1}^{T}$, $(X^{j,N}_0)_{j=1}^{N}$,  $(\vartheta_t)_{t=0}^{T-1}$, and $(\Phi_j)_{j=1}^N$ are independent;
		 
		\item[\textbf{v)}] $\mathbb P \circ (\Phi_i, \widetilde \Phi_i)^{-1}=\widetilde \gamma$ and,
for any $t \in [\![0,T-1]\!]$, $\widetilde\Phi_i(t,\cdot,\cdot)$ is  $\sigma(\Phi_i, \vartheta_t)$-measurable;

		\item[\textbf{vi)}]
		for any \hbox{$t \in [\![0,T-1]\!],$}
		
			\begin{equation}
					\begin{split}
						&X^{i,N}_{t+1}= \Psi\left(t,X^{i,N}_t,\mu^{i,N}_t,\widetilde{\Phi}_i(t,X^{i,N},\mu^{i,N}),\xi^{i,N}_{t+1}\right), \\&
						X^{j,N}_{t+1}= \Psi\left(t,X^{j,N}_t,\mu^{j,N}_t, \Phi_j(t,X^{j,N},\mu^{j,N}),\xi^{j,N}_{t+1}\right),\quad  j \neq i, \qquad  \mathbb{P}_N\text{-a.s.},
					\end{split}
			\end{equation}
			where $\mu_t ^{l,N}$ denotes the empirical measure of all $N$ players' states but the $l$-th,\\
			 i.e. \hbox{$\mu^{l,N}_t:=\frac{1}{N-1}\sum_{j=1, j\neq l}^N \delta_{X^{j,N}_t}$}, and $\mu^{l,N}:=(\mu^{l,N}_t)_{t=0}^{T} \in \PX$.
	\end{itemize}
	Any tuple $((\Omega_N, \mathcal{F}_N,\mathbb{P}_N),(\Phi_j)_{j=1}^N, (\vartheta_t)_{t=0}^{T-1},(\xi^{1,N}_t,\dots,\xi^{N,N}_t)_{t=1}^{T},\widetilde \Phi_i,(X^{1,N}_t,\dots,X^{N,N}_t)_{t=0}^{T})$ satisfying the conditions above is called a \emph{ realization of the triple $(\mathfrak{m}^N, \gamma^N,\widetilde\gamma)$ for player $i \in [\![1,N]\!]$}. 
\end{definition}
The correlated suggestion $\gamma^N$ represents the known distribution, over the product set of the players' strategies, according to which the mediator gives his recommendations to the players, while $\widetilde \gamma$ represents the strategy modification for the deviating $i$-th player, encoded as the joint distribution of the suggestion received and the strategy he is actually taking into action. The fact that, for any $t \in [\![0,T-1]\!]$, $\widetilde\Phi_i(t,\cdot,\cdot)$ is $\sigma(\Phi_i, \vartheta_t)$-measurable yields that, at any time instant $t \in [\![0,T-1]\!]$, the $i^{th}$ player can exploit an (independent) randomization device to choose the strategy that he actually implements.

\begin{remark}
	Notice that, for any $w \in \widehat{\mathcal{D}}_N$, given a sequence of suggestions $(\Phi_j)_{j=1}^N$, $\widetilde \Phi_i= w(\Phi_i)$ satisfies assumption \textbf{\emph{v)}} in Definition \ref{Def_Real_N}, with $\sigma(\widetilde\Phi_i(t,\cdot, \cdot)) \subset \sigma (\Phi_i)$ and $\mathbb{P}\circ (\Phi_i, \widetilde \Phi_i) (d \varphi, d \psi) = \delta_{w(\varphi)}(d\psi)\gamma^N_i(d\varphi) $.
\end{remark}

\begin{remark}\label{IInd_cond}
	We make the following useful remarks concerning (conditional) independence properties of a realization.
	\begin{itemize}

		\item[i)] Notice that the following inclusion of $\sigma$-algebras holds $ \sigma (\widetilde\Phi_i ) \subseteq \sigma ((\vartheta_t)_{t=0}^{T-1}, \Phi_i)$, by definition.
			Indeed, we have
			\[
				\begin{split}
					\sigma(\widetilde \Phi_i)&
					=\sigma ((\widetilde \Phi_i(t,\cdot,\cdot))_{t=0}^{T-1})
					=\bigvee_{t \in [\![0,T-1]\!]}\sigma (\widetilde \Phi_i(t,\cdot,\cdot))
					\subseteq \bigvee_{t \in [\![0,T-1]\!]}\sigma ( \Phi_i,\vartheta_t)
					=\sigma( \Phi_i,( \vartheta_t)_{ t=0}^{T-1}),
				\end{split}
			\]
		The identities above hold since, for all $t \in [\![0,T-1]\!]$, $\widehat{\mathcal{E}}_{t,N}$ are equipped with discrete topology (making them Polish spaces)  so the Borel $\sigma$-algebra of the product space $\widehat{\mathcal{R}}_N$
		coincides with the product of the Borel $\sigma$-algebras of $\widehat{\mathcal{E}}_{t,N}$. Thus, the $\sigma$-algebra generated by a $\widehat{\mathcal{R}}_N$-valued r.v. coincides with the one generated by its components in $\widehat{\mathcal{E}}_{t,N}$.

		\item[ii)] Notice that the assumptions in Definition \ref{Def_Real_N}, in particular \textbf{iv)} and \textbf{v)}, imply that for a realization of $(\mathfrak m^N, \gamma^N,\widetilde \gamma)$ as above
		\[
			(\xi^{1,N}_t, \dots, \xi^{N,N}_t)_{t=1}^T, (X^{j,N}_0)_{ j =1}^N \text{ and }(\widetilde\Phi_i, (\Phi_j)_{j=1}^N)) \text{ are independent.}
		\]
		In fact, by \textbf{v)}, $\sigma (\widetilde\Phi_i, (\Phi_j)_{j=1}^N)) \subseteq \sigma (( \vartheta_t)_{ t=0}^{T-1}, (\Phi_j)_{j=1}^N)$ and the $\sigma$-algebras  $\sigma (( \vartheta_t)_{ t=0}^{T-1},$ $(\Phi_j)_{j=1}^N)$, $\sigma((\xi^{1,N}_t, \dots, \xi^{N,N}_t)_{t=1}^T)$ and $\sigma((X^{j,N}_0)_{ j =1}^N )$ are independent by \textbf{iv)}.

		\item[iii)]For a realization of $(\mathfrak m^N, \gamma^N,\widetilde \gamma)$, as above,
		\[
			(\Phi_j)_{j=1}^N \text{ and }\widetilde{\Phi}_i \text{ are conditionally independent given }\Phi_i.
		\]
		We  notice that $(\Phi_j)_{j}:=(\Phi_j)_{j=1}^N$ and $\vartheta := (\vartheta_t)_{t=0}^{T-1}$ are conditionally independent given $\Phi_i.$ 
		Indeed, given $A \in \mathcal B( \widehat{\mathcal{R}}_N^N), B \in \mathcal B (\mathcal Z^{T+1}),$ we have, $\mathbb{P}_N$-a.s.,
		\[
			\begin{split}
				\mathbb E_N [\mathbf I_A((\Phi_j)_{j}) \mathbf I_B(\vartheta) | \Phi_i]&
				=\mathbb E_N [\mathbb E_N [\mathbf I_A((\Phi_j)_{j}) \mathbf I_B(\vartheta) | (\Phi_1, \ldots, \Phi_N)] | \Phi_i]\\&
				=\mathbb E_N [\mathbf I_A((\Phi_j)_{j}) \mathbb E _N[ \mathbf I_B(\vartheta) | (\Phi_1, \ldots, \Phi_N)] | \Phi_i]\\&
				=\mathbb E_N [\mathbf I_A((\Phi_j)_{j}) \mathbb E_N [ \mathbf I_B(\vartheta) ] | \Phi_i]
				= \mathbb E_N [ \mathbf I_B(\vartheta) ]\mathbb E_N [\mathbf I_A((\Phi_j)_{j}) | \Phi_i]\\&
				= \mathbb E_N [ \mathbf I_B(\vartheta) | \Phi_i]\mathbb E_N [\mathbf I_A((\Phi_j)_{j}) | \Phi_i].
			\end{split}
		\]

		Then,  for arbitrary sets $A \in \mathcal{B}(\widehat{\mathcal{R}}_N^N),B \in \mathcal B (\UU_N)$, exploiting \textbf{iv)} and  \textbf{v)} and the conditional independence showed above, we see, $\mathbb{P}_N$-a.s.,
		\[
			\begin{split}
				\mathbb E_N [\mathbf I_A((\Phi_j)_{j}) \mathbf I_B(\widetilde \Phi_i) | \Phi_i]&
			=\mathbb E_N [\mathbb E_N [\mathbf I_A((\Phi_j)_{j}) \mathbf I_B(\widetilde \Phi_i) |\sigma( \vartheta,\Phi_i)] | \Phi_i]\\&
			=\mathbb E_N [\mathbf I_B(\widetilde \Phi_i)\mathbb E_N [\mathbf I_A((\Phi_j)_{j})  |\sigma( \vartheta,\Phi_i)] | \Phi_i]\\&
			=\mathbb E_N [\mathbf I_B(\widetilde \Phi_i)\mathbb E_N [\mathbf I_A((\Phi_j)_{j})  | \Phi_i] | \Phi_i]\\&
			=\mathbb E_N [\mathbf I_B(\widetilde \Phi_i)| \Phi_i]\mathbb E_N [\mathbf I_A((\Phi_j)_{j})  | \Phi_i] \\&
			=\mathbb E_N [\mathbf I_B(\widetilde \Phi_i)| \Phi_i]\mathbb E_N [\mathbf I_A((\Phi_j)_{j})  | \Phi_i].
			\end{split}
		\]
	\end{itemize}
\end{remark}

\begin{remark}\label{gam_tild}
	There is a strategy modification of particular interest for every correlated suggestion and every player.
	It reflects the case in which the player $i$, as all the other players, follows the suggestion he is given by the mediator.
	Exploiting the definition of realization of a certain triple, this corresponds to $\Phi_i=\widetilde{\Phi}_i,  \mathbb{P}_N$-a.s..
	In particular, let $\mathbb P_N \circ \Phi_i^{-1}=\gamma$, we have
	\[
		\begin{split}
		\mathbb{P}_N\circ(\Phi_i, \widetilde \Phi_i)^{-1}(d\varphi,d\psi)
		=\mathbb{P}_N\circ(\Phi_i, \Phi_i)^{-1}(d\varphi,d\psi)
		=\delta_{\phi}(d\psi)\gamma(d\varphi).
		\end{split}
	\]
	We denote this special strategy modification  with $\iota_\gamma \in \mathcal N_\gamma^N$ .\\
	Notice that property \textbf{v)} is obviously satisfied  in this case and, viceversa, for a realization of the triple $(m_0, \gamma^N, \iota_{\gamma^N_i})$, we have $\Phi_i=\widetilde \Phi_i,$ $\mathbb P_N$-a.s. and $\widetilde \Phi_i(t,\cdot,\cdot)$ is $\sigma(\Phi_i, \vartheta_t)$-measurable, for any $t \in [\![0,T-1]\!]$.
\end{remark}

The formalization of the concept of realization enables us to associate to the triple $(\mathfrak{m}^N, \gamma^N, \widetilde \gamma) \in \mathcal{P(X}^N)\times \mathcal{P}(\widehat{\mathcal{R}}_N^N)\times \mathcal{P}(\widehat{\mathcal{R}}_N\times \widehat{\mathcal{R}}_N)$  a cost functional  for player $i$, through the following expression:
\begin{equation}\label{def_J_i}
	J_i^N(\mathfrak{m}^N, \gamma^N, \widetilde\gamma ):= 	\mathbb{E}\left[ \sum_{t=0}^{T-1} f \left(t,X^{i,N}_t,\mu^{i,N}_t,	\widetilde{ \Phi}_i\left(t,X^{i,N},\mu^{i,N}\right)\right) +F\left(X^{i,N}_T,\mu^{i,N}_T\right) \right].
\end{equation}

By construction, the right-hand side of (\ref{def_J_i}) does not depend on the particular realization but only on $(\mathfrak{m}^N, \gamma^N,\widetilde \gamma )$. Indeed, $\widetilde \gamma \in \mathcal N^N_{\gamma^N_i}$ yields 
\[
	\widetilde \gamma(d\varphi,d\psi)=\beta^N_0(\varphi^{(0)},d\psi_0)\ldots\beta^N_T(\varphi^{(T)},d\psi_T)(\gamma^N_i)(d\varphi),
\]
for some $\beta^N=(\beta^N_t)_{t \in [\![0,T]\!]} \in \mathcal K_N$. Thus, the cost functional above is well-posed and we write
\[
	\begin{split}
		&J_i^N(\mathfrak{m}^N, \gamma^N, \widetilde \gamma )
		= \int_{\mathcal X^N} \int_{\mathcal Z^{NT}} 		\int_{\widehat{\mathcal{R}}_N^N}
		\int_{\widehat{\mathcal{E}}_{0,N}}\ldots \int_{\widehat{\mathcal{E}}_{T,N}}
		G^N(x_1,\ldots,x_N,\varphi_0,\ldots,\varphi_{T-1}, u_1,	\ldots, u_N,z_1,\ldots,z_{NT})\\
		& \qquad \beta^N_{T}(u_i,d\varphi_{T})\cdots\beta^N_0(u_i,d\varphi_{0}) \gamma^N(du_1,\ldots,du_N)\nu^{\otimes NT}		(dz_1,\ldots,dz_{NT})m_0^{\otimes N}(dx_1,\ldots,dx_{N}),
	\end{split}
\] 
for some measurable function $G^N: \mathcal X^N \times \widehat{\mathcal{E}}_{0, N} \times \ldots \times \widehat{\mathcal{E}}_{T-1, N} \times   \widehat{\mathcal{R}}_N^N\times \mathcal Z^{NT} \to \mathbb R.$
\\

Since, for each $i  \in [\![1,N]\!]$, the functional $J_i^N(\cdot)$ represents the costs that  player $i$ faces, his aim is to minimize it. As natural when dealing with several players, we deal with an equilibrium concept for optimality.
\begin{definition} \label{DefPrelimitCE}
	Let $\varepsilon \geq 0$. We call a distribution $\gamma^N \in  \mathcal{P}(\widehat{\mathcal{R}}_N^N)$ an \emph{$\varepsilon$-correlated equilibrium } with initial distribution $\mathfrak{m}^N \in \mathcal{P(X}^N)$ if we have
	\[
		J_i^N(\mathfrak{m}^N, \gamma^N, \iota_{\gamma^N_i}) \leq J_i^N(\mathfrak{m}^N,\gamma^N, \widetilde \gamma)+\varepsilon,
	\]
	for every player $i \in [\![1,N]\!]$ and every strategy modification  $\widetilde \gamma \in \mathcal N_{\gamma^N_i}$.\\
	In particular, we call $\gamma^N $ a \emph{correlated equilibrium}, denoted by CE, if $\varepsilon=0.$
\end{definition}

Definition~\ref{DefPrelimitCE} is in line with the notion of correlated equilibrium present in the literature. We stress that, here, the deviating player has access to the entire history of the system and, in addition, is allowed to use a randomization device.

\section{The mean field game}\label{SectMFG-closed}

Let $\mathfrak{m}_0 \in \mathcal{P(X)}$ be the initial distribution of our mean field system. 
In this model there is only one representative player in the mean field game because of the symmetry in the $N$-player game.

\begin{definition}\label{mf_real}
	Let $\rho \in \mathcal{P}\big(\mathcal{R}\times  \mathcal{P(X)}^{T+1}\big)$ and call it a \emph{correlated suggestion}.
	Call \emph{strategy modification} a function $w \in \widehat{\mathcal{D}}$.
	Then, let $( \Omega,{ \mathcal{F}},{\mathbb{P}})$ be  a  probability space supporting  $\mathcal{X}$-valued process $(X_t)_{t=0}^T$, an  $\mathcal{R}$-valued random variable $\Phi$, a $\PX$-valued random variable $\mu$ and  $\mathcal{Z}$-valued random variables $(\xi_t)_{t=1}^T$,  such that the following properties hold:
	\begin{itemize}
		\item[\textbf{i)}] ${\mathbb{P}}\circ X_0^{-1}=\mathfrak{m}_0$;

		\item[\textbf{ii)}] ${\mathbb{P}}\circ (\Phi,(\mu_t)_{t=0}^{T})^{-1}=\rho$;

		\item[\textbf{iii)}] $(\xi_t)_{t=1}^T$ are i.i.d.\ all distributed according to $\nu;$
	
		\item[\textbf{iv)}] $(\xi_t)_{t=1}^T$, $X_0$ and $(\Phi,  (\mu_t)_{t=0}^T)$ are independent;

		\item[\textbf{v)}] the evolution of $(X_t)_{t=0}^T$ follows this dynamics: for any $t \in [\![0,T-1]\!]$,	
		\begin{equation} \label{EqMFGDynamics}
			\begin{split}
				&X_{t+1}=\Psi\left(t,X_t,\mu_t, w\circ\Phi(t,X,\mu),\xi_{t+1}\right), \qquad \mathbb{P}\text{-a.s.}.
			\end{split}
		\end{equation}
	\end{itemize}
	We call any tuple $\big((\Omega, \mathcal{F},\mathbb{P}),\Phi,(\mu_t)_{t=0}^T,X_0,(\xi_t)_{t=1}^T, w, (X_t)_{t=0}^T \big)$ satisfying the conditions above a \emph{realization of the triple $(\mathfrak{m}_0, \rho, \widetilde \rho)$}.
\end{definition}

The strategy modification $w$ represents how the representative player decides to deviate from the suggestion he was given.
Notice that, contrary to the $N$-player game where the $i^{th}$ player can exploit a randomization device when selecting the strategy to put in action, the choice here is a deterministic functional of the suggestion, $\Phi$, provided by the mediator.

\begin{remark}
	As for the $N$-player game, we can characterize the form of a realization for  the case in which the representative player follows the suggestion provided to him.\\
	This is the case when the function $w$ is just the identity. Indeed, we have $w\circ\phi(t,x^{(t)},m^{(t)})=\phi(t,x_t)$, for each $t \in [\![0,T-1]\!]$ and $\phi \in \mathcal{R}$. We call this special modification $\iota$.
\end{remark}

The player in the mean field game  faces costs associated to the triple $(\mathfrak{m}_0, \rho, w ) \in \mathcal{P(X)}\times  \mathcal{P}\big(\mathcal{R}  \times  \mathcal{P(X)}^{T+1}\big) \times \widehat{\mathcal{D}}$ that are given by
\begin{equation}
	J(\mathfrak{m}_0, \rho, w)
	:= \mathbb{E}\left[ \sum_{t=0}^{T-1}f\left(t,X_{t},\mu_t,w \circ\Phi(t,X^{(t)},\mu^{(t)})\right) +F\left(X_{T},\mu_T\right) \right].
\end{equation}

As noticed for the $N$-player game, we highlight that the cost functional above is well defined  since the right-hand side does not depend on the realization considered but only on $(\mathfrak{m}_0, \rho, w)$, and we may write
\begin{align}\label{Eq_rewrite_J_functional}
	&J(\mathfrak{m}_0, \rho, w)
		= \int_{\mathcal X} \int_{\mathcal Z^{T}} 	\int_{\mathcal{R}\times \mathcal P(\mathcal X)^{T+1}} G_w(x,\phi,z,m) \rho(d\phi,dm)\nu^{\otimes T}(dz )m_0(dx),
\end{align}
for some function $G_w: \mathcal X   \times {\mathcal{R}}\times \mathcal Z^{T}\times \PX \to \mathbb R.$

\begin{definition}\label{Def_CMFG_closed}
	We say that $\rho \in  \mathcal{P}\big(\mathcal{R} \times \mathcal{ P(X)}^{T+1}\big)$ is a \emph{correlated solution for the mean field game} with initial distribution $\mathfrak{m}_0 \in \mathcal{P(X)}$, if the following two conditions hold:
	\begin{itemize}
		\item[\textbf{(Opt)}] For each  strategy modification  $w \in  \widehat{\mathcal{D}} $,
\[ J(\mathfrak{m}_0,{\rho}, \iota) \leq J(\mathfrak{m}_0, \rho, w).\]

		\item[\textbf{(Con)}] For any realization of  $(\mathfrak{m}_0,  {\rho}, \iota)$, namely $\big((\Omega,\mathcal{F},\mathbb{P}\big),\Phi,(\mu_t)_{t=0}^T, X_0, (\xi_t)_{t=1}^{T},\iota, (X_t)_{t=0}^T )$, setting $\mathcal{F}^\mu:=\sigma\big((\mu_t)_{t=0}^T\big)$, we have
		\[
\mu_t(\cdot)=\mathbb{P}(X_{t}\in \cdot \text{ } | \mathcal{F}^\mu ), \quad t \in [\![0,T]\!].
		\]
	\end{itemize}
\end{definition}

The first condition above is called \emph{optimality condition}, the second is called \emph{consistency condition}.

\begin{remark} \label{RemDefMFGStrategies}
A correlated solution according to Definition~\ref{Def_CMFG_closed} is an element of $\mathcal{P}\big(\mathcal{R} \times \mathcal{ P(X)}^{T+1}\big)$. The mediator thus suggests to play strategies that depend only on time and the representative player's current state (Markov open-loop or \emph{restricted strategies} as in \cite{CF2022}). By the optimality condition, following the mediator's recommendations in those restricted strategies has to be optimal against progressive deviations, that is, strategies that may depend on the entire history of state and flow of measures up to current time. More precisely, if the representative player decides to deviate, then she chooses a strategy modification $w$ (not equal to the identity on $\mathcal{R}$) that takes a (restricted) strategy recommended by the mediator and transforms it into a progressive feedback strategy, which is then applied to generate the state dynamics; see Eq.~\eqref{EqMFGDynamics}.
\end{remark}

\begin{remark} 
In the consistency condition of Definition~\ref{Def_CMFG_closed}, we take conditional distribution with respect to $\mathcal{F}^\mu$, the $\sigma$-algebra generated by the entire flow of measures $\mu$ (up to terminal time $T$). This implies the generally weaker condition
		\begin{equation} \label{EqWeakConsist}
	\mu_t(\cdot)=\mathbb{P}(X_{t}\in \cdot \text{ } | \mathcal{F}^\mu_t ), \quad t \in [\![0,T]\!],
		\end{equation}
where $\mathcal{F}^\mu_t:=\sigma\big((\mu_s)_{s=0}^t\big)$ is the $\sigma$-algebra generated by the flow of measures $\mu$ up to time $t$. The intuition behind conditioning on the entire flow of measures is the following. In choosing a correlated equilibrium, the mediator wants to induce a certain behavior of the population. That behavior is represented by the flow of measures $\mu$. In equilibrium, the representative player accepts the mediator's recommendations. But those recommendations are potentially correlated with the flow of measures up to terminal time. As a consequence, the player's state $X_{t}$ at any intermediate time $t$ can be correlated with the flow of measures $\mu$ also at future times. In order to reproduce the population behavior given by $\mu$, the representative player's state must therefore satisfy the consistency condition according to \emph{\textbf{(Con)}}, not just \eqref{EqWeakConsist}. For further discussion also see Remark~4.2 in \cite{CF2022}.
\end{remark}


\section{The mean field game in open-loop strategies}\label{SectMFG-open}


Now, we formalize an alternative structure for the mean field game, extending the class of admissible control policies.
We then prove in Section \ref{Subsect_val_of_MFG} that, under a mild assumption on the form of the correlated solution $\rho$, the value of the MFG remains the same.

\subsection{The definition of the MFG in open-loop strategies}

Let $\mathfrak{m}_0 \in \mathcal{P(X)}$ be the initial distribution of the mean field system. 

\begin{definition}\label{mf_real_relaxed}
	Let $\rho \in \mathcal{P}\big(\mathcal{R}\times  \mathcal{P(X)}^{T+1}\big)$. 
	A tuple $((\Omega, \mathcal{F} ,\mathbb{P}),$ $\{\mathcal{G}_t\}_{t=0}^{T-1}$, $\Phi, (\mu_t)_{t=0}^T, X_0, (\xi_t)_{t=1}^T, (u_t)_{t=0}^{T-1})$ is said to be an \emph{open-loop control policy (open-loop strategy)} if  $( \Omega, \mathcal{F}, {\mathbb{P}})$ is a complete probability space supporting  $\mathcal{X}$-valued random variables $X_{t}, t \in [\![0,T]\!],$ an  $ \mathcal{R}$-valued random variable $\Phi$, a $\PX$-valued random variable $\mu$, $\mathcal{Z}$-valued random variables $(\xi_t)_{t=1}^T$ and  $\Gamma$-valued random variables $u_t, t \in [\![0,T-1]\!]$, and $\{\mathcal{G}_t\}_{t=0}^{T-1}$ is a complete filtration such that
	\begin{itemize}
		\item[\textbf{i)}] ${\mathbb{P}}\circ (X_0)^{-1}=\mathfrak{m}_0$;

		\item[\textbf{ii)}] ${\mathbb{P}}\circ (\Phi,(\mu_t)_{t=0}^T)^{-1}=\rho$;

		\item[\textbf{iii)}] $(\xi_t)_{t=1}^T$ are i.i.d. all distributed according $\nu;$
	
		\item[\textbf{iv)}] $(\xi_t)_{t=1}^T$, $X_0$ and $(\Phi,  (\mu_t)_{t=0}^T)$ are independent;
		
		\item[\textbf{iv')}] for each $t \in [\![0,T-1]\!]$:
		\begin{itemize}
			\item $\xi_t$ is  $\mathcal{G}_t$-measurable and $\xi_{t+k}$, $k=1,\dots,T-t$, are jointly independent of $\mathcal{G}_t$,
			
			\item $\mathcal{G}_t=\mathcal{H}_t \lor \sigma(\mu^{(t)})\lor \sigma(\Phi)\lor \sigma(X_0)$, with  $\mathcal{H}_t$ independent of $\sigma(\Phi,(\mu_t)_{t=0}^T ,X_0)$,
			
			\item $u_t$ is $\mathcal{G}_t$-measurable;
		\end{itemize}		 
		\item[\textbf{v)}] 
		for all $t \in [\![0,T-1]\!]$,	
		\begin{equation}
			\begin{split}
				&X_{t+1}=\Psi\left(t,X_{t},\mu_t, u_t,\xi_{t+1}\right), \qquad \mathbb{P}\text{-a.s.}.
			\end{split}
		\end{equation}
	\end{itemize}
	
	We denote with $\mathcal{A}$ the set of all open-loop control policies and, with a slight abuse of notation, in the following we write just $(u_t)_{t=0}^{T-1} \in \mathcal{A}$ for $((\Omega, \mathcal{F},\mathbb{P}),$ $\{\mathcal{G}_t\}_{t=0}^{T-1}$, $\Phi,(\mu_t)_{t=0}^T,X_0,(\xi_t)_{t=1}^T, (u_t)_{t=0}^{T-1})\in \mathcal A$.
	\\
	We call any tuple $\big((\Omega, \mathcal{F},\mathbb{P}),\{\mathcal{G}_t\}_{t=0}^{T-1},\Phi, (\mu_t)_{t=0}^T, X_0,(\xi_t)_{t=1}^T, (u_t)_{t=0}^{T-1} , (X_t)_{t=0}^T \big)$ as above a \emph{ realization of the triple $(\mathfrak{m}_0, \rho, (u_t)_{t=0}^{T-1})$}.
\end{definition}

\begin{remark}
	Notice that this new setting includes the previous one. Indeed, setting $u_t=w\circ\Phi(t,X^{(t)},\mu^{(t)})$, $t \in [\![0,T-1]\!]$, the recursive structure of the problem yields that $u_t$ is $\mathcal{G}_t$-measurable with $\mathcal{G}_t= \sigma(X_0)\lor\sigma(\Phi)\lor \sigma(\mu^{(t)})\lor \sigma (\xi^{(t)})$, that is $\mathcal{H}_t= {\sigma}(\xi^{(t)})$, and thus all the conditions in \textbf{iv')} hold.
	In particular, the closed-loop strategy $\iota$, corresponding to the case in which the $i^{th}$-player follows the mediator's suggestion induces the open-loop admissible strategy 
	\begin{equation}
		u^{\iota}_t
		:=\iota \circ \Phi(t, X^{(t)},\mu^{(t)})
		=\Phi(t,X_t), \qquad \Phi \in \mathcal{R}.
	\end{equation}
\end{remark}

In this case the costs associated to the triple $(\mathfrak{m}_0, \rho, (u_t)_{t=0}^{T-1} ) \in \mathcal{P(X)}\times  \mathcal{P}\big(\mathcal{R}  \times  \mathcal{P(X)}^{T+1}\big) \times \mathcal A$ are given by
\begin{equation}
	\widehat J(\mathfrak{m}_0, \rho, (u_t)_{t=0}^{T-1})
	:= \mathbb{E}\left[ \sum_{t=0}^{T-1}f\left(t,X_{t},\mu_t,u_t\right) +F\left(X_T,\mu_T\right) \right].
\end{equation}
In this definition of the costs, there is a little abuse of notation. Indeed, $(u_t)_{t=0}^{T-1} \in \mathcal{A}$ stands for $((\Omega, \mathcal{F},\mathbb{P}),$ $\{\mathcal{G}_t\}_{t=0}^{T-1}$, $\Phi,(\mu_t)_{t=0}^T,X_0,(\xi_t)_{t=1}^T, (u_t)_{t=0}^{T-1}) \in \mathcal{A}$.

\begin{definition}\label{Def_CMFG_open}
	We say that $\rho \in  \mathcal{P}\big(\mathcal{R} \times \mathcal{ P(X)}^{T+1}\big)$ is an \emph{open-loop correlated solution for the mean field game} with initial distribution $\mathfrak{m}_0 \in \mathcal{P(X)}$, if the following two conditions hold:
	\begin{itemize}
		\item[\textbf{(Opt)}] For each  strategy modification  $(u_t)_{t=0}^{T-1} \in \mathcal A $,
\[ \widehat J(\mathfrak{m}_0, \rho, (u^\iota_t)_{t=0}^{T-1}) \leq \widehat J(\mathfrak{m}_0, \rho, (u_t)_{t=0}^{T-1}).\]

		\item[\textbf{(Con)}] For any realization of  $(\mathfrak{m}_0,  {\rho}, (u^\iota_t)_{t=0}^{T-1})$, namely $((\Omega, \mathcal{F},\{\mathcal{G}_t\}_{t=0}^{T-1},\mathbb{P})$, $\Phi,(\mu_t)_{t=0}^T$,  $X_0,(\xi_t)_{t=1}^T$, $(u^\iota_t)_{t=0}^{T-1}$, $(X_t)_{t=0}^{T})$, setting $\mathcal{F}^\mu:=\sigma\big((\mu_t)_{t=0}^T\big)$, we have	
		\[
\mu_t(\cdot)=\mathbb{P}(X_{t}\in \cdot \text{ } | \mathcal{F}^\mu ), \quad t \in [\![0,T]\!].
		\]
	\end{itemize}
\end{definition}


\subsection{The optimal value of the objective functional in the MFG}\label{Subsect_val_of_MFG}
This section is devoted to proving that the value of the objective functional at equilibrium in the limit game remains the same if we enlarge the set of admissible strategies to include open-loop controls with the information structure given in Definition~\ref{mf_real_relaxed}.

We start by showing that, under suitable technical assumptions needed to guarantee the well-posedness of all the conditional expectations involved, a conditional \emph{Dynamic Programming Principle} holds for MFG solutions in the sense of Definition~\ref{Def_CMFG_closed}. Then, we prove by backward induction in time that the value of the MFG in closed-loop strategies is the same as the one in open-loop strategies and that, therefore, a closed-loop solution according to Definition~\ref{Def_CMFG_closed} is also an open-loop solution according to Definition~\ref{Def_CMFG_open}.
	
Our first assumption requires the state dynamics to be non-degenerate; more precisely:
	\begin{hypenv}
	
		\item \label{HypPsiTransitions} For any $t \in [\![0,T-1]\!]$, any $m \in \mathcal{P(X)}$, any $x,y \in \mathcal{X}$ and any $u \in \Gamma$,
			\begin{align*}
				\mathbb{P}(\Psi(t,x,m,u,Z)=y)>0,
			\end{align*}
			where $Z$ is a random variable with distribution $\nu$.
	\end{hypenv}
	
In addition, we make a finiteness assumption on the structure of the correlated solution. To this end, let $\rho$ be a solution of the MFG starting at $m_0$ according to Definition~\ref{Def_CMFG_closed}. Consider a realization $\big((\Omega, \mathcal{F},\mathbb{P}),\Phi,(\mu_t)_{t=0}^T,X_0,(\xi_t)_{t=1}^T, w, (X_t)_{t=0}^T \big)$ of $(m_0, \rho, w)$ according to Definition~\ref{mf_real}. Given the fact that $\mathcal{R}$ is finite and limiting our analysis to the functions $\varphi \in \mathcal{R}$ such that $\mathbb{P}(\Phi=\varphi)> 0$, the induced conditional probability $\mathbb{P}_{\varphi}(\cdot):=\mathbb{P}(\cdot|\Phi=\varphi)$ is well-defined. The finiteness assumption on $\rho$ is now:

	\begin{Hypenv}
		\item \label{HypSolutionFlows} If $(\Phi,(\mu_t)_{t=0}^T)$ is distributed according to $\rho$, then there exists, for any choice of $\varphi \in \mathcal{R}$ such that $\mathbb{P}(\Phi=\varphi)>0$, a subset $\mathcal{P}_\varphi \subset \mathcal{P(X)}^{T+1}$ of finite cardinality such that $\mathbb{P}_\varphi(\mu^{(T)} \in \mathcal{P}_\varphi)=1$ and, for any $m \in  \mathcal{P}_\varphi$, $\mathbb{P}_\varphi(\mu^{(T)} = m )>0$. 	 
	\end{Hypenv}
		
	\begin{remark}
		The assumptions above are used to ensure the well-posedness of conditional probabilities of the form $\mathbb{P}_\varphi(\cdot|\mu^{(t)} = m^{(t)} , X^{(t)}=x^{(t)})$, for any $m^{(t)} \in \mathcal{P}_\varphi^{(t)}$, any $x^{(t)}\in \mathcal{X}^{t+1}$, where $\mathcal{P}_\varphi^{(t)}:= \pi_{\mathcal{P(X)}^{t+1}}(\mathcal{P}_\varphi)= \{ m \in \mathcal{P(X)}^{t+1} \text{ s.t.  there exists }l \in \mathcal{P(X)}^{T-t} \text{ s.t. }(m,l)\in \mathcal{P}_\varphi\} $. Indeed, for this to hold it is enough to check that $\mathbb{P}_\varphi(\mu^{(T)}=m, X^{(T)}=x^{(T)}) >0$, for any $x^{(T)} \in \mathcal{X}^{T+1}$ and any $m \in  \mathcal{P}_\varphi$. First, exploiting  disintegration we write
		\begin{align}\label{Eq_wp_cond_p_1}
			\mathbb{P}_\varphi(\mu^{(T)} =m, X^{(T)}=x^{(T)})
			= \mathbb{P}_\varphi(X^{(T)}=x^{(T)} | \mu^{(T)}=m) \cdot \mathbb{P}_\varphi(\mu^{(T)}=m),
		\end{align}
		where the second term in the product on the right is clearly strictly positive by Assumption \Hypref{HypSolutionFlows}.
		Then, another round of disintegration yields
		\begin{align}\label{Eq_wp_cond_p_2}
			\mathbb{P}_\varphi& (X^{(T)}=x^{(T)} | \mu^{(T)} =m) 
			\nonumber \\
			& = \mathbb{P}_\varphi(X_0=x_0 | \mu^{(T)} =m) \prod_{t=0}^{T-1}\mathbb{P}_\varphi(X_{t+1}=x_{t+1} | \mu^{(T)} =m ,X^{(t)}=x^{(t)})\\ \nonumber
			& = m_0(\{x_{0}\}) \prod_{t=0}^{T-1}\mathbb{P}_\varphi(X_{t+1}=x_{t+1} | \mu^{(T)} =m ,X^{(t)}=x^{(t)}).
		\end{align}
		 Now, exploiting the iterative dynamics of the state in the game, we have that, for any fixed $t \in [\![0,T-1]\!]$,
		\begin{align}\label{Eq_wp_cond_p_3}
			& \mathbb{P}_\varphi (X_{t+1}=x_{t+1} | \mu^{(T)} =m, X^{(t)}=x^{(t)})\nonumber \\
			& = \mathbb{P}_\varphi(\Psi(t,x_{t},m_t,u_t,\xi_{t+1})=x_{t+1} | \mu^{(T)} =m, X^{(t)}=x^{(t)})\\ \nonumber
			& = \sum_{\gamma \in \Gamma}\mathbb{P}_\varphi(\Psi(t,x_{t},m_t,\gamma,\xi_{t+1})=x_{t+1})\mathbb{P}_\varphi(u_t=\gamma | \mu^{(T)}  =m, X^{(t)}=x^{(t)}) 
			>0.
		\end{align}		 
		Hence, putting together Equations \eqref{Eq_wp_cond_p_1}, \eqref{Eq_wp_cond_p_2} and \eqref{Eq_wp_cond_p_3}, we get 
		\begin{align}
			\mathbb{P}_\varphi(\mu^{(T)}  =m, X^{(T)}=x^{(T)})
			\geq \mathbb{P}_\varphi(\mu^{(T)} =m) m_0(\{x_{0}\}) \prod_{t=0}^{T-1}\mathbb{P}_\varphi(X_{t+1}=x_{t+1} | \mu^{(T)} =m ,X^{(t)}=x^{(t)}) 
			>0.
		\end{align}
		Finally notice that the very same proof can be carried out replacing $u_t$ with $w\circ\varphi(t,x^{(t)},m^{(t)})$ and so the result holds, in particular, for the MFG in Definition \ref{mf_real}.
	\end{remark}
Let $\rho$ be a solution of the MFG starting at $m_0$ and satisfying \Hypref{HypSolutionFlows}. Consider a realization $\big((\Omega, \mathcal{F},\mathbb{P}),\Phi,(\mu_t)_{t=0}^T,X_0,(\xi_t)_{t=1}^T, w, (X_t)_{t=0}^T \big)$ of $(m_0, \rho, w)$. Set $\rho_2(\cdot|\varphi) = \mathbb{P}(\mu \in \cdot| \Phi=\varphi)$. Such a realization then has the following properties, conditionally on the event $\{\Phi=\varphi\}$:
	\begin{itemize}
		\item[\textbf{i)}$_\varphi$] $\mathbb{P}_{\varphi}\circ (X_0)^{-1}=\mathfrak{m}_0$;
	
		\item[\textbf{ii)$_\varphi$}] $\mathbb{P}_{\varphi}\circ (\Phi,(\mu_t)_{t=0}^T)^{-1}=\mathbb{P}_{\varphi}\circ (\varphi,(\mu_t)_{t=0}^T)^{-1}=\delta_{\varphi}\otimes \rho_2(\cdot|\varphi)$;
	
		\item[\textbf{iii)$_\varphi$}] $(\xi_t)_{t=1}^T$ are i.i.d.\ all distributed according to $\mathbb{P}_{\varphi}\cdot(\xi_t)^{-1}=\nu$;
	
		\item[\textbf{iv)$_\varphi$}] $(\xi_t)_{t=1}^T$, $X_0$ and $(\mu_t)_{t=0}^T$ are independent w.r.t.\ $\mathbb{P}_{\varphi}$;
	
		\item[\textbf{v)$_\varphi$}] 
		for all $t \in [\![0,T-1]\!],$
		\begin{equation}
			\begin{split}
				&X_{t+1}=\Psi\left(t,X_{t},\mu_t,w\circ\varphi(t,X^{(t)},\mu^{(t)}),\xi_{t+1}\right), \qquad \mathbb{P}_{\varphi}\text{-a.s.}.
			\end{split}
		\end{equation}
	\end{itemize}
	Notice that properties \textbf{i)$_\varphi$, ii)$_\varphi$, iii)$_\varphi$, iv)$_\varphi$ } and \textbf{v)$_\varphi$} are a consequence of the corresponding properties in the unconditional setting and the independence in property \textbf{iv)}.
		
	Hence, the (conditional) costs associated to the triple $(\mathfrak{m}_0, \rho, w ) \in \mathcal{P(X)}\times  \mathcal{P}\big(\mathcal{R}  \times  \mathcal{P(X)}^{T+1}\big) \times \widehat{\mathcal{D}}$ are
	\begin{equation}
		J_{\varphi}(\mathfrak{m}_0, \rho, w)
		:= \mathbb{E}_{\varphi}\left[ \sum_{t=0}^{T-1}f\left(t,X_{t},\mu_t,w\circ\varphi(t,X^{(t)},\mu^{(t)})\right) +F\left(X_T,\mu_T\right) \right],
	\end{equation}
	where $\mathbb{E}_{\varphi}[\cdot]:=\mathbb{E}[\cdot|\Phi=\varphi]$.
	\\

	We set
	\begin{align*}
		& J_{\varphi}(t,x^{(t)},m^{(t)},w)=\mathbb{E}_{\varphi}\left[\sum_{s=t}^{T-1}f(s,X_s,\mu_s,w\circ\varphi(s,X^{(s)},\mu^{(s)}))+ F(X_T,\mu_T)| X^{(t)}=x^{(t)}, \mu^{(t)}=m^{(t)}\right].
	\end{align*}
	and thus, in particular,
	\begin{align*}
		& J_{\varphi}(T,x^{(T)},m^{(T)},w)=F(x_T,m_T).
	\end{align*}
	
	Notice that, for any fixed $t \in [\![0,T-1]\!]$, $X^{(t)}$ and $(\mu_s)_{s=t+1}^T$ are $\mathbb{P}_\varphi$-conditionally independent given $\mu^{(t)}$. 
	Indeed, consider a fixed $t \in [\![0,T-1]\!]$ and let $x^{(t)} \in \mathcal{X}^{t+1}$ and $m^{[t+1]}:=(m_s)_{s=t+1}^T \in \mathcal{P}_\varphi^{[T-t]}$, with $\mathcal{P}_\varphi^{[T-t]}:= \pi_{\mathcal{P(X)}^{T-t}}(\mathcal{P}_\varphi)= \{ m \in \mathcal{P(X)}^{T-t} \text{ s.t.  there exists }l \in \mathcal{P(X)}^{t+1} \text{ s.t. }(l,m)\in \mathcal{P}_\varphi\} $. 
	Exploiting tower property and measurability, we have 
	\begin{align*}
		\mathbb{P}_{\varphi}  (X^{(t)}=x^{(t)}, (\mu_s)_{s=t+1}^T=m^{[t+1]}|\mu^{(t)})\nonumber &
		= \mathbb{E}_{\varphi}[\mathbb{E}_{\varphi}[\mathbf{1}_{\{x^{(t)}\}}(X^{(t)})\mathbf{1}_{\{m^{[t+1]}\}}((\mu_s)_{s=t+1}^T)|X_0,\mu^{(t)}, \xi^{(t)}]|\mu^{(t)}]\\
		& = \mathbb{E}_{\varphi}[\mathbf{1}_{\{x^{(t)}\}}(X^{(t)})\mathbb{E}_{\varphi}[\mathbf{1}_{\{m^{[t+1]}\}}((\mu_s)_{s=t+1}^T)|X_0,\mu^{(t)}, \xi^{(t)}]|\mu^{(t)}]\\
		& = \mathbb{E}_{\varphi}[\mathbf{1}_{\{x^{(t)}\}}(X^{(t)})\mathbb{E}_{\varphi}[\mathbf{1}_{\{m^{[t+1]}\}}((\mu_s)_{s=t+1}^T)|\mu^{(t)}]|\mu^{(t)}]\\
		& = \mathbb{E}_{\varphi}[\mathbf{1}_{\{m^{[t+1]}\}}((\mu_s)_{s=t+1}^T)|\mu^{(t)}]\mathbb{E}_{\varphi}[\mathbf{1}_{\{x^{(t)}\}}(X^{(t)})|\mu^{(t)}]\\
		& = \mathbb{P}_{\varphi}(X^{(t)}=x^{(t)}|\mu^{(t)})\mathbb{P}_\varphi((\mu_s)_{s=t+1}^T=m^{[t+1]}|\mu^{(t)}).
	\end{align*}
	As a consequence of the conditional independence stated above, $ J_{\varphi}(t,x^{(t)},m^{(t)},w)= J_{\varphi}(t,x^{(t)},m^{(t)},\widetilde w)$ if $w\circ \varphi(u, \cdot)=\widetilde w\circ \varphi(u, \cdot)$, for $u \geq t$.
	Indeed, take $w, \widetilde{w} \in \widehat{\mathcal{D}}$ such that $w\circ \varphi(u, \cdot)=\widetilde w\circ \varphi(u, \cdot)$, for $u \geq t$. We have	
	\begin{align}\label{Eq_J_ind_past_strat}
		&J_{\varphi}(t,x^{(t)},m^{(t)},\widetilde w)
		 = \mathbb{E}_{\varphi}\left[\sum_{s=t}^{T-1}f(s,X_s,\mu_s,\widetilde w\circ \varphi(s,X^{(s)},\mu^{(s)}))+ F(X_T,\mu_T)| X^{(t)}=x^{(t)}, \mu^{(t)}=m^{(t)}\right]\\ \nonumber
		& =: \mathbb{E}_{\varphi}\left[G_t(x^{(t)},m^{(t)},(\mu_s)_{s=t+1}^T, (\xi_s)_{s=t+1}^T,(\widetilde w\circ \varphi(s,\cdot))_{s=t}^T)| X^{(t)}=x^{(t)}, \mu^{(t)}=m^{(t)}\right]\\ \nonumber
		& = \int_{\mathcal{Z}^{T-t}}\sum_{m^{[t+1]} \in \mathcal{P}_\varphi^{T-t}} G_t(x^{(t)},m^{(t)},m^{[t+1]},(z_s)_{s=t+1}^{T},(\widetilde w\circ \varphi(s,\cdot))_{s=t}^T) \mathbb{P}_\varphi((\mu_s)_{s=t+1}^T=m^{[t+1]} |\mu^{(t)})\nu^{\otimes (T-t-1)}(dz)\\ \nonumber
		& = \int_{\mathcal{Z}^{T-t}}\sum_{m^{[t+1]} \in \mathcal{P}_\varphi^{T-t}} G_t(x^{(t)},m^{(t)},m^{[t+1]},(z_s)_{s=t+1}^{T},(w\circ \varphi(s,\cdot))_{s=t}^T) \mathbb{P}_\varphi((\mu_s)_{s=t+1}^T=m^{[t+1]} |\mu^{(t)})\nu^{\otimes (T-t-1)}(dz)\\ \nonumber
		& = J_{\varphi}(t,x^{(t)},m^{(t)}, w),
	\end{align}
	where we have used the notation $d z = dz_{t+1},\dots,dz_{T}$ and in the third identity we have exploited the fact that, for any fixed $t \in [\![0,T-1]\!]$, $X^{(t)}$ and $(\mu_s)_{s=t+1}^T$ are $\mathbb{P}_\varphi$-conditionally independent given $\mu^{(t)}$.
	\\
	Then, we write $ J_{\varphi}(t,x^{(t)},m^{(t)},(w_s)_{s=t}^T)= J_{\varphi}(t,x^{(t)},m^{(t)},w)$.
	Thus, the optimal value function is defined as
	\begin{align*}
		 V_{\varphi}& (t,x^{(t)},m^{(t)})\\
		& =\inf_{w_t \in \widehat{\mathcal{R}}_t}\mathbb{E}_{\varphi}\left[\sum_{s=t}^{T-1}f(s,X_s,\mu_s,w_t(s,X^{(s)},\mu^{(s)}))+ F(X_T,\mu_T)| X^{(t)}=x^{(t)}, \mu^{(t)}=m^{(t)}\right],
	\end{align*}
	where $\widehat{\mathcal{R}}_t:= \{ w:[\![t, T]\!] \times \mathcal{X}^T\times \mathcal{P}_{\varphi}\to \Gamma, \text{ progressively measurable}\}$.
	
Our aim, now, is to show that, even in this non-Markovian setting, the following DPP holds.
	
	\begin{proposition}\label{Prop_cond_DPP_CMFG}
		For any $t \in [\![0,T-1]\!]$,
		\begin{align*}
			V_{\varphi}&(t,x^{(t)},m^{(t)})\\&
			=  \inf_{\gamma\in \Gamma}\mathbb{E}_{\varphi}	\bigg[f(t,x_t,m_t,\gamma)
			 +V_{\varphi}\left(t,(x^{(t)},\Psi(t,x_t,m_t,\gamma,\xi_{t+1})),(m^{(t)},\mu_{t+1})\right)\bigg|X^{(t)}=x^{(t)},\mu^{(t)}=m^{(t)}\bigg].
		\end{align*}
	\end{proposition}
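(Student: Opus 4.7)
The plan is to prove the two inequalities $\leq$ and $\geq$ separately, each via a one-step conditional decomposition of $J_\varphi(t, x^{(t)}, m^{(t)}, \cdot)$. The essential ingredient is the following tower-property identity: for any $w \in \widehat{\mathcal{R}}_t$, setting $\gamma := w(t, x^{(t)}, m^{(t)}) \in \Gamma$, one has
\begin{align*}
J_\varphi(t, x^{(t)}, m^{(t)}, w) = \mathbb{E}_\varphi\bigl[&f(t, x_t, m_t, \gamma) + J_\varphi\bigl(t+1, (x^{(t)}, \Psi(t, x_t, m_t, \gamma, \xi_{t+1})), (m^{(t)}, \mu_{t+1}), w\bigr) \\
& \bigm| X^{(t)} = x^{(t)}, \mu^{(t)} = m^{(t)}\bigr].
\end{align*}
This follows by splitting the sum defining $J_\varphi(t, \cdot, \cdot, w)$ at $s = t$, pulling out the deterministic value $f(t, x_t, m_t, \gamma)$, and recognising the remaining sum as $\mathbb{E}_\varphi[\cdot\mid X^{(t+1)}, \mu^{(t+1)}]$ via the tower property applied to the nested $\sigma$-algebras $\sigma(X^{(t)}, \mu^{(t)}) \subseteq \sigma(X^{(t+1)}, \mu^{(t+1)})$, together with the invariance property \eqref{Eq_J_ind_past_strat}, which guarantees that only the values of $w\circ\varphi(s, \cdot)$ for $s \geq t+1$ enter the inner $J_\varphi(t+1, \cdot)$.

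For the lower bound $\geq$, I would observe that the restriction of any $w \in \widehat{\mathcal{R}}_t$ to time indices $s \geq t+1$ belongs to $\widehat{\mathcal{R}}_{t+1}$, so the inner $J_\varphi(t+1, \cdot, \cdot, w)$ is bounded below pointwise by $V_\varphi(t+1, \cdot, \cdot)$. Substituting into the one-step decomposition and noting that $w(t, x^{(t)}, m^{(t)})$ ranges freely over $\Gamma$ as $w$ varies in $\widehat{\mathcal{R}}_t$, one obtains the desired inequality after taking infimum on the left-hand side.

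For the upper bound $\leq$, I would fix $\gamma \in \Gamma$ and $\varepsilon > 0$. Since $\mathcal{X}^{t+2}$ is finite and, by \Hypref{HypSolutionFlows}, $\mathcal{P}_\varphi^{(t+1)}$ is finite, I can select for each of the finitely many pairs $(y^{(t+1)}, \ell^{(t+1)})$ a strategy $w^\ast_{y^{(t+1)}, \ell^{(t+1)}} \in \widehat{\mathcal{R}}_{t+1}$ whose cost is within $\varepsilon$ of $V_\varphi(t+1, y^{(t+1)}, \ell^{(t+1)})$. Gluing these together, I define $\widetilde w \in \widehat{\mathcal{R}}_t$ by $\widetilde w(t, \cdot, \cdot) := \gamma$ and, for $s \geq t+1$, $\widetilde w(s, y^{(s)}, \ell^{(s)}) := w^\ast_{(y_0,\dots,y_{t+1}),(\ell_0,\dots,\ell_{t+1})}(s, y^{(s)}, \ell^{(s)})$, progressively measurable by finiteness of the indexing set. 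Applying the decomposition to $\widetilde w$ then bounds $V_\varphi(t, x^{(t)}, m^{(t)}) \leq J_\varphi(t, x^{(t)}, m^{(t)}, \widetilde w)$ by the right-hand side of the DPP plus $\varepsilon$; optimising over $\gamma \in \Gamma$ and letting $\varepsilon \to 0$ closes the argument.

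The main obstacle will be the rigorous justification of the one-step decomposition under the triple conditioning $\{\Phi = \varphi, X^{(t)} = x^{(t)}, \mu^{(t)} = m^{(t)}\}$, because the closed-loop coupling between $\varphi$ and the flow of measures through $\rho$ makes it delicate to commute conditional expectations with integration against the noise kernel $\nu$. This is precisely where the conditional independence of $X^{(t)}$ and $(\mu_s)_{s \geq t+1}$ given $\mu^{(t)}$, established just before the statement, together with the independence property \textbf{iv)$_\varphi$} and the well-posedness of $\mathbb{P}_\varphi(\cdot \mid \mu^{(t)} = m^{(t)}, X^{(t)} = x^{(t)})$ ensured by \hypref{HypPsiTransitions} and \Hypref{HypSolutionFlows}, play the central role.
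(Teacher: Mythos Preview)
Your proposal is correct and follows essentially the same route as the paper's own proof: both rest on the one-step tower decomposition of $J_\varphi$, the conditional independence of $X^{(t)}$ and $(\mu_s)_{s\geq t+1}$ given $\mu^{(t)}$ together with \eqref{Eq_J_ind_past_strat}, and the finiteness of $\mathcal{P}_\varphi$ (assumption \Hypref{HypSolutionFlows}) to justify exchanging the infimum with the finite sum over $(y,l)$. The only difference is presentational: the paper writes the argument as a single chain of equalities, invoking finiteness to swap $\inf_{w_{t+1}}$ and $\sum_{(y,l)}$ directly, whereas you split into the two inequalities and make the gluing of $\varepsilon$-optimal strategies explicit---these are two equivalent ways of packaging the same step.
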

	
	\begin{proof}
		By construction and measurability properties, it holds
		\begin{align*}
			 V_{\varphi}&(t,x^{(t)},m^{(t)})\\
			&=\inf_{w_t \in \widehat{\mathcal{R}}_t}\mathbb{E}_{\varphi}\Bigg[\sum_{s=t}^{T-1}f(s,X_s,\mu_s,w_t(s,X^{(s)},\mu^{(s)})) + F(X_T,\mu_T)\Big| X^{(t)}=x^{(t)}, \mu^{(t)}=m^{(t)}\Bigg]\\
			& =\inf_{w_t \in \widehat{\mathcal{R}}_t}\mathbb{E}_{\varphi}\Bigg[f(t,x_t,m_t,w_t(t,x^{(t)},m^{(t)}))+\sum_{s=t+1}^{T-1}f(s,X_s,\mu_s,w_t(s,X^{(s)},\mu^{(s)}))\\
			& \qquad + F(X_T,\mu_T)\Big| X^{(t)}=x^{(t)}, \mu^{(t)}=m^{(t)}\Bigg]\\
			& =\inf_{w_t \in \widehat{\mathcal{R}}_t}\Bigg\{f(t,x_t,m_t,w_t(t,x^{(t)},m^{(t)}))\\
			& \quad +\sum_{(y,l) \in \mathcal{X}\times \mathcal{P}_\varphi}\mathbb{P}_\varphi(\Psi(t,x_t,m_t,w_t(t,x^{(t)},m^{(t)}),\xi_{t+1})=y,\mu_{t+1}=l| X^{(t)}=x^{(t)}, \mu^{(t)}=m^{(t)}) \cdot\\
			& \qquad \cdot \mathbb{E}_{\varphi}\Big[\sum_{s=t+1}^{T-1}f(s,X_s,\mu_s,w_t(s,X^{(s)},\mu^{(s)}))+ F(X_T,\mu_T)\Big| X^{(t+1)}=(x^{(t)},y), \mu^{(t+1)}=(m^{(t)},l)\Big]	
			\Bigg\}
		\end{align*}
		Now, exploiting the fact that $\mathcal{P}_\varphi$ is finite (to exchange the $\inf$ and the summation) and the conditional independence property shown above (together with the consequent identity in Equation \eqref{Eq_J_ind_past_strat}), we have
		\begin{align*}
			 V_{\varphi}&(t,x^{(t)},m^{(t)})\\
			& =\inf_{\gamma \in \Gamma}\inf_{w_{t+1} \in \widehat{\mathcal{R}}_{t+1}}\Bigg\{f(t,x_t,m_t,\gamma)\\
			& \quad +\sum_{(y,l) \in \mathcal{X\times P_\varphi}}\mathbb{P}_\varphi (\Psi(t,x_t,m_t,\gamma,\xi_{t+1})=y,\mu_{t+1}=l| X^{(t)}=x^{(t)}, \mu^{(t)}=m^{(t)}) \cdot\\
			& \qquad \cdot \mathbb{E}_{\varphi}\Big[\sum_{s=t+1}^{T-1}f(s,X_s,\mu_s,w_s(X^{(s)},\mu^{(s)})) + F(X_T,\mu_T)\Big| X^{(t+1)}=(x^{(t)},y), \mu^{(t+1)}=(m^{(t)},l)\Big]	
		\Bigg\}\\
			& =\inf_{\gamma \in \Gamma}\Bigg\{f(t,x_t,m_t,\gamma)\\
			& \quad +\sum_{(y,l) \in \mathcal{X\times P_\varphi}}\inf_{w_{t+1} \in \widehat{\mathcal{R}}_{t+1}}\Bigg\{\mathbb{P}_\varphi (\Psi(t,x_t,m_t,\gamma,\xi_{t+1})=y,\mu_{t+1}=l| X^{(t)}=x^{(t)}, \mu^{(t)}=m^{(t)})\cdot\\
			& \qquad \cdot 
			\mathbb{E}_{\varphi}\Big[\sum_{s=t+1}^{T-1}f(s,X_s,\mu_s,w_s(X^{(s)},\mu^{(s)})) + F(X_T,\mu_T)\Big| X^{(t+1)}=(x^{(t)},y), \mu^{(t+1)}=(m^{(t)},l)\Big]
			\Bigg\} \Bigg\}\\
			& =\inf_{\gamma \in \Gamma}\Bigg\{f(t,x_t,m_t,\gamma) 
			+\sum_{(y,l) \in \mathcal{X\times P_\varphi}} \Bigg\{ V_\varphi(t+1,(x^{(t+1)},y),(m^{(t)},l)) \cdot\\
			& \qquad \cdot \mathbb{P}_\varphi (\Psi(t,x_t,m_t,\gamma,\xi_{t+1})=y,\mu_{t+1}=l| X^{(t)}=x^{(t)}, \mu^{(t)}=m^{(t)}) \Bigg\} \Bigg\}\\
			& = \inf_{\gamma\in \Gamma}\Bigg\{\mathbb{E}_{\varphi}\Big[f(t,x_t,m_t,\gamma)
			+ V_{\varphi}(t,(x^{(t)},\Psi(t,x_t,m_t,\gamma,\xi_{t+1})),(m^{(t)},\mu_{t+1}))|X^{(t)}=x^{(t)},\mu^{(t)}=m^{(t)}\Big]\Bigg\}.
		\end{align*}
	\end{proof}
	Thus, we have shown the DPP and we can proceed with the second step.

\begin{proposition}\label{Prop_equiv_CMFGs}
	Assume \hypref{HypPsiTransitions}. Let  $\rho \in \mathcal{P}(\mathcal{R}\times \PX)$ be a correlated solution of the MFG in closed-loop strategies starting at $\mathfrak{m}_0$ according to Definition  \ref{Def_CMFG_closed}.
	If $\rho$ satisfies \Hypref{HypSolutionFlows}, then $\rho$ is a solution for the mean field game in open-loop strategies, as in Definition \ref{Def_CMFG_open}, too. 
	In particular,  for any $\varphi \in \mathcal{R}$, $\widehat V_{\varphi}(t,x^{(t)},m^{(t)})$ and $ V_{\varphi}(t,x^{(t)},m^{(t)})$ coincide.
\end{proposition}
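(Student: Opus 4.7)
The plan is to establish the pointwise identity $\widehat V_\varphi(t,x^{(t)},m^{(t)}) = V_\varphi(t,x^{(t)},m^{(t)})$ for every $t \in [\![0,T]\!]$, every $x^{(t)} \in \mathcal{X}^{t+1}$ and every $m^{(t)} \in \mathcal{P}_\varphi^{(t)}$, by backward induction on $t$ based on a dynamic programming principle in each of the two settings. Once this is done, integrating the equality at $t=0$ against the joint law of $(X_0, \Phi, \mu)$ yields both the open-loop optimality \textbf{(Opt)} and the consistency \textbf{(Con)} of Definition~\ref{Def_CMFG_open}.

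The easy inequality $\widehat V_\varphi \leq V_\varphi$ comes from embedding closed-loop strategies into open-loop ones: any $w_t \in \widehat{\mathcal{R}}_t$ yields the open-loop control $u_s := w_t(s,X^{(s)},\mu^{(s)})$, which is measurable with respect to $\mathcal{G}_s := \sigma(X_0) \vee \sigma(\Phi) \vee \sigma(\mu^{(s)}) \vee \sigma(\xi_1,\ldots,\xi_s)$. Taking $\mathcal{H}_s := \sigma(\xi_1,\ldots,\xi_s)$ satisfies all conditions in \textbf{iv')} since the noises are independent of $\sigma(X_0, \Phi, \mu)$ by \textbf{iv)}, and the associated cost agrees with $J_\varphi$.

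For the reverse direction, I would establish an open-loop counterpart of Proposition~\ref{Prop_cond_DPP_CMFG}:
\[
\widehat V_\varphi(t,x^{(t)},m^{(t)}) = \inf_{\gamma \in \Gamma}\mathbb{E}_\varphi\Bigl[f(t,x_t,m_t,\gamma) + \widehat V_\varphi\bigl(t{+}1,(x^{(t)},\Psi(t,x_t,m_t,\gamma,\xi_{t+1})),(m^{(t)},\mu_{t+1})\bigr)\Big|X^{(t)}=x^{(t)},\mu^{(t)}=m^{(t)}\Bigr].
\]
The crucial point is reducing the optimization of $u_t$, over all $\mathcal{G}_t$-measurable $\Gamma$-valued random variables, to a pointwise infimum over the finite set $\Gamma$ evaluated at $(x^{(t)}, m^{(t)}, \varphi)$. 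Three ingredients are combined: by \textbf{iv')}, $(\xi_s)_{s>t}$ is independent of $\mathcal{G}_t$, so the conditional future cost given $\mathcal{G}_t$ depends on $u_t$ only through its realized value; conditionally on $\{\Phi=\varphi,\mu^{(t)}=m^{(t)}\}$, the law of $(\mu_s)_{s>t}$ is fixed, independent of $\mathcal{H}_t$, and supported on a finite set under Assumption~\Hypref{HypSolutionFlows}; and finiteness of $\Gamma$, $\mathcal{X}$ and $\mathcal{P}_\varphi$ allows exchanging the $\inf$ with the resulting finite sums and realizing the infimum by a deterministic $\gamma$.

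With both DPPs available, backward induction from the common terminal condition $\widehat V_\varphi(T,\cdot,\cdot) = F(\cdot,\cdot) = V_\varphi(T,\cdot,\cdot)$ yields equality of the value functions at every $t$. Integrating at $t=0$ and using the closed-loop optimality of $\rho$ gives, for any $(u_t) \in \mathcal{A}$, the chain $\widehat J(\mathfrak{m}_0,\rho,(u_t)) \geq \mathbb{E}[\widehat V_\varphi(0,X_0,\mu_0)] = \mathbb{E}[V_\varphi(0,X_0,\mu_0)] = J(\mathfrak{m}_0,\rho,\iota) = \widehat J(\mathfrak{m}_0,\rho,(u_t^\iota))$, which is \textbf{(Opt)}. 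The consistency \textbf{(Con)} transfers directly from the closed-loop case because $u_t^\iota = \Phi(t,X_t)$ produces exactly the same state recursion, and hence the same joint law of $(X,\mu)$, as the closed-loop realization of $\iota$ in Definition~\ref{Def_CMFG_closed}. The main obstacle will be the rigorous justification of the open-loop DPP step: verifying that no improvement over deterministic progressive feedback can be extracted from the auxiliary randomness $\mathcal{H}_t$ requires careful bookkeeping of the conditional independences in \textbf{iv)} and \textbf{iv')}, together with the finiteness of the relevant state, action and measure sets.
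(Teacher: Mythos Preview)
Your proposal is correct and follows essentially the same route as the paper: the easy inequality by embedding, then backward induction using conditional independence of $u_t$, $\xi_{t+1}$, and $\mu_{t+1}$ given $(\Phi,X^{(t)},\mu^{(t)})$ to reduce the open-loop infimum to a pointwise infimum over $\Gamma$. The only organizational difference is that the paper does not state a separate open-loop DPP; instead it shows directly that $\widehat J_\varphi(t,x^{(t)},m^{(t)},(u_s)) \geq V_\varphi(t,x^{(t)},m^{(t)})$ for every admissible $(u_s)$ by invoking the induction hypothesis $\widehat V_\varphi(t{+}1)=V_\varphi(t{+}1)$ and then the closed-loop DPP of Proposition~\ref{Prop_cond_DPP_CMFG}, which is slightly more economical than proving a second DPP.
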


\begin{remark}
	Notice that, since the consistency conditions in Definitions \ref{Def_CMFG_closed} and \ref{Def_CMFG_open} are the same and the set of closed-loop strategies is included in the set of open-loop strategies, a solution of the correlated MFG in open-loop strategies, $\rho \in \mathcal{P}(\mathcal{R}\times \PX)$, is automatically a solution for the corresponding game in closed-loop strategies.
\end{remark}

\begin{proof}	
		
	We have already discussed the form of the objective functional for the MFG in closed-loop controls when showing the DPP.
	Regarding the relaxed MFG, conditionally on the suggestion received by the representative player (that is on the event $\{\Phi=\varphi\}$), a realization of $(m_0, \rho, (u_t)_{t=0}^{T-1})$, i.e. a tuple $\big((\Omega, \mathcal{F},\{\mathcal{G}_t\}_{t=0}^{T-1},\mathbb{P}),\Phi, (\mu_t)_{t=0}^T$, $X_0,(\xi_t)_{t=1}^T, (u_t)_{t=0}^{T-1}, (X_t)_{t=0}^T \big)$, satisfies the following:
	\begin{itemize}
		\item[\textbf{i)$_{\varphi}$}] ${\mathbb{P}}_{\varphi}\circ (X_0)^{-1}=\mathfrak{m}_0$;
	
		\item[\textbf{ii)$_{\varphi}$}] ${\mathbb{P}}_{\varphi}\circ (\Phi,(\mu_t)_{t=0}^T)^{-1}=\delta_{\varphi}\otimes\rho_2(\cdot|\varphi)$;
	
		\item[\textbf{iii)$_{\varphi}$}] $(\xi_t)_{t=1}^T$ are i.i.d. all distributed according $\mathbb{P}_{\varphi}\circ(\xi_t)^{-1}=\nu;$
	
		\item[\textbf{iv)$_{\varphi}$}] $(\xi_t)_{t=1}^T$, $X_0$ and $(\mu_t)_{t=0}^T$ are independent w.r.t. $\mathbb{P}_{\varphi}$;
		
		\item[\textbf{iv')$_{\varphi}$}] For each $t \in [\![0,T-1]\!]$,
		\begin{itemize}
			\item $\xi_t$ is  $\mathcal{G}_t$-measurable and $\xi_{t+k}$, $k=1,\dots,T-t$, are jointly independent of $\mathcal{G}_t$ w.r.t. $\mathbb{P}_{\varphi}$,
			
			\item $\mathcal{G}_t=\mathcal{H}_t \lor \sigma(\mu^{(t)})\lor \sigma(\Phi)\lor \sigma(X_0)$, with  $\mathcal{H}_t$ independent of $\sigma((\mu_t)_{t=0}^T,X_0,\Phi)$ w.r.t. $\mathbb{P}_{\varphi}$,
			
			\item $u_t$ is $\mathcal{G}_t$-measurable,	
		\end{itemize}		 
		\item[\textbf{v')$_{\varphi}$}] 
		for any $t \in [\![0,T-1]\!],$
		\begin{equation}
			\begin{split}
				&X_{t+1}=\Psi\left(t,X_{t},\mu_t,u_t,\xi_{t+1}\right), \qquad \mathbb{P}_\varphi\text{-a.s.}.
			\end{split}
		\end{equation}
	\end{itemize}
	
	Let's quickly review how we check the properties in \textbf{iv')$_{\varphi}$}. Notice that the other properties are trivial. Let us first recall that measurability properties concern $\sigma$-algebras and not the specific probability measure on them, hence we have to exhibit proofs only for the independence properties. For any arbitrary fixed $t \in [\![0,T-1]\!]$, we have
		\begin{itemize}
			\item $(\xi_{t+k})_{k=1}^{T-t}$, are jointly independent of $\mathcal{G}_t$ w.r.t. $\mathbb{P}_{\varphi}$. Indeed, let $A \in \mathcal{G}_t$ and $(B_k)_{k=1}^{T-t} \in \mathcal{B}(\mathcal{Z})$,  exploiting the tower property, the fact that $\sigma(\Phi) \subset \mathcal{G}_t$ and the fact that $\mathcal{G}_t$ and $(\xi_{t+k})_{k=1}^{T-t}$ are independent w.r.t. $\mathbb{P}$, we obtain
			\begin{align*}
				&\mathbb{P}_{\varphi}(A\cap \{\xi_{t+1} \in B_1\} \cap \dots \cap \{\xi_{T} \in B_{T-t}\})
				= \frac{\mathbb{E}[\mathbf{1}_{\{\varphi\}}(\Phi)\mathbf{1}_A\mathbf{1}_{B_1}(\xi_{t+1}) \dots \mathbf{1}_{B_{T-t}}(\xi_{T})]}{\mathbb{P}(\Phi=\varphi)}\\
				& = \frac{\mathbb{E}[\mathbb{E}[\mathbf{1}_{\{\varphi\}}(\Phi)\mathbf{1}_A\mathbf{1}_{B_1}(\xi_{t+1}) \dots \mathbf{1}_{B_{T-t}}(\xi_{T})|\mathcal{G}_t]]}{\mathbb{P}(\Phi=\varphi)}
				 = \frac{\mathbb{E}[\mathbf{1}_{\{\varphi\}}(\Phi)\mathbf{1}_A\mathbb{E}[\mathbf{1}_{B_1}(\xi_{t+1}) \dots \mathbf{1}_{B_{T-t}}(\xi_{T})|\mathcal{G}_t]]}{\mathbb{P}(\Phi=\varphi)}\\
				& = \frac{\mathbb{E}[\mathbf{1}_{\{\varphi\}}(\Phi)\mathbf{1}_A\prod_{k=1}^{T-t}\mathbb{P}(\xi_{t+k} \in B_{k})]}{\mathbb{P}(\Phi=\varphi)} 
				= \prod_{k=1}^{T-t}\mathbb{P}(\xi_{t+k} \in B_{k}) \mathbb{P}(A|\Phi=\varphi).
			\end{align*}	
			\item $\mathcal{G}_t=\mathcal{H}_t \lor \sigma(\mu^{(t)})\lor \sigma(\Phi)\lor \sigma(X_0)$, with  $\mathcal{H}_t$ independent of $\sigma(\mu,X_0,\Phi)$ w.r.t. $\mathbb{P}_{\varphi}$. By assumption, $\mathcal{H}_t$, $\sigma(X_0)$ and $\sigma(\Phi,\mu)$ are independent w.r.t. $\mathbb{P}$. Take $A \in \mathcal{H}_t$, $B \in \sigma(\mu)$ and $C \in \sigma(X_0)$, we get
			\begin{align*}
				\mathbb{P}_{\varphi}(A\cap B \cap C)
				& = \frac{\mathbb{E}[\mathbf{1}_{\{\varphi\}}(\Phi)\mathbf{1}_A\mathbf{1}_{B}\mathbf{1}_{C}]}{\mathbb{P}(\Phi=\varphi)}
				= \frac{\mathbb{E}[\mathbb{E}[\mathbf{1}_{\{\varphi\}}(\Phi)\mathbf{1}_A\mathbf{1}_{B}\mathbf{1}_{C}|\Phi,\mu]]}{\mathbb{P}(\Phi=\varphi)}
				= \frac{\mathbb{E}[\mathbf{1}_{\{\varphi\}}(\Phi) \mathbf{1}_{B}\mathbb{E}[\mathbf{1}_A\mathbf{1}_{C}|\Phi,\mu]]}{\mathbb{P}(\Phi=\varphi)}\\
				& = \frac{\mathbb{E}[\mathbf{1}_{\{\varphi\}}(\Phi) \mathbf{1}_{B}\mathbb{E}[\mathbf{1}_A\mathbf{1}_{C}]]}{\mathbb{P}(\Phi=\varphi)}
				= \frac{\mathbb{P}(A)\mathbb{P}(C)\mathbb{E}[\mathbf{1}_{\{\varphi\}}(\Phi) \mathbf{1}_{B}]}{\mathbb{P}(\Phi=\varphi)}
				= \mathbb{P}_{\varphi}(A)\mathbb{P}_{\varphi}(C)\mathbb{P}_{\varphi}(B).
			\end{align*}
		\end{itemize}

	The (conditional) costs associated to the triple $(\mathfrak{m}_0, \rho, (u_t)_{t=0}^{T-1} ) \in \mathcal{P(X)}\times  \mathcal{P}\big(\mathcal{R}  \times  \mathcal{P(X)}^{T+1}\big) \times \mathcal A$ are
	\begin{equation}
		\widehat J_{\varphi}(\mathfrak{m}_0, \rho, (u_t)_{t=0}^{T-1})
		:= \mathbb{E}_{\varphi}\left[ \sum_{t=0}^{T-1}f\left(t,X_{t},\mu_t,u_t)\right) +F\left(X_T,\mu_T\right) \right].
	\end{equation}
	Then, 
	\begin{align*}
		& \widehat V_{\varphi}(t,x^{(t)},m^{(t)})=\inf_{(u_t)_{t=0}^{T-1}  \in \mathcal A} \mathbb{E}_{\varphi}\left[\sum_{s=t}^{T-1}f(s,X_s,\mu_s,u_s)+  F(X_T,\mu_T)| X^{(t)}=x^{(t)}, \mu^{(t)}=m^{(t)}\right],
	\end{align*}
	and so, in particular, at the terminal time $T \in \mathbb{N}$, we have
	\begin{align*}
		& \widehat V_{\varphi}(T,x^{(T)},m^{(T)})=F(X_T,m_T).
	\end{align*}
	We want to prove that $\widehat{V}_{\varphi}=V_{\varphi}$.
	One side of the inequality is straightforward. Indeed, closed-loop controls as in Definition \ref{mf_real} induce admissible open-loop controls in the sense of Definition \ref{mf_real_relaxed}, through
		\begin{align*}
		u_t:=w\circ\varphi(t, X^{(t)},\mu^{(t)}), \qquad t \in [\![0,T-1]\!].
		\end{align*}		 
		Thus, it holds $\widehat{V}_{\varphi} \leq V_{\varphi}$.
		We show that 	$\widehat{V}_{\varphi} \geq V_{\varphi}$, by backward induction on $t$. 
		We have $\widehat V_{\varphi}(T,x^{(T)},m^{(T)})=F(X_T,m_T)=V_{\varphi}(T,x^{(T)},m^{(T)})$. 
		Now, as an induction hypothesis, assume that $\widehat V_{\varphi}(t+1,x^{(t+1)},m^{(t+1)})= V_\varphi(t+1,x^{(t+1)},m^{(t+1)})$. 
		To prove that $\widehat V_{\varphi}(t,x^{(t)},m^{(t)})= V_{\varphi}(t,x^{(t)},m^{(t)})$, it is enough to check that $\widehat J_{\varphi}(t,x^{(t)},m^{(t)},(u_t)_{t=0}^{T-1})\geq  V_{\varphi}(t,x^{(t)},m^{(t)})$, for any admissible sequence of controls $u \in \mathcal{A}$.
		Exploiting the definitions and induction hypothesis, we see 	
		\begin{align*}
			&\widehat J_{\varphi}(t,x^{(t)},m^{(t)},(u_t)_{t=0}^{T-1})
			= \mathbb{E}_{\varphi}\left[\sum_{s=t}^{T-1}f(s,X_s,\mu_s,u_s)+  F(X_T,\mu_T) \bigg| X^{(t)}=x^{(t)}, \mu^{(t)}=m^{(t)}\right]\\
			&= \mathbb{E}_{\varphi}\left[f(t,x_t,m_t,u_t)\Big| X^{(t)}=x^{(t)}, \mu^{(t)}=m^{(t)}\right]\\
			& \quad +\int_{\mathcal{X}\times\mathcal{P(X)}}\mathbb{P}_{\varphi}(X_{t+1}=X_{t+1}, \mu_{t+1}=m_{t+1}| X^{(t)}=x^{(t)}, \mu^{(t)}=m^{(t)})\cdot\\
			& \quad \cdot 	\mathbb{E}_{\varphi}\left[\sum_{s=t+1}^{T-1}f(s,X_s,\mu_s,u_s)+  F(X_T,\mu_T)\bigg| X^{(t+1)}=x^{(t+1)}, \mu^{(t+1)}=m^{(t+1)}\right] \\
			& = \mathbb{E}_{\varphi}\left[f(t,x_t,m_t,u_t)\Big| X^{(t)}=x^{(t)}, \mu^{(t)}=m^{(t)}\right]
			+\int_{\mathcal{X}\times\mathcal{P(X)}}\widehat J_\varphi(t+1,x^{(t+1)},m^{(t+1)},u)\cdot\\
			& \quad \cdot \mathbb{P}_{\varphi}(X_{t+1}=x_{t+1}, \mu_{t+1}=m_{t+1} | X^{(t)}=x^{(t)}, \mu^{(t)}=m^{(t)})\\
			&\geq \mathbb{E}_{\varphi}\left[f(t,x_t,m_t,u_t)\Big| X^{(t)}=x^{(t)}, \mu^{(t)}=m^{(t)}\right] +\int_{\mathcal{X}\times\mathcal{P(X)}} V_\varphi(t+1,x^{(t+1)},m^{(t+1)})\cdot\\
			& \quad \cdot \mathbb{P}_{\varphi}(X_{t+1}=x_{t+1}, \mu_{t+1}=m_{t+1}| X^{(t)}=x^{(t)}, \mu^{(t)}=m^{(t)}).
		\end{align*}
		Now, exploiting, in sequence, the fact that $\xi_{t+1}$ is jointly independent of $X^{(t)}, u_t$ and $\mu^{(t+1)}$, the tower property, the fact that $u_t$ is $\mathcal{G}_t$-measurable, the fact that $\mu_{t+1}$ and $\mathcal{G}_t$ are $\mathbb{P}_\varphi$-conditionally independent given $\mu^{(t)}$ and the measurability properties of conditional expectations, we obtain, for any $A \in \mathcal{B(P(X))}$, $B \in \mathcal{B}(\mathcal{Z})$ and $C \in \mathcal{B}(\Gamma)$, 
		\begin{align}\label{Eq_vars_cond_ind}
			& \mathbb{P}_{\varphi}(\mu_{t+1} \in A, \xi_{t+1} \in B, u_t \in C | X^{(t)}=x^{(t)}, \mu^{(t)}=m^{(t)})\\ \nonumber
			& = \mathbb{P}_{\varphi}( \xi_{t+1} \in B| X^{(t)}=x^{(t)}, \mu^{(t)}=m^{(t)})\mathbb{P}_{\varphi}(\mu_{t+1} \in A, u_t \in C | X^{(t)}=x^{(t)}, \mu^{(t)}=m^{(t)})\\ \nonumber
			& = \mathbb{P}_{\varphi}( \xi_{t+1} \in B)\frac{\mathbb{E}[(\mathbf{1}_{A}(\mu_{t+1})\mathbf{1}_{C}(u_{t}))(\mathbf{1}_{\{x^{(t)}\}}(X^{(t)})\mathbf{1}_{\{m^{(t)}\}}(\mu^{(t)})\mathbf{1}_{\{\varphi\}}(\Phi))]}{\mathbb{P}(X^{(t)}=x^{(t)}, \mu^{(t)}=m^{(t)}, \Phi=\varphi)}\\ \nonumber
			& = \mathbb{P}_{\varphi}( \xi_{t+1} \in B)\frac{\mathbb{E}[\mathbb{E}[\mathbf{1}_{A}(\mu_{t+1})|\mathcal{G}_t]] \mathbf{1}_{C}(u_{t})\mathbf{1}_{\{x^{(t)}\}}(X^{(t)})\mathbf{1}_{\{m^{(t)}\}}(\mu^{(t)})\mathbf{1}_{\{\varphi\}}(\Phi) }{\mathbb{P}(X^{(t)}=x^{(t)}, \mu^{(t)}=m^{(t)}, \Phi=\varphi)}\\ \nonumber
			& = \mathbb{P}_{\varphi}( \xi_{t+1} \in B)\frac{\mathbb{E}[\mathbb{E}[\mathbf{1}_{A}(\mu_{t+1})|\Phi, \mu^{(t)}]\mathbf{1}_{C}(u_{t})\mathbf{1}_{\{x^{(t)}\}}(X^{(t)})\mathbf{1}_{\{m^{(t)}\}}(\mu^{(t)})\mathbf{1}_{\{\varphi\}}(\Phi) ]}{\mathbb{P}(X^{(t)}=x^{(t)}, \mu^{(t)}=m^{(t)}, \Phi=\varphi)}\\ \nonumber
			& = \mathbb{P}_{\varphi}( \xi_{t+1} \in B)\mathbb{E}[\mathbf{1}_C(u_t) \mathbb{E}[\mathbf{1}_A(\mu_{t+1})| \Phi, \mu^{(t)} ] | X^{(t)}=x^{(t)}, \mu^{(t)}=m^{(t)}, \Phi=\varphi]\\ \nonumber
			& = \mathbb{P}_{\varphi}( \xi_{t+1} \in B)\mathbb{E}[\mathbf{1}_A(\mu_{t+1})| \Phi=\varphi, \mu^{(t)}=m^{(t)} ] \mathbb{E}[\mathbf{1}_C(u_t)  | X^{(t)}=x^{(t)}, \mu^{(t)}=m^{(t)}, \Phi=\varphi]\\ \nonumber
			& =\mathbb{P}_{\varphi}( \xi_{t+1} \in B) \mathbb{P}_{\varphi}(\mu_{t+1} \in A|\mu^{(t)}=m^{(t)})\mathbb{P}_{\varphi}(u_t \in C | X^{(t)}=x^{(t)}, \mu^{(t)}=m^{(t)}) \\ \nonumber
			& =\nu(B)\mathbb{P}_{\varphi}(\mu_{t+1} \in A|\mu^{(t)}=m^{(t)})\lambda_t(C),
		\end{align}
		where $\lambda_t(C):=\mathbb{P}_{\varphi}(u_t \in C | X^{(t)}=x^{(t)}, \mu^{(t)}=m^{(t)})$.
		Then, exploiting the iterative dynamics of the state and Equation \eqref{Eq_vars_cond_ind}, we have
		\begin{align*}
			& \widehat J_{\varphi}(t,x^{(t)},m^{(t)},(u_t)_{t=0}^{T-1})
			\geq \mathbb{E}_{\varphi}\Big[f(t,x,m_t,u_t) \\
			& \qquad +  V_{\varphi}(t+1,(x^{(t)},\Psi(t,x_t,m_t,u_t,\xi_{t+1})),(m^{(t)},\mu_{t+1}))| X^{(t)}=x^{(t)}, \mu^{(t)}=m^{(t)}\Big]\\
			& = \mathbb{E}_{\varphi}\Big[f(t,x,m_t,u_t)
			 + \int_{\mathcal Z}  V_{\varphi}(t+1,(x^{(t)},\Psi(t,x_t,m_t,u_t,z)),(m^{(t)},\mu_{t+1}))\nu(dz)| X^{(t)}=x^{(t)}, \mu^{(t)}=m^{(t)}\Big]\\
			& = \int_{\Gamma} \Bigg\{ f(t,x,m_t,\gamma)
			 + \int_{\mathcal Z}\mathbb{E}_{\varphi}\Big[ V_{\varphi}(t+1,(x^{(t)},\Psi(t,x_t,m_t,\gamma,z)),(m^{(t)},\mu_{t+1}))| \mu^{(t)}=m^{(t)}\Big] \nu(dz) \Bigg\} \lambda_t(d\gamma)\\
			&\geq \inf_{\gamma \in \Gamma}\Big\{f(t,x,m_t,\gamma))
			 + \int_{\mathcal{Z}}\mathbb{E}_{\varphi}\left[  V_{\varphi}(t+1,(x^{(t)},\Psi(t,x_t,m_t,\gamma,z)),(m^{(t)},\mu_{t+1}))| \mu^{(t)}=m^{(t)}\right]\nu(dz) \Big\}\\
			& = V_{\varphi}(t,x^{(t)},m^{(t)}),
		\end{align*}
		where the last identity follows from the DPP in Proposition \ref{Prop_cond_DPP_CMFG}.
	Finally, to conclude it is sufficient to integrate with respect to $\rho_1(d \varphi)$.
	Hence, we have shown that under Assumptions \hypref{HypPsiTransitions} and \Hypref{HypSolutionFlows} the optimal value of the two mean field games is the same.

\end{proof}

\section{Approximate N-player Correlated Equilibria} \label{SectApproximate}

Here, we show how to construct approximate $N$-player correlated equilibria starting from a suitable solution of the MFG. We make the following additional assumptions on dynamics and costs:
\begin{hypenv}

	\item \label{HypPsiContinuity} \emph{Continuity} of $\Psi\colon [\![0,T-1]\!]\times \mathcal{X}\times \Gamma \times \mathcal{Z} \to \mathcal{X}:$  
	
	\begin{itemize}
		\item[1)] For every $(t,x,\gamma) \in [\![0,T-1]\!] \times \mathcal{X}\times \Gamma $  and for  all $m, \widetilde{m} \in \mathcal{P(X)}$,
		\[
			\nu\left( \left\{ z\colon \Psi(t,x,m,\gamma,z)\neq \Psi(t,x,\widetilde{m},\gamma,z)\right\} \right)\leq w(\text{dist}(m, \widetilde{m})),
		\]
		where $w\colon [0, +\infty) \to [0,1]$ is some non-decreasing function with $\lim_{s \to 0^+}w(s) = 0$.
		
		\item[2)] For any $t \in [\![0,T-1]\!]$, $\Psi(t,\cdot)$ is $\tau \otimes \nu$-almost everywhere continuous, for every $\tau \in \mathcal{P(X \times  P(X)} \times \Gamma)$.
	\end{itemize}
	
	\item \label{HypCosts} The functions $f$ and $ F$, running cost and terminal cost,  are Lipschitz continuous with the same Lipschitz constant $L$.
\end{hypenv}

For an illustration of the continuity assumption \hypref{HypPsiContinuity} on the dynamics, see Remark~6.1 in \cite{CF2022}. Assumption~\hypref{HypCosts} is rather standard; in our finite setting, it is a true restriction only with respect to the measure argument of $f$ and $F$. 

The correlated suggestion $\rho$ we start with must satisfy, in addition to \Hypref{HypSolutionFlows}, the following condition on its information structure:

\begin{Hypenv}
	\item \label{HypSolutionCondInd}
	If $(\Phi,(\mu_t)_{t=0}^T)$ is distributed according to $\rho$, then there exist $\alpha_t:[0,1]\times \mathcal{P}(X)^{t+2}\to\mathcal{E}$, $t \in [\![0,T-1]\!]$, Borel-measurable functions and a uniformly distributed random variable $Z\overset{d}{\sim} \nu$, independent of $\mu$ s.t. $\Phi(t, \cdot):=\alpha_t(Z, \mu^{(t+1)})(\cdot)$, for all $t \in [\![0,T-1]\!]$. 
\end{Hypenv}

\begin{remark}\label{Rem_dec_rho}
	If $\rho$ satisfies \Hypref{HypSolutionCondInd}, then it admits a decomposition of the form
	\begin{align*}
		\rho(C_0\times \dots \times C_{T-1}\times B)&
		=\int_{B}\rho_1(C_0\times \dots \times C_{T-1} | m)\rho_2(dm)\\
		& =\int_{B}\int_{\mathcal{Z}}\otimes_{t=0}^{T-1} \delta_{\alpha_t(z,m^{(t+1)})}(C_t)\nu(dz)\rho_2(dm),
	\end{align*}
	for any $C_t\in \mathcal{B}(\mathcal{E}),$ $t \in [\![0,T-1]\!]$ and  $ B \in \otimes^{T+1}\mathcal{B}(\mathcal{P}(\mathcal{X}))$ and where, for $t \in [\![0,T-1]\!]$, $\alpha_t: [0,1]\times \mathcal{P(X)}^{t+2} \to \mathcal{E}$ are Borel functions.
	
	Finally let us notice that, if $(\Phi,(\mu_t)_{t=0}^T)$ is distributed according to $\rho$ that satisfies \textbf{(R2)}, then, for each $t \in [\![0,T-1]\!]$, $\Phi^{(t)}$ and $\mu$ are conditionally independent given $\mu^{(t+1)}$.
	The example presented in Section \ref{SectMFGExample} seems to suggest that the two conditions are equivalent.	
\end{remark}


\begin{theorem}\label{Thm_approx_eq}

	Let $m_0 \in \mathcal{P(X)}$, and suppose \hyprefall\ hold.
	Let $\rho \in \mathcal{P}(\mathcal{{R}}\times \mathcal{P}(\mathcal{X})^{T+1})$ be a correlated solution of the mean field game starting at $m_0$ and satisfying \Hyprefall.
	For $N \in \mathbb{N}$, define $\gamma^N \in \mathcal{P}(\mathcal{R}^N)$ by
	\[ 
		\gamma^N(C_1 \times \dots \times C_N):= \int_{\mathcal{P(X)}^{T+1}}\prod_{j=1}^N \rho_1(C_j | m)\rho_2(dm).
	\]
	Then, for all $N \in \mathbb N$, $\gamma^N $ is an $\varepsilon_N$-correlated equilibrium for the $N$-player game with initial distribution $m_0^{\otimes N}$ and the sequence $\{ \varepsilon_N\}_{N \in \mathbb{N}} \subseteq [0, +\infty)$ is such that $\lim_{N \to \infty}\varepsilon_N = 0.$
\end{theorem}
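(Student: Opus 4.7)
The plan is to reduce the problem, by the exchangeability of $m_0^{\otimes N}$ and of $\gamma^N$, to player $i=1$, and to establish the two one-sided bounds
\[
J_1^N(\mathfrak{m}^N,\gamma^N,\iota_{\gamma^N_1}) \;\le\; J(\mathfrak{m}_0,\rho,\iota)+\eta_N, \qquad \inf_{\widetilde\gamma\in\mathcal N^N_{\gamma^N_1}} J_1^N(\mathfrak{m}^N,\gamma^N,\widetilde\gamma) \;\ge\; J(\mathfrak{m}_0,\rho,\iota)-\eta'_N,
\]
with $\eta_N,\eta'_N\to 0$; then $\varepsilon_N:=\eta_N+\eta'_N$ works. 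The key intermediate object is the MFG equilibrium value $J(\mathfrak{m}_0,\rho,\iota)$ read in its open-loop version thanks to Proposition~\ref{Prop_equiv_CMFGs}, which is applicable since $\rho$ satisfies \hypref{HypPsiTransitions} and \Hypref{HypSolutionFlows}.

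The technical workhorse is a conditional propagation of chaos. Conditioning on $\mu$ sampled from $\rho_2$, the construction of $\gamma^N$ makes the recommendations $(\Phi_1,\dots,\Phi_N)$ i.i.d.\ with law $\rho_1(\cdot\mid\mu)$, and, via \Hypref{HypSolutionCondInd}, we can actually write $\Phi_j(t,\cdot)=\alpha_t(Z^j,\mu^{(t+1)})$ with i.i.d.\ $Z^1,\dots,Z^N\sim\nu$ independent of $\mu$ and of the idiosyncratic noise. Together with the i.i.d.\ initial states and noises $\xi^{j,N}$, this yields that, conditionally on $\mu$, the processes $(X^{j,N})_{j\neq 1}$ are conditionally i.i.d.; combined with the continuity hypothesis \hypref{HypPsiContinuity} and the consistency \textbf{(Con)}, a finite-time induction shows $\text{dist}(\mu^{1,N}_t,\mu_t)\to 0$ in probability. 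Crucially, $\mu^{1,N}_t$ does not involve player~$1$, hence the convergence is \emph{uniform} over the strategy modifications $\widetilde\gamma$, and the Lipschitz assumption \hypref{HypCosts} transports this to the cost functional.

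For the obedient side, a coupling of $(X^{1,N},\mu^{1,N})$ with a realization $(X,\mu)$ of $(\mathfrak{m}_0,\rho,\iota)$ sharing $\mu$, $\Phi_1$ and player~1's noises, together with \hypref{HypPsiContinuity}--\hypref{HypCosts}, delivers the upper bound. For the lower bound, given any $\widetilde\gamma$ and an associated realization, I set $u^N_t:=\widetilde\Phi_1(t,X^{1,N},\mu^{1,N})$; replacing $\mu^{1,N}$ by $\mu$ inside $u^N_t$ and inside the dynamics at the cost of an $o(1)$ error (using again \hypref{HypPsiContinuity}--\hypref{HypCosts} and \Hypref{HypSolutionFlows}, which makes $\mu$ finite-valued so that the extension from $(\mathcal M_N^{\mathcal X})^{T+1}$ to $\mathcal P(\mathcal X)^{T+1}$ is harmless), I check that the resulting control is an admissible open-loop strategy in the sense of Definition~\ref{mf_real_relaxed}, with $\mathcal H_t$ generated by the noises $(\xi^{1,N}_s)_{s\le t}$, the independent randomizations $(\vartheta_s)_{s\le t}$ and the other players' recommendations $(\Phi_j)_{j\neq 1}$; the required independence of $\mathcal H_t$ from $\sigma(\Phi_1,\mu,X^{1,N}_0)$ follows from \textbf{iv)}--\textbf{v)} of Definition~\ref{Def_Real_N} together with \Hypref{HypSolutionCondInd}. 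Proposition~\ref{Prop_equiv_CMFGs} then yields $\widehat J(\mathfrak{m}_0,\rho,(u^N_t)_{t=0}^{T-1})\ge \widehat J(\mathfrak{m}_0,\rho,(u^\iota_t)_{t=0}^{T-1})=J(\mathfrak{m}_0,\rho,\iota)$, and the lower bound follows.

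The main obstacle will be the uniform-in-$\widetilde\gamma$ mean field convergence coupled with the construction of the open-loop MFG control out of an arbitrary progressive deviation: one must simultaneously fit the dependence of $\widetilde\Phi_1$ on the full history $(X^{1,N},\mu^{1,N})$ and on $\vartheta$ into the filtration $\mathcal H_t\vee\sigma(\mu^{(t)})\vee\sigma(\Phi_1)\vee\sigma(X_0)$ of Definition~\ref{mf_real_relaxed}, while ensuring that the extra $\sigma\bigl((\Phi_j)_{j\neq 1}\bigr)$-information does not break the independence condition between $\mathcal H_t$ and $\sigma(\Phi,\mu,X_0)$. This is precisely where \Hypref{HypSolutionCondInd} becomes indispensable, as the factorization $\Phi=\alpha(Z,\mu)$ realizes the dependence of $(\Phi_j)_{j\neq 1}$ on $\mu$ through independent randomizations, thereby preserving the required independence structure in the $N\to\infty$ limit.
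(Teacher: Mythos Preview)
Your plan matches the paper's proof almost exactly: reduction to player~1 by symmetry, the two one-sided estimates against the MFG value $J(m_0,\rho,\iota)$, conditional propagation of chaos via the disintegration $\gamma^N=\int \rho_1(\cdot\mid m)^{\otimes N}\rho_2(dm)$, and the crucial use of Proposition~\ref{Prop_equiv_CMFGs} to read the deviating player's cost as an open-loop MFG cost. Two small corrections are worth flagging. First, in the lower bound the paper does \emph{not} replace $\widetilde\mu^{1,N}$ by $\mu$ inside the control: it keeps $\widetilde u^{1,N}_t=\widetilde\Upsilon_1^N(t,\widetilde X^{1,N},\widetilde\mu^{1,N})$ as is and only swaps $\widetilde\mu^{1,N}$ for $\mu$ in the dynamics (defining the auxiliary process $\overline X^{1,N}$); this avoids the domain issue you raise, since $\widetilde\Phi_1$ is never evaluated outside $(\mathcal M_{N-1}^{\mathcal X})^{T+1}$. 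Second, your $\mathcal H_t$ cannot contain $(\Phi_j)_{j\neq 1}$ directly, because $\Phi_j=\alpha(Z_j,\mu)$ is \emph{not} independent of $\mu$; the paper takes $\mathcal H^N_t=\sigma((X^j_0)_{j\geq 2},(Z_j)_{j\geq 2},(\xi^1_s,\dots,\xi^N_s)_{s\leq t},\vartheta^{(t)})$, i.e.\ the raw randomizations $(Z_j)_{j\neq 1}$ rather than the recommendations, and then the needed independence from $\sigma(\Phi_1,\mu,X^1_0)$ is immediate from \textbf{iv)}. This is exactly the point where \Hypref{HypSolutionCondInd} bites, as you correctly anticipate. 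Finally, your claim that $\mu^{1,N}$ ``does not involve player~1'' and hence the convergence is automatically uniform in $\widetilde\gamma$ is a slight oversimplification: when player~1 deviates, the other players' dynamics react through \emph{their} empirical measures $\widetilde\mu^{j,N}$, so the paper introduces the decoupled system $(\widehat X^{j,N})_{j\geq 2}$ and proves $\text{dist}_T(\widetilde\mu^{j,N},\widehat\mu^{1,N})\to 0$ uniformly in $j$ by a separate induction (Claim~\ref{Claim_conv_hat_mu_1N}).
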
 

\begin{remark}\label{Rem_dec_rho_bis}
	Let  $(Z_j)_{j=1}^N$ be i.i.d. r.v.s distributed according to $\nu$, also independent of $\mu$, and define, for $j \in [\![1,N]\!]$, $\Phi_j$ through $\Phi_j(t, \cdot):=\alpha_t(Z_j, \mu^{(t+1)})(\cdot)$, $t \in [\![0,T-1]\!]$, with $\alpha$ as in \textbf{(R2)}. 
	Then we have $\mathbb{P}\circ(\Phi_1, \dots,\Phi_N)^{-1}=\gamma^N:=\int_{\mathcal{P(X)}^{T+1}}\prod_{j=1}^N \rho_1(\cdot| m)\rho_2(dm)$.
\end{remark}

\begin{proof}
	We prove the result only for strategy modifications  of the first player. 
	Then, the general result is a consequence of the symmetry in the problem. 
	With a small abuse of notation, we simply write $\iota$ for $\iota_\gamma$, when its clear from the context the distribution that it refers to. We also use this same symbol $\iota$ for both the $N$-player game and the mean field game.
	Consider the correlated suggestion $\gamma^N \in \mathcal{P}(\mathcal{R}^N)$ defined in the statement of the theorem. 
	For each $N \in \mathbb{N}$, $\gamma^N$ is  an $\varepsilon_N$-correlated equilibrium for the initial distribution $m_0^{\otimes N}$, once the sequence $\{\varepsilon_N\}_{N \in \mathbb{N}}$ is defined as
	\[
		\varepsilon_N
		:=J_1^N(m_0^{\otimes N}, {\gamma}^N,\iota )
-\inf_{\widetilde \beta^N \in \mathcal N_{\gamma^N_1}}J_1^N(m_0^{\otimes N}, \gamma^N, \widetilde \beta^N), 		\qquad \text{ for all } N \in \mathbb{N}.
	\]
	By definition of infimum, it is possible to find a sequence of strategy modifications, $\{ \widetilde \gamma ^N \}_{N \in \mathbb{N}} \subseteq \mathcal N_{\gamma^N_1}$, such that
	\begin{equation}\label{u_N}
		J_1^N(m_0^{\otimes N}, \gamma^N, \widetilde \gamma^N)\leq \inf_{\widetilde \beta^N \in \mathcal N_{\gamma^N_1}}J_1^N(m_0^{\otimes N},\gamma^N, \widetilde \beta ^N)+\frac{1}{N}, \quad N \in \mathbb{N}.
	\end{equation}
	Thence, to complete the proof of the theorem, so showing that $\varepsilon_N \overset{N \to \infty}{\longrightarrow} 0$, it suffices to prove the following:
	\begin{equation}\label{1)}
		\lim_{N\to \infty} J_1^N(m_0^{\otimes N}, \gamma^N, \iota_{\gamma^N_1})
		=J(m_0, \rho, \iota),
	\end{equation}
	\begin{equation}\label{2)}
		\liminf_{N\to \infty} J_1^N(m_0^{\otimes N},\gamma^N, \widetilde \gamma ^N ) \geq J(m_0,  \rho, \iota).
	\end{equation}

	\renewcommand\qedsymbol{}	
	
 	\begin{proof}[Proof of \eqref{1)}]
		First of all, let us notice that the following equation holds
		\begin{equation}\label{dis_JN}
			J_1^N(m_0^{\otimes N}, {\gamma}^N, \iota_{\gamma^N_1} )
			= \int_{\mathcal{P}(\mathcal{X})^{T+1}} 		J_1^N(m_0^{\otimes N}, {\gamma}^N_m,\iota_{\gamma_{m,1}^N}) \rho_2 (dm) ,
		\end{equation}
		where, for each $N \in \mathbb{N}$ and for each $m \in \mathcal{P}(\mathcal{X})^{T+1}$, $\gamma_m^N:= \otimes ^N \rho_1(\cdot| m)$. 
		In fact, it holds
		$
		\gamma^N_1
		=\gamma^N \circ \pi_1^{-1}
		=\left(\int_{\mathcal{P}(\mathcal{X})^{T+1}} \rho_1(\cdot|m)^{\otimes N}\rho_2(dm) \right) \circ \pi_1^{-1}
		=\int_{\mathcal{P}(\mathcal{X})^{T+1}}  \rho_1(\cdot|m)\rho_2(dm),
		$ and 
		$
		\gamma^N_{m,1}
		=\gamma^N_m \circ \pi_1^{-1}
		= \rho_1(\cdot|m)^{\otimes N} \circ \pi_1^{-1}
		=  \rho_1(\cdot|m).
		$
		\\
		Indeed, thanks to the particular structure of the cost functional and the fact that $\iota_{\gamma^N_1}(d\phi, du)=\delta_{u}(d\phi)\gamma^N_1(du)$,
		we write
		\[
			\begin{split}
				J_1^N&(m_0^{\otimes N}, {\gamma}^N, \iota_{\gamma^N_1})\\&
				=\int_{\mathcal{X}^N}\int_{\mathcal{Z}^{NT}}\int_{ {\mathcal{R}}^{N}} \int_{ {\mathcal R}}
G^N(x,\phi,(u_j)_{j=2}^N,z)
 \delta_{u_1}(d\phi){\gamma}^N(du_1,\ldots,du_N)\nu^{\otimes NT}(dz)m_0^{\otimes N}(dx) \\&
				=\int_{\mathcal{X}^N}\int_{\mathcal{Z}^{NT}}\int_{ {\mathcal{R}}^{N}} \int_{ {\mathcal R}}
G^N(x,(u_j)_{j=1}^N,z)
 {\gamma}^N(du_1,\ldots,du_N)\nu^{\otimes NT}(dz)m_0^{\otimes N}(dx) \\&
				=\int_{\mathcal{X}^N}\int_{\mathcal{Z}^{NT}}\int_{\mathcal{P(X)}^{T+1}}\int_{ {\mathcal{R}}^{N}} 
G^N(x,(u_j)_{j=1}^N,z)
{\gamma}^N_m(du_1,\ldots,du_N)\rho_2(dm)\nu^{\otimes NT}(dz)m_0^{\otimes N}(dx) \\&
				=\int_{\mathcal{P(X)}^{T+1}} 
\Bigg(\int_{\mathcal{X}^N}\int_{\mathcal{Z}^{NT}}\int_{{\mathcal{R}}^{N}} 
G^N(x,(u_j)_{j=1}^N,z)
{\gamma}^N_m(du_1,\ldots,du_N)\nu^{\otimes NT}(dz)m_0^{\otimes N}(dx) \Bigg) \rho_2(dm)\\&
				=\int_{\mathcal{P(X)}^{T+1}} 
\Bigg(\int_{\mathcal{X}^N}\int_{\mathcal{Z}^{NT}}\int_{ {\mathcal{R}}^{N+1}}
G^N(x,\phi,(u_j)_{j=2}^N,z)
\delta_{u_1}(d\phi){\gamma}^N_m(du)\nu^{\otimes NT}(dz)m_0^{\otimes N}(dx)\Bigg)\rho_2(dm)\\&
				= \int_{\mathcal{P}(\mathcal{X})^{T+1}} 
J_1^N(m_0^{\otimes N},{\gamma}^N_m, \iota_{\gamma^N_{m,1}}) \rho_2 (dm).
			\end{split}
		\]
		Notice that, here we have implicitly exploited the conditional independence and independence properties proved in Remark \ref{IInd_cond}, points  ii) and iii).
		Indeed, assume $\mathbb{P}_N\circ (\widetilde \Phi_1,\Phi_1 , \ldots,\Phi_N)^{-1}=\lambda$ and denote with $\lambda_{i}$ the measure projected on it $i^{th}$ component(s), that is $\lambda_{i}:= \lambda \circ (\pi_i)^{-1}$. 
		Let $A, B_1, \ldots, B_N \in \mathcal{B}(\widehat{\mathcal{R}}_N)$. 
		Exploiting Remark  \ref{IInd_cond} (iii), we get
		\[
			\begin{split}
				\mathbb P_N & (\widetilde \Phi_1 \in A,\Phi_1 \in B_1, \ldots,\Phi_N \in B_N)\\
				& = \int_{ \widehat{\mathcal{R}}_N}\boldsymbol{1}_{B_1}(\phi_1)\int_{\widehat{\mathcal{R}}_N \times  {\widehat{\mathcal{R}}_N}^{N-1}}\boldsymbol{1}_{A}(d\psi)\prod_{j=2}^N \boldsymbol{1}_{B_j}(\phi_j)\lambda_{1,3, \dots, N+1}(d\psi,d\phi_2, \ldots,d\phi_N | \phi_1)\lambda_2(d\phi_1)\\&
				= \int_{ \widehat{\mathcal{R}}_N} \boldsymbol{1}_{B_1}(\phi_1)\int_{ \widehat{\mathcal{R}}_N}\boldsymbol{1}_{A}(d\psi) \lambda_1(d\psi | \phi_1) \int_{ {\widehat{\mathcal{R}}_N}^{N-1}}\prod_{j=2}^N \boldsymbol{1}_{B_j}(\phi_j)\lambda_{3,\ldots,N+1}(d\phi_2, \ldots,d\phi_N | \phi_1)\lambda_2(d\phi_1)\\&
				= \int_{ \widehat{\mathcal{R}}_N \times  {\widehat{\mathcal{R}}_N}^{N}}\boldsymbol{1}_{A}(d\psi) \prod_{j=1}^N \boldsymbol{1}_{B_j}(\phi_j)\lambda_1(d\psi | \phi_1)\lambda_{3,\ldots,N+1}(d\phi_2, \ldots,d\phi_N | \phi_1)\lambda_2(d\phi_1)\\&
				= \int_{\widehat{\mathcal{R}}_N \times  {\widehat{\mathcal{R}}_N}^{N}}\boldsymbol{1}_{A}(d\psi) \prod_{j=1}^N \boldsymbol{1}_{B_j}(\phi_j)\delta_{\phi_1}(d\psi)\gamma^N(d\phi_1, \ldots ,d\phi_N),
			\end{split}
		\]
		where the last step is a consequence of the fact that $\mathbb{P}_N\circ (\Phi_1, \ldots, \Phi_N)^{-1}=\gamma^N$ and $\mathbb{P}_N\circ (\widetilde \Phi_1, \Phi_1)^{-1}=\iota_{\gamma^N_i}$.
		Thus, we have $\lambda(d\psi,d\phi_1,\ldots,d\phi_N)=\delta_{\phi_1}(d\psi)\gamma^N(d\phi_1, \ldots ,d\phi_N)$.
		\\ 

		Furthermore, for the mean field  game, we have
		\begin{equation}\label{dis_J}
			J(m_0,{\rho}, \iota)
			= \int_{\mathcal{P}(\mathcal{X})^{T+1}} J(m_0,  \rho_1(\cdot|m) \otimes \delta_m, \iota ) \rho_2 (dm),
		\end{equation}
		where $\rho_1(\cdot|m)  $ as in the statement of the theorem. In fact, with similar computations as above for $J_1^N$, we get
		\[
			\begin{split}
				J(m_0, {\rho}, \iota)
				&=\int_{\mathcal{X}}\int_{\mathcal{Z}^{T}}\int_{{\mathcal{R}}\times  \mathcal P (\mathcal X)^{T+1}} G_\iota(x,\phi,z,m)
\rho(d \phi,dm)\nu^{\otimes T}(dz)m_0(dx) \\&
				=\int_{\mathcal{X}}\int_{\mathcal{Z}^{T}}\int_{ \mathcal P (\mathcal X)^{T+1}} \int_{ {\mathcal{R}}}
G_\iota(x,\phi,z,m)\rho_1(d \phi|m)\rho_2(dm)\nu^{\otimes T}(dz)m_0(dx) \\&
				=\int_{ \mathcal P (\mathcal X)^{T+1}} \int_{\mathcal{X}}\int_{\mathcal{Z}^{T}}  \int_{ {\mathcal{R}}}
G_\iota(x,\phi,z,m)
\rho_1(d \phi|m)\nu^{\otimes T}(dz)m_0(dx) \rho_2(dm)\\&
				=\int_{ \mathcal P (\mathcal X)^{T+1}} J(m_0,\rho_1(\cdot|m)\otimes \delta_m, \iota ) \rho_2(dm),
			\end{split}
		\]
		and this ends the proof of the identity. 

		In the proof of (\ref{1)}), that is the case in which all the players follow the mediator's suggestion, computations simplify considerably.
		Indeed, since the recommendation $\gamma^N$ belongs to $\mathcal{P}(\mathcal{R}^N)$, for any $N \in \mathbb{N}$, we can proceed as in the proof of \cite[Theorem 5.1 and Theorem 6.1]{CF2022}, that is through the following three steps:
		\begin{itemize}
			\item[{1.}] We show that, for any fixed $m \in \PX$, there exists a  subsequence of indeces such that
			\[
				\lim_{k \to \infty} J_1^{N_k}(m_0^{\otimes N_k},\gamma_m^{N_k},\iota)=J(m_0,\rho_m, \iota_{\rho_m}),
			\]
			for some $\rho_m \in \mathcal P(\mathcal{R}\times \PX)$, with $\gamma_m^N=  \rho_1(\cdot|m)^{\otimes N}.$

			\item[{2.}] We prove a result of chaos propagation that enables us to deduce that, in the limit, for all $m \in \PX$, we have
			\[
				\mathbb P_m \circ (X^m_t,\mu^m_t)^{-1}=\widehat{m}^m_t \otimes \delta_{\widehat{m}^m_t }, \quad \text{ for all } t \in [\![0, T]\!],
			\]
			for some $\widehat{m}^m_t \in \mathcal{P(X)}$.

			\item[{3.}] We show that, for $\rho_2$-almost every $m \in \PX$, $(\widehat{m}^m_t)_{t=0}^T= (m_t)_{t=0}^T$, independently of the convergent subsequence considered,  and conclude by integrating in $(m_t)_{t=0}^T \in \PX$ w.r.t. $\rho_2(dm)$.
		\end{itemize}

		\subsubsection*{Step 1}
		Fix a flow of measure $m \in \PX $. 
		Consider the sequence of triples  $\{(m_0^{\otimes N},{\gamma}_m^N, \iota)\}_{N \in \mathbb{N}}$.
		For each $N \in \mathbb{N}$, consider  the  tuple $((\Omega_{N,m}, \mathcal{F}_{N,m},\mathbb{P}_{N,m}),(\Phi_j^{N,m})_{j=1}^N,(\vartheta^{N,m}_t)_{t=0}^{T-1} ,(\xi^{1,N,m}_t,\dots,\xi_t^{N,N,m})_{t=1}^T$, $\widetilde{\Phi}_1^{N,m}$,
		$(X_t^{1,N,m},\dots,X_t^{N,N,m})_{t=0}^T)$, a realization of $(m_0^{\otimes N},{\gamma}_m^N, \iota)$. Since we are proving  (\ref{1)}), w.l.o.g. we assume that  $\Phi_1^{N,m}=\widetilde \Phi_1^{N,m}$, $\mathbb P_{N,m}$-a.s.
		Set, for any $N \in \mathbb{N}$,
		\[
			\eta^N_m:=\mathbb{P}_{N,m} \circ (\Phi_1^{N,m}, (\mu_t^{1,N,m})_{t=0}^T,(\xi_t^{1,N,m})_{t=1}^T,\widetilde \Phi_1^{N,m}, (X_t^{1,N,m})_{t=0}^T)^{-1}.
		\]
		Since, for any $N \in \mathbb{N}$, $\eta^N_m$ belongs to the compact set $\mathcal{P}(\mathcal{R}\times \PX \times \mathcal{Z}^T \times \mathcal{R} \times \mathcal{X}^{T+1})$, the sequence $\{ \eta^N_m \}_{N \in \mathbb{N}}$ admits a convergent subsequence, $\{ \eta^{N_k}_m \}_{k \in \mathbb{N}}$, with limit $\eta_m$.
		On a suitable probability space, $(\Omega_m, \mathcal{F}_m, \mathbb{P}_m)$, we consider a $\mathcal{R}\times \PX \times \mathcal{Z}^T \times \mathcal{R} \times \mathcal{X}^{T+1}$- valued random vector,  $(\Phi^m, (\mu_t^m)_{t=0}^T, (\xi_t^m)_{t=1}^T, \widetilde  \Phi^m, (X^m_t)_{t=0}^T),$  such that 
		\begin{equation}
			\eta_m
			:=\mathbb{P}_m \circ (\Phi^m, (\mu_t^m)_{t=0}^T, (\xi_t^m)_{t=1}^T, \widetilde  \Phi^m, (X^m_t)_{t=0}^T)^{-1}
		\end{equation}
		and set
		\begin{equation}
			\rho_m
			:=\mathbb{P}_m \circ (\Phi^m, (\mu^m_t)_{t=0}^T)^{-1} \in \mathcal{P}(\mathcal{R} \times \PX),
		\end{equation}
		\begin{equation}
			\beta_m
			:=\mathbb{P}_m \circ (\widetilde \Phi^m, \Phi^m, (\mu^m_t)_{t=0}^T)^{-1} \in \mathcal{P}(\mathcal{R} \times\mathcal{R}\times \PX).
		\end{equation}
		Then, the  \emph{limit variables}, $(\Phi^m, (\mu_t^m)_{t=0}^T, (\xi_t^m)_{t=1}^T, \widetilde  \Phi^m, (X^m_t)_{t=0}^T)$, satisfy the following properties:
		\begin{itemize}
			\item[\textbf{i)}] By the continuous mapping theorem and the fact that, by hypothesis, $X_0^{1,N,m}\overset{d}{\sim}m_0$, for all $N \in \mathbb{N}$, we get
			\[
				\mathbb{P}_m \circ (X^m_0)^{-1}=m_0.
			\]

			\item[\textbf{ii)}] $\rho_m=\mathbb{P}_m \circ (\Phi^m, (\mu^m_t)_{t=0}^T)^{-1} \in \mathcal{P}(\mathcal{R} \times \PX),$ by definition.

			\item[\textbf{iii)}] As a consequence of the independence of the variables $(\xi_t^{1,N,m})_{t=1}^T, X_0^{1,N,m}$,  and $(\Phi_1^{N,m},\widetilde \Phi_1^{N,m})$ and the fact that they jointly converge  in distribution, together with the continuous mapping theorem and the fact that $\xi_t^{1,N,m}\overset{d}{\sim} \nu$, $N \in \mathbb{N}$,  $t \in [\![1,T]\!]$, we have
			\[
				\xi^m_t\overset{d}{\sim} \nu, \qquad \text{ for any }t \in [\![1,T]\!].
			\]

			\item[\textbf{iv)}] Since, for any $N \in \mathbb{N}$, $\Phi_1^{N,m}=\widetilde\Phi_1^{N,m}$, $\mathbb{P}_{N,m}$-a.s., we get $\Phi^m=\widetilde \Phi^m$, $\mathbb{P}_m$-a.s. Then,  we have $\widetilde\Phi^m(t,x^{(t)},m^{(t)})=\Phi^m(t,x^{(t)},m^{(t)})=\iota_t(\Phi^m,x^{(t)},m^{(t)})$.\\
			Furthermore, since $(\Phi_j^{N_k,m})_{j =1}^{N_k}$, $\widetilde \Phi_1^{N_k,m}$, as well as $\Phi^m$ and $\widetilde \Phi^m$, are $\mathcal{R}$-valued variables, reasoning as in the Step 3 of the proof of \cite[Theorem 5.1]{CF2022}, we get that 
$(\xi^m_t)_{t=1}^T$, $X^m_0$ and $(\Phi^m,  (\mu^m_t)_{t=0}^T)$ are independent.

			\item[\textbf{v)}] Furthermore, proceeding as in the Step 3. of the proof of \cite[Theorem 5.1]{CF2022}, it is possible to prove that $(X^m_t)_{t=0}^T$ follows the dynamics:
  for any $t \in [\![0,T-1]\!],$
			\begin{equation}\label{din_Xm}
					\begin{split}
						X^m_{t+1}&=\Psi\left(t,X^m_t,\Phi^m\left(t,X^{m,(t)},\mu^{m,(t)}\right),\xi^m_{t+1}\right), \qquad \mathbb{P}_m\text{-a.s.}\\&
						=\Psi\left(t,X^m_t,\Phi^m\left(t,X^m_t \right),\xi^m_{t+1}\right).
					\end{split}
				\end{equation}
			
		\end{itemize}
		
		These features correspond to properties \textbf{ i)-v)} in Definition \ref{mf_real}.
		We have  proved  that the tuple $((\Omega_m, \mathcal{F}_m, \mathbb{P}_m),\Phi^m, (\mu_t^m)_{t=0}^T, X^m_0, (\xi_t^m)_{t=1}^T, \widetilde  \Phi^m, (X^m_t)_{t=0}^T)$ is a realization of the triple $(m_0, \rho_m, \iota)$. 
		\\
		Furthermore, since, for any $N \in \mathbb N$, $\mathbb P_{N,m} \circ (\Phi_1^{N,m})^{-1}=\rho_1(\cdot|m)$, we get $\mathbb P_m \circ (\Phi^m)^{-1}=\rho_1(\cdot|m)$.\\
		Furthermore, we have 
		\begin{equation}\label{limit_id}
			\lim_{k \to \infty}J_1^{N_k}(m_0^{\otimes N_k},{\gamma}_m^{N_k},\iota)=J(m_0, {\rho}_m, \iota).
		\end{equation}
		Equation (\ref{limit_id}) follows from the joint convergence in distribution of the variables that form a realization together with hypothesis \hypref{HypCosts} and the dominated convergence theorem. Notice that, here, the fact that $\Phi_1^{N,m}$, as well as $\Phi_m$, are $\mathcal R$-valued is crucial.		
		
		\subsubsection*{Step 2}
		The symmetry and independence among the players in the prelimit game  enable us to prove a result of chaos propagation for the convergent subsequence associated to $(m_0^{\otimes N},\gamma_m^N, \iota)$.
		We are not going to show this property directly but exploiting an equivalent characterization of  propagation of chaos, namely  \cite[Theorem 4.2]{Gottlieb}(see Theorem \ref{Gott}, in the Appendix).
		\\
		In fact, we can work iteratively to show that chaos propagates from $t$ to $t+1$ for each $t \in [\![0, T-1]\!].$ 
		This fact implies
		\begin{equation}\label{chaos_p} 
			\mathbb{P}_m\circ (X^m_t, \mu^m_t)^{-1}= \widehat{m}^m_t \otimes \delta_{\widehat{m}^m_t}, \qquad t \in[\![0, T]\!].
		\end{equation}
		for some deterministic flow of measures $(\widehat{m}^m_t)_{t=0}^T \in \mathcal{P(X)}^{T+1}$, with $\widehat{m}^m_0={m}_0$.
		\\
		We show  that propagation of chaos holds, for our specific structure, for the first time step. 
		This same reasoning can be immediately extended to the other time steps implying our thesis.  
		We notice that this is possible only because the variables $\{\Phi^{N,m}_j\}_{j=1}^N$ take values \hbox{in $\mathcal{R}$}.
		For a detailed proof see Appendix \ref{SectAppendix}.	
		\\	
		
		Reframing the result (\ref{chaos_p}) of chaos propagation  in the dynamics described in Equation (\ref{din_Xm}), we get, $\mathbb{P}_m$-a.s.,
		\begin{equation}\label{eqq}
			\left \{ 
				\begin{array}{ll}
					X^m_{t+1}
					=\Psi\left(t,X^m_t,\widehat m^m_t,\Phi^m\left(t,X^m_t\right),\xi^m_{t+1}\right),
					\\
					\mathbb{P}_m \circ (X^m_t)^{-1}
					= \widehat{m}^m_t, \qquad t \in[\![0, T]\!].
				\end{array}
			\right.
		\end{equation}
		Notice that, in the variable $\Phi^m$, we are omitting the dependence on the measure $\widehat{m}$. We are allowed to do this because this variable takes values in $\mathcal{R}$, being distributed according to $\rho_1(\cdot|m)$.
		The system in (\ref{eqq}) has a unique solution.
		It is a consequence of the iterative definition of the process $(X^m_t)_{t=0}^T$ and of properties  \textbf{i)-v)} of the limit realization. 
		Thence, the flow of measures $(\widehat m^m_t)_{t=0}^T \in \PX$, corresponding to this system, is uniquely determined for each $(m_t)_{t=0}^T \in \PX.$

		\subsubsection*{Step 3}

		Now, our aim is to prove that $(\widehat m^m_t)_{t=0}^T = (m_t)_{t=0}^T$, for $\rho_2$-almost all $(m_t)_{t=0}^T \in \PX$.
		Let $\rho$ be the correlated solution for the mean field game starting at $m_0$, as  in the statement of the theorem, and consider a realization of $(m_0,{\rho}, \iota)$, i.e.  $\big((\Omega^*, \mathcal{F}^*,\mathbb{P}^*),\Phi^*,(\mu^*_t)_{t=0}^T,X^*_0,(\xi^*_t)_{t=1}^T, \iota, (X_t^*)_{t=0}^T \big)$. 
		By definition of realization, such a tuple satisfies properties \textbf{i)-v)} in Definition \ref{mf_real}. 
		In particular, without loss of generality, we set
		\begin{itemize}
			\item[\textbf{i)}] $\mathbb{P}^*\circ (X^*_0)^{-1}= m_0;$

			\item[\textbf{ii)}] $\mathbb{P}^*\circ (\Phi^*, (\mu^*_t)_{t=0}^T)^{-1}=\rho$;

			\item[\textbf{iii)}] $(\xi^*_t)_{t=1}^T$ i.i.d. with $\xi^*_t\overset{d}{\sim}\nu$;
	
			\item[\textbf{iv)}] $(\xi^*_t)_{t=1}^T, X^*_0$ and $(\Phi^*, (\mu^*_t)_{t=0}^T)$ independent;
 
			\item[\textbf{iv')}] $ \iota(\Phi^*)=\Phi^*,$ $ \mathbb{P}^*$-a.s.;

			\item[\textbf{v)}] for all $t \in [\![0,T-1]\!]$,
			\[
				X^*_{t+1}
				=\Psi\left(t,X^*_{t},\mu^*_{t},\iota \circ\Phi^*\left(t,X^*,\mu^*\right),\xi^*_{t+1}\right)
				=\Psi\left(t,X^*_{t},\mu^*_{t},\Phi^*\left(t,X^*_t\right),\xi^*_{t+1}\right), \quad \mathbb{P}^*\text{-a.s.}
			\]
		\end{itemize}
		The fact that $\rho$ is a correlated solution for the mean field game (consistency condition) and the definition of $\rho_1(\cdot|m)$ imply, respectively, that, for $\rho_2$-almost all $m \in \PX$, we have:
		\begin{itemize}
			\item $\mathbb{P}^*\left( X^*_{t}\in \cdot \big|(\mu^*_t)_{t=0}^T= (m_t)_{t=0}^T\right)=m_t,\qquad t \in [\![0,T]\!];$

			\item $\mathbb{P}^*\left( \Phi^*\in \cdot \big|(\mu^*_t)_{t=0}^T= (m_t)_{t=0}^T\right)=\rho_1(\cdot|m)$;
		
		\end{itemize}
		Then, setting $\mathbb{Q}^m(\cdot)=\mathbb{P}^*(\cdot|(\mu^*_t)_{t=0}^T= (m_t)_{t=0}^T)$, we get:
		\begin{itemize}
			\item $\mathbb{Q}^m \circ (X^*_{t})^{-1}=m_t,\qquad  t \in [\![0,T]\!];$

			\item $\mathbb{Q}^m \circ(\Phi^*)^{-1}=\rho_1(\cdot|m);$

			\item $\mathbb{Q}^m \circ (\xi^*_t)^{-1}=\nu,\qquad  t \in [\![1,T]\!];$ 
		\end{itemize}
	where the last item is a consequence of the fact that $(\xi^*_t)_{t=1}^T$ is jointly independent of $(\mu^*_t)_{t=0}^T$, by property \textbf{iv)} above.
	Hence, for $\rho_2$-almost all $(m_t)_{t=0}^T \in \PX$, $\mathbb{Q}^m$-almost surely, for any $t \in [\![0,T-1]\!],$ we have
		\begin{equation}\label{eqqq}
			\left \{ 
				\begin{array}{ll}
					X^*_{t+1}
					=\Psi\left(t,X^*_{t}, m_t, \Phi^*\left(t,X^*_{t}\right),\xi^*_{t+1}\right)
					\\
					\mathbb{Q}^m \circ ({X}^*_t)^{-1}
					= m_t, \qquad t \in[\![0, T]\!].
				\end{array}
			\right.
		\end{equation}  
		This means that the tuple $\big((\Omega^*, \mathcal{F}^*,\mathbb{Q}^m),\Phi^*,(\mu^*_t)_{t=0}^T,X^*_0,(\xi^*_t)_{t=1}^T, (X_t^*)_{t=0}^T \big)$ is a solution for the system (\ref{eqq}).
		Finally, exploiting the uniqueness of solution for this  			system, we obtain the following identities,	that hold  for $\rho_2$-almost all $(m_t)_{t=0}^T \in \PX$:
		\[
			(\widehat m_t^m)_{t=0}^T=(m_t)_{t=0}^T, 
			\qquad 
			\rho_m=\mathbb{P}_m\circ (\Phi^m, \mu_m)^{-1}= \mathbb Q^m \circ (\Phi^*, m)^{-1}=\rho_1(\cdot|m) \otimes \delta_m.
		\]
		Notice that the second equation is a consequence of the fact that $\mathbb{P}_m \circ (\Phi^m)^{-1}=\rho_1(\cdot|m).$
		In particular, we rewrite the equation in \eqref{limit_id} as
		\begin{equation}
			\lim_{k \to \infty}J_1^{N_k}(m_0^{\otimes N_k},  \gamma_m^{N_k}, \iota)
			=J(m_0, \rho_1(\cdot|m) \otimes \delta_m, \iota).
		\end{equation}
		Notice that the limit above does not depend on the subsequence considered and so we can deduce that the whole sequence converges to this limit.	
		Now, an application of the dominated convergence theorem, together with the identities  (\ref{dis_JN}) and (\ref{dis_J}), yields
		\[
			\begin{split}
				\lim_{N\to \infty}J_1^N(m_0^{\otimes N}, \gamma^{N}, \iota)&
				=\lim_{N\to \infty}\int_{\PX}J_1^N(m_0^{\otimes N}, \gamma_m^{N},\iota)\rho_2(dm)\\&
				=\int_{\PX}\lim_{N\to \infty} J_1^N(m_0^{\otimes N}, \gamma_m^{N},\iota)\rho_2(dm)\\&
				=\int_{\PX}J(m_0,  \rho_1(\cdot|m) \otimes \delta_m,\iota )\rho_2(dm)
				=J(m_0, \rho, \iota).
			\end{split}
		\]
		This ends the proof of (\ref{1)}).
	
	\end{proof}
	

	\begin{proof}[Proof of \eqref{2)}]

		Consider the minimizing sequence of  strategy modifications $\{\widetilde \gamma^N\}_{N \in \mathbb N}\subseteq \mathcal N_{\gamma^N_1} $, defined in \eqref{u_N}. Now, set 
		\begin{equation}
		\begin{split}
		\overline{\overline{\gamma}}_N(d\varphi_1,\dots,d\varphi_N,dm)
		:&=\left(\bigotimes_{j=1}^{N} \rho_1(d\varphi_j|m) \right) \rho_2(dm)\\
		&=\left(\bigotimes_{j=1}^{N} \left( \int_{\mathcal{Z}}\bigotimes_{t=0}^{T-1}\delta_{\alpha_t(z_j,m^{(t+1)})}(d\varphi_j(t, \cdot))\nu(dz_j)\right) \right) \rho_2(dm)\\
		&=\gamma_m^N(d\varphi_1,\dots,d\varphi_N)\rho_2(dm) \in \mathcal{P}(\mathcal{R}^N\times \PX ),
		\end{split}
		\end{equation}
		where $\alpha_t$ has been chosen according to \textbf{(R2)}, see Remark \ref{Rem_dec_rho} and \ref{Rem_dec_rho_bis}.
		The peculiar form of the starting MFG solution $\rho$ that satisfies assumption \Hypref{HypSolutionCondInd}, and the consequent form of the correlated suggestion in the $N$-player game, will be crucial to give an interpretation to any $N$-player game realization in the mean-field sense.
		Now, we want to build a sequence  of realizations of 
 $\{(m_0^{\otimes N},\gamma^N, \widetilde \gamma^N)\}_{N \in \mathbb{N}}$.
 		For a fixed $N \in \mathbb{N}$, let $( \Omega_N,\mathcal{F}_N,\mathbb{P}_N)$ be  a complete probability space.
		On this probability space, we set:
		\begin{itemize}

			\item[\textbf{i)}]  $(X^j_0)_{j=1}^N$,  $\mathcal{X}$-valued random variables  i.i.d. according to $m_0$;
			\\
			$(\Phi_j)_{j=1}^N,$  $\mathcal R$-valued random variables, such that
			\[
			\Phi_j(t,\cdot):=\alpha_t(Z_j,\mu^{(t+1)}), \qquad j=1,\dots, N, \quad t =0,\dots, T-1,
			\]
			with $(Z_j)_{j=1}^N$ i.i.d. $\overset{d}{\sim}\nu$ and independent of $\mu\overset{d}{\sim}\rho_2$.\\
			In particular, this implies that $\mathbb P_N \circ ((\Phi_j)_{j=1}^N, \mu)^{-1}(d\phi_1,\ldots,d\phi_N,dm)
			=\overline{\overline{\gamma}}_N(d\phi_1,\ldots,d\phi_N,dm)$ and so that $\mathbb P_N\circ ((\Phi_j)_{j=1}^N)^{-1}(d\phi_1,\ldots,d\phi_N)
			=\gamma_N(d\phi_1,\ldots,d\phi_N)$;

			\item[\textbf{ii)}]  $(\xi^1_t,\dots,\xi^N_t)_{t=0}^T$,    $\mathcal{Z}$-valued random variables  i.i.d. all distributed according to $\nu;$

			\item[\textbf{iii)}]  $(\vartheta_t)_{t=0}^T$, $\mathcal{Z}$-valued random variables  i.i.d. all distributed according to $\nu;$

			\item[\textbf{iv)}]  $(\xi^1_t,\dots,\xi^N_t)_{t=0}^T$, $(X^j_0)_{j=1}^N$, $((\mu_t)_{t=0}^T, (Z_j)_{j=1}^N)$ and $(\vartheta_t)_{t=0}^{T-1}$ are independent;

			\item[\textbf{v)}]  $\widetilde \Upsilon_1^N$, $\UU$-valued random variable s.t. $\widetilde \Upsilon_1^N(t,\cdot)= w_t^N(\vartheta_t,\Phi_1)(\cdot)$, with $w_t^N: [0,1]\times \mathcal{R} \to \widehat{\mathcal{E}}_{t,N}$ Borel function, for any $t \in [\![0,T-1]\!]$, and $\mathbb{P}_N\circ (\Phi_1,\widetilde \Upsilon_1^N)^{-1}=\widetilde \gamma_N$\footnote{The existence of these Borel functions is a consequence of the measurability condition in \textbf{v)} in Definition \ref{Def_Real_N} and of Doob's Lemma (see \cite[Lemma 1.13]{Kallenberg}).}.
		\end{itemize}
		We set the following dynamics for the $\mathcal X$-valued processes, $(X^{j,N}_t)_{t=0}^T$, $j\in [\![1,N]\!]$, for $t \in [\![0,T-1]\!]$,
		\begin{equation}\label{Eq_X_j}
			\begin{split}
				X^{j,N}_{t+1}= \Psi\left(t,X^{j,N}_t, \mu^{j,N}_t,\Phi_j(t,X^j_t),\xi^j_{t+1}\right),\quad \mathbb{P}_N\text{-a.s.},
			\end{split}
		\end{equation}
		where, for all $t \in [\![0,T]\!]$ and $j\in [\![1,N]\!]$, $\mu^{j,N}_t:=\frac{1}{N-1}\sum_{k\neq j,k =1}^N \delta_{X^{k,N}_t}$ and $\mu^{j,N}:=(\mu^{j,N}_t)_{t=0}^T$.
		This corresponds to the case where all the players stick to the suggestion given by the mediator.
		\\
		Then, we define another sequence of processes, $(\widetilde X^{j,N}_t)_{t=0}^T$, $j\in [\![1,N]\!]$, setting $\widetilde X^{j,N}_0 : =X^j_0$ and, for $t \in [\![0,T-1]\!]$,
		\begin{align}\label{Eq_tilde_X_j}
		& \widetilde X^{j,N}_{t+1}= \Psi\left(t, \widetilde X^{j,N}_t, \widetilde\mu^{j,N}_t,\Phi_j(t,\widetilde X^{j,N}_t),\xi^j_{t+1}\right), \\ \nonumber
		& \widetilde X^{1,N}_{t+1}= \Psi\left(t,\widetilde X^{1,N}_t, \widetilde \mu^{1,N}_t, \widetilde \Upsilon^{1,N}(t,\widetilde X^{1,N}, \widetilde \mu^{1,N}),\xi^1_{t+1}\right), \quad \mathbb{P}_N\text{-a.s.},
		\end{align}
		where, for all $t \in [\![0,T]\!]$ and $j\in [\![1,N]\!]$, $\widetilde \mu^{j,N}_t:=\frac{1}{N-1}\sum_{k\neq j,k =1}^N \delta_{\widetilde X^{k,N}_t}$ and $\widetilde \mu^{j,N}:=(\widetilde \mu^{j,N}_t)_{t=0}^T$.
		This represents the case in which only the first player is deviating from the suggestion according to the minimizing sequence of strategy modifications introduced in the beginning of the proof.
		Hence, $(({\Omega}_N, {\mathcal F}_N,  {\mathbb P}_N),(\Phi_j)_{j=1}^N$,  $(\vartheta_t)_{t=0}^{T-1},  (\xi^1_t,\dots,\xi^N_t)_{t=1}^T, \Phi_1,(X^{1,N}_t,\dots,X^{N,N}_t)_{t=0}^T)$ and  $(({\Omega}_N, {\mathcal F}_N,  {\mathbb P}_N)$, $(\Phi_j)_{j=1}^N$, $ (\vartheta_t)_{t=0}^{T-1},  (\xi^1_t,\dots,\xi^N_t)_{t=1}^T$, $\widetilde \Upsilon_1^N,(\widetilde X^{1,N}_t,\dots,\widetilde X^{N,N}_t)_{t=0}^T)$  are, respectively, a realization of the triple $(m_0^{\otimes N}, \gamma^N, \iota)$ and  $(m_0^{\otimes N}, \gamma^N, \widetilde \gamma^N )$ for the first player.
		Indeed, we notice that, with this construction, also   condition \textbf{v)} in Definition \ref{Def_Real_N} is satisfied.
		\\
		Then, we define another sequence of processes, $( \widehat X^{j,N}_t)_{t=0}^T$, $j\in [\![2,N]\!]$, setting $\widehat X^{j,N}_0 : =X^j_0$ and, for $t \in [\![0,T-1]\!]$,
		\begin{align}\label{Eq_hat_X_j}
		& \widehat X^{j,N}_{t+1}= \Psi\left(t, \widehat X^{j,N}_t, \widehat\mu^{1,N}_t,\Phi_j(t,\widehat X^{j,N}_t),\xi^j_{t+1}\right),\quad \mathbb{P}_N\text{-a.s.},
		\end{align}
		where, for all $t \in [\![0,T]\!]$, $\widehat \mu^{1,N}_t:=\frac{1}{N-1}\sum_{k =2}^N \delta_{\widehat X^{k,N}_t}$ and $\widehat \mu^{1,N}:=(\widehat \mu^{1,N}_t)_{t=0}^T$.
		These processes describe the evolution of the system excluding the first player.
		\\
		Finally, we define the process, $( \overline X^{1,N}_t)_{t=0}^T$, setting $\overline X^{1,N}_0 : =X^1_0$ and, for $t \in [\![0,T-1]\!]$,
		\begin{align}\label{Eq_bar_X_j}
		& \overline X^{1,N}_{t+1}= \Psi\left(t, \overline X^{1,N}_t, \mu_t,\widetilde \Upsilon^{1,N}(t,\widetilde X^{1,N}, \widetilde \mu^{1,N}),\xi^1_{t+1}\right),\quad \mathbb{P}_N\text{-a.s.}.
		\end{align}
		This last one is an auxiliary process whose utility will be made clear in the following.
		From now on, for simplicity of notation, for $t \in [\![0,T-1]\!]$, we write $\widetilde u^{1,N}_t$ for $\widetilde \Upsilon^{1,N}(t,\widetilde X^{1,N}, \widetilde \mu^{1,N})$.
		\\
		First of all, we focus on $(({\Omega}_N, {\mathcal F}_N,  {\mathbb P}_N),\Phi_1, (\mu_t)_{t=0}^T, (\mu^{1,N}_t)_{t=0}^T, (\vartheta_t)_{t=0}^{T-1}, (\xi^{1}_t)_{t=1}^T,(X_t^{1,N})_{t=0}^T)$. 
		For all $t \in [\![0,T]\!]$, an application of the tower property yields 
		\[
		\mathbb{E}_{N}[\text{dist}(\mu^{1,N}_t,\mu_t)] 
		=\int_{\mathcal{P}(\mathcal{X})^{T+1}}\mathbb{E}_{N}[\text{dist}(\mu^{1,N}_t,\mu_t)|\mu=m] \rho_2(dm).
		\]
		Conditionally on the event $\{(\mu_t)_{t=0}^T=(m_t)_{t=0}^T\}$, we have already seen that $(\mu^{1,N}_t)_{t=0}^T$ converges weakly to $(m_t)_{t=0}^T$, as $N$ goes to infinity. 
		Since $(m_t)_{t=0}^T \in \PX$ is deterministic, the convergence result above holds in probability, that is, for any fixed $\epsilon > 0$, $\mathbb{P}_N^m(\text{dist}(\mu_t^{1,N},\mu_t)>\epsilon)\overset{N \to \infty }{\longrightarrow}0$. Then, we have
			\[
			\begin{split}
			\mathbb{E}_N^m[\text{dist}(\mu_t^{1,N},\mu_t)]&
			\leq \mathbb{P}_N^m(\text{dist}(\mu_t^{1,N},\mu_t)>\epsilon) + \epsilon\mathbb{P}_N^m(\text{dist}(\mu_t^{1,N},\mu_t)\leq\epsilon)\\&
			\leq \mathbb{P}_N^m(\text{dist}(\mu_t^{1,N},\mu_t)>\epsilon) + \epsilon \overset{N \to \infty }{\longrightarrow} \epsilon,
			\end{split}			
			\]
			and we obtain by the arbitrariness of $\epsilon >0$ that $\mathbb{E}_N^m[\text{dist}(\mu_t^{1,N},\mu_t)]\overset{N \to \infty }{\longrightarrow}0$, for any $t \in [\![0,T]\!]$.
			Finally, by disintegration, an application of the dominated convergence theorem yields
			\begin{equation*}
				\lim_{N\to \infty}\mathbb{E}_{N}[\text{dist}(\mu^{1,N}_t,\mu_t)] 
				=\int_{\mathcal{P}(\mathcal{X})^{T+1}}\lim_{N\to \infty}\mathbb{E}_{N}[\text{dist}(\mu^{1,N}_t,\mu_t)|\mu=m] \rho_2(dm)
				=0, \text{ for all }t \in [\![0,T]\!],
			\end{equation*}	
			and consequently
			\begin{equation}\label{Eq_conv_mu_N}
				\lim_{N\to \infty}\mathbb{E}_{N}[\text{dist}_T(\mu^{1,N},\mu)]=0.
			\end{equation}
			Now, we  prove the following claim.
			\begin{claim}\label{Claim_conv_hat_mu_1N}
			For any $\lambda=(\lambda_t)_{t=0}^T \in  \{\widetilde \mu_t^{j,N}, j\in [\![1,N]\!] \} \cup \{\mu_t^{j,N}, j\in [\![1,N]\!] \}$
			\begin{equation}\label{conv_mu_N_1}
				\lim_{N \to \infty}\mathbb{E}\left[ \text{\emph{dist}}_T(\lambda,\widehat{\mu}^{1,N})\right]=0.
			\end{equation}
			\end{claim}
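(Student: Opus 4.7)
The plan is to proceed by induction on $t \in [\![0,T]\!]$, first reducing the general statement to the two special cases $\lambda = \mu^{1,N}$ and $\lambda = \widetilde\mu^{1,N}$. Indeed, two empirical measures of $N-1$ particles that differ by a single-particle swap have total variation distance at most $1/(N-1)$, so $\text{dist}(\mu^{1,N}_t, \mu^{j,N}_t) \leq 1/(N-1)$ (and likewise in the tilde system) for every $j$. By the triangle inequality and summation over the finite horizon, it will thus be enough to prove, for each fixed $t \in [\![0,T]\!]$, the two convergences $\mathbb{E}_N[\text{dist}(\mu^{1,N}_t, \widehat\mu^{1,N}_t)] \to 0$ and $\mathbb{E}_N[\text{dist}(\widetilde\mu^{1,N}_t, \widehat\mu^{1,N}_t)] \to 0$.

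For the first of these I would set $p^N_t := \mathbb{P}_N(X^{2,N}_t \neq \widehat X^{2,N}_t)$ and exploit the exchangeability of players $2, \dots, N$ in both the $X^{\cdot,N}$ and $\widehat X^{\cdot,N}$ systems (same initial law, same noise, same suggestion law obtained by integrating $\rho_1(\cdot|\mu)$): this gives $\mathbb{P}_N(X^{k,N}_t \neq \widehat X^{k,N}_t) = p^N_t$ for every $k \geq 2$, so that
\[
\mathbb{E}_N[\text{dist}(\mu^{1,N}_t, \widehat\mu^{1,N}_t)] \leq \frac{1}{N-1}\sum_{k=2}^N \mathbb{P}_N(X^{k,N}_t \neq \widehat X^{k,N}_t) = p^N_t.
\]
To show $p^N_t \to 0$ I would induct on $t$: the base case $p^N_0 = 0$ holds by construction. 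For the inductive step, conditioning on the natural past filtration $\mathcal{F}_t^N$ together with $\{X^{2,N}_t = \widehat X^{2,N}_t\}$, both dynamics share state and control $\Phi_2(t,\cdot)$, and part (1) of \hypref{HypPsiContinuity} applied to the $\nu$-distributed noise $\xi^2_{t+1}$ yields
\[
\mathbb{P}_N\bigl(X^{2,N}_{t+1} \neq \widehat X^{2,N}_{t+1} \bigm| \mathcal{F}_t^N, X^{2,N}_t = \widehat X^{2,N}_t\bigr) \leq w\bigl(\text{dist}(\mu^{2,N}_t, \widehat\mu^{1,N}_t)\bigr).
\]
Combining with the single-particle bound $\text{dist}(\mu^{2,N}_t, \mu^{1,N}_t) \leq 1/(N-1)$ and the induction hypothesis gives $\text{dist}(\mu^{2,N}_t, \widehat\mu^{1,N}_t) \to 0$ in $\mathbb{P}_N$-probability and hence
\[
p^N_{t+1} \leq p^N_t + \mathbb{E}_N\bigl[w\bigl(\text{dist}(\mu^{2,N}_t, \widehat\mu^{1,N}_t)\bigr)\bigr].
\]

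The case $\lambda = \widetilde\mu^{1,N}$ will be handled analogously with $\widetilde p^N_t := \mathbb{P}_N(\widetilde X^{2,N}_t \neq \widehat X^{2,N}_t)$; the only extra bookkeeping is that $\widetilde\mu^{2,N}_t$ and $\widehat\mu^{1,N}_t$ further differ through the swap $1 \leftrightarrow 2$, contributing an additional $O(1/N)$ to the total variation. The step I expect to be the main obstacle is converting convergence in probability of $\text{dist}(\mu^{2,N}_t, \widehat\mu^{1,N}_t)$ into $\mathbb{E}_N[w(\cdot)] \to 0$: since $w$ is only assumed non-decreasing with $w(0+) = 0$, and need not be concave or continuous, one cannot push the expectation inside $w$. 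I would handle this by a standard truncation: for any given $\varepsilon > 0$, pick $\delta > 0$ with $w(s) \leq \varepsilon$ for $s \leq \delta$ and use $w \leq 1$ to bound $\mathbb{E}_N[w(\text{dist})] \leq \varepsilon + \mathbb{P}_N(\text{dist} > \delta)$, where the probability vanishes by Markov's inequality together with the preceding step. This closes the induction, gives $p^N_t, \widetilde p^N_t \to 0$ for every $t \in [\![0,T]\!]$, and by summation and the triangle inequality yields the claim.
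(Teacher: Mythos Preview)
Your proposal is correct and follows essentially the same route as the paper: induction on $t$, the one-step bound coming from part~1) of \hypref{HypPsiContinuity}, and the same truncation-plus-Markov argument to pass from convergence in probability of the distance to $\mathbb{E}[w(\cdot)]\to 0$. The only difference is that you invoke exchangeability of players $2,\dots,N$ to collapse $\frac{1}{N-1}\sum_{l=2}^N\mathbb{P}_N(X^{l,N}_t\neq\widehat X^{l,N}_t)$ to a single representative probability $p^N_t$, whereas the paper carries the average (and a $\max_{j}$) through the induction; this is a cosmetic simplification, not a genuinely different argument.
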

			
			\begin{proof}[Proof of Claim \ref{Claim_conv_hat_mu_1N}]
			We prove the claim for $\lambda = \widetilde\mu^{j,N}$, the proof for $\lambda =\mu^{j,N}$ being similar.
			Since by definition of $\text{dist}_T$, we have
			\[
				\mathbb{E}\left[ \text{dist}_T(\widetilde{\mu}^{j,N},\widehat{\mu}^{1,N})\right]
				=\mathbb{E}\left[ \sum_{t=0}^T\text{dist}(\widetilde{\mu}^{j,N}_t,\widehat{\mu}^{1,N}_t)\right]
				= \sum_{t=0}^T\mathbb{E}\left[ \text{dist}(\widetilde{\mu}^{j,N}_t,\widehat{\mu}^{1,N}_t)\right],
			\]
			it suffices to prove that, for any $j \in [\!]1,N]\!]$, and any $t \in [\!]0,T]\!]$,
			\begin{equation}\label{conv_mu_N}
				\lim_{N \to \infty}\mathbb{E}\left[ \text{dist}(\widetilde{\mu}^{j,N}_t,\widehat{\mu}^{1,N}_t)\right]=0.
			\end{equation}
			We notice that the definition of the distance dist together with the upper bound for empirical measures in (2.1) in \cite{CF2022} implies that,  for all $j \in [\!]1,N]\!]$, $t \in [\!]0,T]\!]$,
			\begin{equation}\label{stima_hat}
				\mathbb E \left[ \text{dist}(\widehat{\mu}^{1,N}_t,\widetilde{\mu}^{j,N}_t) \right]
\leq \frac{1}{N-1}+ \frac{1}{N-1}\sum_{l=2}^N \mathbb{P}\left(\widetilde{X}^{l,N}_t\neq \widehat{X}^{l,N}_t\right).
			\end{equation}
			In fact, for $j=1$, we have
		 	\[
				\mathbb E \left[ \text{dist}(\widehat{\mu}^{1,N}_t,\widetilde{\mu}^{j,N}_t) \right]
				\overset{(2.1)}{\leq} \mathbb E \left[ \frac{1}{N-1}\sum_{l=2}^N \boldsymbol{1}_{\widetilde{X}^{l,N}_t\neq \widehat{X}^{l,N}_t} \right]
				=\frac{1}{N-1}\sum_{l=2}^N \mathbb{P}\left(\widetilde{X}^{l,N}_t\neq \widehat{X}^{l,N}_t\right).
			\]
			Whereas, for $j \in [\!]2,N]\!]$, we get
 			\[
				\begin{split}
\mathbb E \left[ \text{dist}(\widehat{\mu}^{1,N}_t,\widetilde{\mu}^{j,N}_t) \right]&
				\overset{(2.1)}{\leq} \mathbb E \left[ \frac{1}{N-1}\sum_{l=2,l\neq j}^N \boldsymbol{1}_{\widetilde{X}^{l,N}_t\neq \widehat{X}^{l,N}_t}+\frac{1}{N-1}\boldsymbol{1}_{\widetilde{X}^{1,N}_t\neq \widehat{X}^{j,N}_t} \right]
				\\&
				\leq \mathbb E \left[ \frac{1}{N-1}\sum_{l=2}^N \boldsymbol{1}_{\widetilde{X}^{l,N}_t\neq \widehat{X}^{l,N}_t}+\frac{1}{N-1} \right]
				\\&
				=\frac{1}{N-1}\sum_{l=2}^N \mathbb{P}			\left(\widetilde{X}^{l,N}_t\neq \widehat{X}^{l,N}_t\right)+\frac{1}{N-1}.
				\end{split}
			\]
			Furthermore, we prove that, for all  $t \in [\!]0,T]\!]$, we have the following convergence, as \hbox{$N \to\infty$},
			\begin{equation}\label{tilde_X}
				\lim_{N\to \infty}\frac{1}{N-1} \sum_{j=2}^N \mathbb{P}\left( \widetilde{X}^{j,N}_t\neq \widehat{X}^{j,N}_t  \right)=0.
			\end{equation} 
			In fact, (\ref{tilde_X}), with $t=0$, follows from the fact that, for all $N \in \mathbb N$,
			\[
				\sum_{j=2}^N \mathbb{P}\left( \widetilde{X}^{j,N}_0\neq \widehat{X}^{j,N}_0  \right)=0.
			\]
			which is a consequence of the fact that, by construction, for all $N \in \mathbb N$, $j \in [\![1,N]\!]$,  
			$\widetilde{X}^{j,N}_0={X}^j_0$, $\mathbb{P}_N$-a.s. and for all $N \in \mathbb N$, $j \in [\![2,N]\!]$,  
			$\widehat{X}^{j,N}_0={X}^j_0$, $\mathbb{P}_N$-a.s..
			Then,  we prove  (\ref{tilde_X}) for a generic time, reasoning by induction. 
			Let us assume that  (\ref{tilde_X}) holds for $t$, for all $j \in [\!]2,N]\!]$,  we have
			\[
				\begin{split}
					 \mathbb{P}\left( \widetilde{X}^{j,N}_{t+1}\neq \widehat{X}^{j,N}_{t+1}  \right)&
					= \mathbb{P}\left( \widetilde{X}^{j,N}_{t+1}\neq \widehat{X}^{j,N}_{t+1}, \widetilde{X}^{j,N}_t\neq \widehat{X}^{j,N}_t\right)
					+ \mathbb{P}\left( \widetilde{X}^{j,N}_{t+1}	\neq \widehat{X}^{j,N}_{t+1}, \widetilde{X}^{j,N}_t= \widehat{X}^{j,N}_t  \right)
					\\&
					\leq  \mathbb{P}\left( \widetilde{X}^{j,N}_t\neq \widehat{X}^{j,N}_t  \right)
+ \mathbb{P}\left( \widetilde{X}^{j,N}_{t+1}\neq \widehat{X}^{j,N}_{t+1}, \widetilde{X}^{j,N}_t= \widehat{X}^{j,N}_t  \right)
					=: \star,
				\end{split}
			\]
			where we have exploited disintegration. 
			Using the iterative definition of the processes $(\widetilde{X}^{j,N}_t)_{t =0}^T$ and $(\widehat{X}^{j,N}_t)_{t =0}^T$ through $\Psi$ and the fact that $\Phi_j$, by construction, takes values in $\mathcal{R}$ we get
			\begin{align*}
					\star
					&= \mathbb{P}\bigg(  \Psi\left(t,\widetilde{X}^{j,N}_t,\widetilde{\mu}^{j,N}_t, \Phi_j(t,\widetilde{X}^{j,N}_t),\xi^j_{t+1}\right)\neq \Psi\left(t,\widehat{X}^{j,N}_t,\widehat{\mu}^{1,N}_t, \Phi_j(t,\widehat{X}^{j,N}_t),\xi^j_{t+1}\right), \widetilde{X}^{j,N}_t= \widehat{X}^{j,N}_t\bigg)\\
					&\quad +  \mathbb{P}\left( \widetilde{X}^{j,N}_t\neq \widehat{X}^{j,N}_t  \right)
					\\
					&= \mathbb{P}\bigg(  \Psi\left(t,\widehat{X}^{j,N}_t,\widetilde{\mu}^{j,N}_t, \Phi_j(t,\widehat{X}^{j,N}_t),\xi^j_{t+1}\right)\neq \Psi\left(t,\widehat{X}^{j,N}_t,\widehat{\mu}^{1,N}_t, \Phi_j(t,\widehat{X}^{j,N}_t),\xi^j_{t+1}\right), \widetilde{X}^{j,N}_t= \widehat{X}^{j,N}_t\bigg) \\
					&\quad +  \mathbb{P}\left( \widetilde{X}^{j,N}_t\neq \widehat{X}^{j,N}_t  \right)\\
					&=  \mathbb{P}\left( \widetilde{X}^{j,N}_t\neq \widehat{X}^{j,N}_t  \right)+\mathbb{P}\bigg(  \Psi\left(t,\widehat{X}^{j,N}_t,\widetilde{\mu}^{j,N}_t, \Phi_j(t,\widehat{X}^{j,N}_t),\xi^j_{t+1}\right)\neq \Psi\left(t,\widehat{X}^{j,N}_t,\widehat{\mu}^{1,N}_t, \Phi_j(t,\widehat{X}^{j,N}_t),\xi^j_{t+1}\right) \bigg)
			\end{align*}
			Then, an application of Fubini's Theorem, together with the independence properties of 
			\\
			\hbox{$(\xi^j_{t+1})_{j =2}^N$}, yields
			\begin{equation}\label{to_be_con}
				\begin{split}
					\mathbb{P}\left( \widetilde{X}^{j,N}_{t+1}\neq \widehat{X}^{j,N}_{t+1}  \right)&
					=  \mathbb{P}\left( \widetilde{X}^{j,N}_t\neq \widehat{X}^{j,N}_t  \right)+ \mathbb{E}\left[ \int_{\mathcal{Z}}\boldsymbol{1}_{\Psi\left(t,\widehat{X}^{j,N}_t,\widetilde{\mu}^{j,N}_t, \Phi_j(t,\widehat{X}^{j,N}_t), z\right)\neq \Psi\left(t,\widehat{X}^{j,N}_t,\widehat{\mu}^{1,N}_t, \Phi_j(t,\widehat{X}^{j,N}_t), z\right)} \nu(dz)\right]\\&
\leq   \mathbb{P}\left( \widetilde{X}^{j,N}_t\neq \widehat{X}^{j,N}_t  \right)
+ \mathbb{E}\left[w(\text{dist}(\widetilde{\mu}^{j,N}_t,\widehat{\mu}^{1,N}_t)\right],
				\end{split}
			\end{equation}
			where the inequality in the last row follows from Assumption \hypref{HypPsiContinuity}~1).
			\\
			Now, notice that
			\begin{equation}\label{dis_dist_mu}
				\lim_{N\to \infty}\max_{j \in [\![2,N]\!]} \mathbb{E}\left[\text{dist}(\widetilde{\mu}^{j,N}_t,\widehat{\mu}^{1,N}_t)\right]
				\overset{(\ref{stima_hat})}{\leq}\lim_{N\to \infty}\left\{ \frac{1}{N-1}+ \frac{1}{N-1}\sum_{l=2}^N \mathbb{P}\left(\widetilde{X}^{l,N}_t\neq \widehat{X}^{l,N}_t\right)\right\}					=0,
			\end{equation}
			because of the induction hypothesis. 
			Thence,  with the notation $\text{dist}(\widetilde{\mu}^{j,N}_t,\widehat{\mu}^{1,N}_t)=\delta_j^N$, for any $\varepsilon >0$, we have
			\[
				\begin{split}
					\max_{j \in [\![2,N]\!]} \mathbb{E}\left[w(\text{dist}(\widetilde{\mu}^{j,N}_t,\widehat{\mu}^{1,N}_t))\right]&
					=\max_{j \in [\![2,N]\!]} \mathbb{E}\left[w(\delta_j^N)\right]\\&
					\leq \max_{j \in [\![2,N]\!]} \bigg\{\mathbb{E}\left[w(\delta_j^N)\big| \delta_j^N \geq \varepsilon\right] \mathbb{P}( \delta_j^N \geq \varepsilon) +\mathbb{E}\left[w(\delta_j^N)\big| \delta_j^N < \varepsilon\right] \mathbb{P}( \delta_j^N < \varepsilon) \bigg\} \\&
					\leq \max_{j \in [\![2,N]\!]} \bigg\{||w||_{\infty} \mathbb{P}( \delta_j^N \geq \varepsilon)
 +  \mathbb{E}\left[w(\delta_j^N)\big| \delta_j^N < \varepsilon\right]  \bigg\}\\&
					\leq \max_{j \in [\![2,N]\!]} \bigg\{||w||_{\infty} \mathbb{P}( \delta_j^N \geq \varepsilon)
 + w(\varepsilon) \bigg\}
					\leq   w(\varepsilon)+ ||w||_{\infty} \max_{j \in [\![2,N]\!]} \frac{\mathbb{E}[ \delta_j^N]}{\varepsilon}\\&
					\leq   w(\varepsilon)+ \frac{||w||_{\infty}}{\varepsilon} \max_{j \in [\![2,N]\!]} {\mathbb{E}[ \text{dist}(\widetilde{\mu}^{j,N}_t,\widehat{\mu}^{1,N}_t)]} 								\overset{N\to \infty}{\longrightarrow} w(\varepsilon),
				\end{split}
			\]
			where we have made use of  disintegration, the fact that $w$ is bounded, Markov's inequality and the convergence result in  (\ref{dis_dist_mu}). 
		
			The fact that $w$ converges to $0$ as its argument goes to zero and the arbitrariness of $\varepsilon > 0 $ therefore implies
			\begin{equation}\label{Eq_conv_w}
				\lim_{N \to \infty}\left\{\max_{j \in [\![2,N]\!]} \mathbb{E}\left[w(\text{dist}(\widetilde{\mu}^{j,N}_t,\widehat{\mu}^{1,N}_t))\right]\right\}=0.
			\end{equation}
			Applying once more the induction hypothesis to the inequality in (\ref{to_be_con}), we get
			\begin{equation}
				\begin{split}
					\lim_{N\to \infty} \frac{1}{N-1} \mathbb{P}\left( \widetilde{X}^{j,N}_{t+1}\neq \widehat{X}^{j,N}_{t+1}  \right)&
					\leq \lim_{N\to \infty} \Bigg\{\frac{1}{N-1}  \mathbb{P}\left( \widetilde{X}^{j,N}_t\neq \widehat{X}^{j,N}_t  \right) +\max_{j \in [\![2,N]\!]} \mathbb{E}\left[w(\text{dist}(\widetilde{\mu}^{j,N}_t,\widehat{\mu}^{1,N}_t)\right]\Bigg\}= 0.
				\end{split}
			\end{equation}
			Thus, we have shown (\ref{tilde_X}), which, together with \eqref{stima_hat}, implies \eqref{conv_mu_N} and so our claim.
			\end{proof}
			
			Then, by the triangular inequality and the monotonicity of expectation, Equation \eqref{Eq_conv_mu_N} together with the statement in Claim \ref{Claim_conv_hat_mu_1N} yields
			\begin{align}\label{Eq_conv_tilde_mu_N}
					\mathbb{E}_{N}[\text{dist}_T(\widetilde \mu^{1,N},\mu)]
					\leq \mathbb{E}_{N}[\text{dist}_T(\widetilde \mu^{1,N},\widehat \mu^{1,N})]+\mathbb{E}_{N}[\text{dist}_T(\widehat \mu^{1,N},\mu^{1,N})]+\mathbb{E}_{N}[\text{dist}_T(\mu^{1,N},\mu)]	
					\overset{N \to \infty}{\longrightarrow}0.		
			\end{align}						
			
			Now, set 
			\begin{equation}
			\widetilde{J}_1^N(m_0^{\otimes N}, \gamma^N,\widetilde{\gamma}^N)
			:= \mathbb{E}_N\left[ \sum_{t=0}^T f(t,\widetilde X_t^{1,N},\mu_t,\widetilde u^{1,N}_t ) + F(\widetilde X_T^{1,N},\mu_T)\right],
			\end{equation}	
			and
			\begin{equation}
			\overline{J}_1^N(m_0^{\otimes N}, \gamma^N,\widetilde{\gamma}^N)
			:= \mathbb{E}_N\left[ \sum_{t=0}^T f(t,\overline X_t^{1,N},\mu_t, \widetilde u^{1,N}_t) + F(\overline X_T^{1,N},\mu_T)\right],
			\end{equation}
			with processes $\widetilde X^{1,N}$ and $\overline X^{1,N}$ defined in Equations \eqref{Eq_tilde_X_j} and \eqref{Eq_bar_X_j}.
			\\
			Now, consider a real valued sequence $\{f_n\}_{n \in \mathbb{N}}$ s.t., for any $n \in \mathbb{N}$, $f_n=h_n+g_n+h$ with $\lim_{n \to \infty}h_n=0$, $g_n \geq 0$, for all $n \in \mathbb{N}$. Then, 
		\begin{align}\label{Eq_lim_inf}
		\liminf_{n \to \infty} f_n \geq h.
		\end{align}			
			
			In order to prove Equation \eqref{2)}, we want to exploit the inequality in Equation \eqref{Eq_lim_inf} with $g_N=\overline{J}_1^N(m_0^{\otimes N},\gamma^N,\widetilde{\gamma}^N)-J(m_0,\rho,\iota)$, $h_N={J}_1^N(m_0^{\otimes N}, \gamma^N,\widetilde{\gamma}^N)-\overline{J}_1^N(m_0^{\otimes N},\gamma^N,\widetilde{\gamma}^N)$ and $h=J(m_0,\rho,\iota)$.
			First of all, \hypref{HypCosts} and the convergence in Equation \eqref{Eq_conv_tilde_mu_N} imply
			\begin{align}\label{Eq_J_tilde_j}
			&|{J}_1^N(m_0^{\otimes N}, \gamma^N,\widetilde{\gamma}^N)-\widetilde{J}_1^N(m_0^{\otimes N}, \gamma^N,\widetilde{\gamma}^N)|\\ \nonumber
			& \leq \mathbb{E}_N\Bigg[ \sum_{t=0}^T |f(t,\widetilde X_t^{1,N},\widetilde \mu^{1,N}_t, \widetilde u^{1,N}_t)-f(t,\widetilde X_t^{1,N},\mu_t, \widetilde u^{1,N}_t)| + |F(\widetilde X_T^{1,N},\widetilde\mu^{1,N}_T)-F(\widetilde X_T^{1,N},\mu_T)|\Bigg]\\\nonumber
			&  \leq \mathbb{E}_N\left[ \sum_{t=0}^T L\text{dist}(\widetilde\mu^{1,N}_t, \mu_t)	+ L\text{dist}(\widetilde\mu^{1,N}_T, \mu_T)	\right] = L \mathbb{E}\left[\text{dist}_T(\widetilde\mu^{1,N}, \mu)\right]\overset{N \to \infty}{\longrightarrow}0.
			\end{align}
			Furthermore, for all $t \in [\![0,T]\!]$,
			\begin{align}\label{Eq_conv_over_tilde_X_N}
			\lim_{N \to \infty}\mathbb{P}_N (\widetilde{X}^{1,N}_t \neq \overline{X}^{1,N}_t) =0.
			\end{align}
			We show this by induction on $t \in [\![0,T]\!]$.
			Indeed, for $t=0$, $\mathbb{P}_N (\widetilde{X}^{1,N}_0 \neq \overline{X}^{1,N}_0) =0$, being $\widetilde{X}^{1,N}_0 = \overline{X}^{1,N}_0=X^1_0$, $\mathbb{P}_N$-a.s., by construction.
			Now, suppose that  $\lim_{N \to \infty}\mathbb{P}_N (\widetilde{X}^{1,N}_t \neq \overline{X}^{1,N}_t)=0$, for some $t \in [\![0,T]\!]$. Then, exploiting Assumption \hypref{HypPsiContinuity}~1), we obtain
			\begin{align*}
				\mathbb{P}_N (\widetilde{X}^{1,N}_{t+1} \neq \overline{X}^{1,N}_{t+1})
				& \leq \mathbb{P}_N (\widetilde{X}^{1,N}_t \neq \overline{X}^{1,N}_t)+\mathbb{P}_N (\widetilde{X}^{1,N}_{t+1} \neq \overline{X}^{1,N}_{t+1},\widetilde{X}^{1,N}_t = \overline{X}^{1,N}_t)\\
				& \leq \mathbb{P}_N (\widetilde{X}^{1,N}_t \neq \overline{X}^{1,N}_t)\\
				& \quad  +\mathbb{P}_N \left(\Psi\left(t,\widetilde X^{1,N}_t, \widetilde \mu^{1,N}_t, \widetilde u^{1,N}_t,\xi^1_{t+1}\right) \neq \Psi\left(t,\overline X^{1,N}_t, \mu_t, \widetilde u^{1,N}_t,\xi^1_{t+1}\right),\widetilde{X}^{1,N}_t = \overline{X}^{1,N}_t\right)\\
				& \leq \mathbb{P}_N (\widetilde{X}^{1,N}_t \neq \overline{X}^{1,N}_t) +\mathbb{P}_N \left(\Psi\left(t,\widetilde X^{1,N}_t, \widetilde \mu^{1,N}_t, \widetilde u^{1,N}_t,\xi^1_{t+1}\right) \neq \Psi\left(t,\widetilde X^{1,N}_t, \mu_t, \widetilde u^{1,N}_t,\xi^1_{t+1}\right)\right)\\
				& \leq \mathbb{P}_N (\widetilde{X}^{1,N}_t \neq \overline{X}^{1,N}_t) +\mathbb{E}_N \left[w\left(\text{dist}(\widetilde \mu^{1,N}_t, \mu_t)\right)\right],
			\end{align*}
			and the last term on the right goes to zero as $N$ goes to infinity by the induction assumption and the convergence in Equation \eqref{Eq_conv_tilde_mu_N}, reasoning in a similar way as in the proof of Equation \eqref{Eq_conv_w}.
			As a consequence, we see
			\begin{align}\label{Eq_tilde_J_bar_J}
			&|\widetilde{J}_1^N(m_0^{\otimes N}, \gamma^N,\widetilde{\gamma}^N)-\overline{J}_1^N(m_0^{\otimes N}, \gamma^N,\widetilde{\gamma}^N)|\\ \nonumber
			& \leq \mathbb{E}_N\Bigg[ \sum_{t=0}^T |f(t,\widetilde X_t^{1,N},\mu_t, \widetilde u^{1,N}_t)-f(t,\overline X_t^{1,N},\mu_t, \widetilde u^{1,N}_t)| + |F(\widetilde X_T^{1,N},\mu_T)-F(\overline X_T^{1,N},\mu_T)|\Bigg]\\ \nonumber
			&  \leq 2||f||_{\infty} \sum_{t=0}^T  \mathbb{P}_N(\widetilde{X}^{1,N}_t \neq \overline{X}^{1,N}_t) + 2||F||_{\infty}\mathbb{P}_N (\widetilde{X}^{1,N}_T \neq \overline{X}^{1,N}_T)\overset{N \to \infty}{\longrightarrow}0,
			\end{align}
			where we have exploited the fact that $f$ and $F$ being $L$-Lipschitz continuous real-valued function on a compact domain are bounded.
			The convergences in Equations \eqref{Eq_J_tilde_j} and \eqref{Eq_tilde_J_bar_J} implies 
			\begin{align*}
				|h_N| 
				& = |{J}_1^N(m_0^{\otimes N}, \gamma^N,\widetilde{\gamma}^N)-\overline{J}_1^N(m_0^{\otimes N},\gamma^N,\widetilde{\gamma}^N)| \\
				& \leq |{J}_1^N(m_0^{\otimes N}, \gamma^N,\widetilde{\gamma}^N)-\overline{J}_1^N(m_0^{\otimes N},\gamma^N,\widetilde{\gamma}^N)| + |{J}_1^N(m_0^{\otimes N}, \gamma^N,\widetilde{\gamma}^N)-\overline{J}_1^N(m_0^{\otimes N},\gamma^N,\widetilde{\gamma}^N)| \overset{N \to \infty}{\longrightarrow}0.
			\end{align*}
			Thus, an application of the inequality in \eqref{Eq_lim_inf} with 
			$$
			g_N=\overline{J}_1^N(m_0^{\otimes N},\gamma^N,\widetilde{\gamma}^N)-J(m_0,\rho,\iota),
			$$
			$$
			h_N={J}_1^N(m_0^{\otimes N}, \gamma^N,\widetilde{\gamma}^N)-\overline{J}_1^N(m_0^{\otimes N},\gamma^N,\widetilde{\gamma}^N)
			$$ 
			and 
			$$
			h=J(m_0,\rho,\iota),
			$$
			yields \eqref{2)} provided that $g_N=\overline{J}_1^N(m_0^{\otimes N}, \gamma^N,\widetilde{\gamma}^N) - J(m_0, \rho, \iota) \geq 0$. 
			This is a consequence of the fact that $\overline{J}_1^N(m_0^{\otimes N}, \gamma^N,\widetilde{\gamma}^N)$ can be interpreted as the value of the MFG when the representative player implements the strategy $\widetilde u^{1,N}_t= \widetilde \Upsilon_1^N(t,\widetilde X_t^{1,N},\widetilde \mu_1^N)$, $t \in [\![0,T-1]\!]$. Indeed, the realization of the triple $(m_0^{\otimes N}, \gamma^N, \widetilde \gamma^N )$ for the first player on the previously defined complete probability space $( \Omega_N,\mathcal{F}_N,\mathbb{P}_N)$  can be seen as a tuple $\big((\Omega_N, \mathcal{F}_N,\{\mathcal{G}^N_t\}_{t=0}^{T-1} ,\mathbb{P}_N),\Phi_1, (\mu_t)_{t=0}^T, X_0,(\xi^1_t)_{t=1}^T$, $ (\widetilde u^{1,N}_t)_{t=0}^{T-1}$,  $(\overline X^{1,N}_t)_{t=0}^T \big)$ such that
			\begin{itemize}
			\item[\textbf{i)}]  $\mathbb{P}_N\circ (X^1_0)^{-1}=m_0$;
			\item[\textbf{ii)}] $\mathbb{P}_N\circ (\Phi_1,(\mu_t)_{t=0}^T)^{-1}=\rho$;
			\item[\textbf{iii)}] $(\xi^1_t)_{t=1}^T$,   $\mathcal{Z}$-valued random variables  i.i.d. all distributed according to $\nu;$

			\item[\textbf{iv)}] $X_0^1$,  $(\xi^1_t)_{t=1}^T$, $(\Phi_1,(\mu_t)_{t=0}^T)$ are independent;

			\item[\textbf{iv')}] For each $t \in [\![0,T-1]\!]$,
			\begin{itemize}
			\item $\xi^1_t$ is $\mathcal{G}^N_t$-measurable and $(\xi^1_{t+k})_{k=1}^T$ are jointly independent of $\mathcal{G}^N_t$,
			
			\item $\mathcal{G}^N_t=\mathcal{H}^N_t\lor \sigma(\mu^{(t)})\lor\sigma(\Phi_1)\lor\sigma(X^1_0)$, with $\mathcal{H}^N_t$ independent of $\sigma(\Phi_1,(\mu_t)_{t=0}^T,X^1_0)$,
			
			\item $\widetilde u^{1,N}_t$ is $\mathcal{G}^N_t$-measurable,
			\end{itemize} 
			
			\item[\textbf{v)}] Finally, for $t \in [\![0,T-1]\!]$, the state dynamics for the first player is given by
		\[
		\overline X^{1,N}_{t+1}= \Psi\left(t,\overline X_t^{1,N}, \mu_t, \widetilde u^{1,N}_t,\xi^1_{t+1}\right), \quad \mathbb{P}_N\text{-a.s.}
		\]
		\end{itemize}
		Above we have exploited the fact that, by definition, the sequence of control actions $(\widetilde u^{1,N}_t)_{t=0}^{T-1}$, 
		$$
		\widetilde u^{1,N}_t= \widetilde \Upsilon_1^N(t,\widetilde X^{1,N},\widetilde \mu^{1,N})=w_t^N(\vartheta_t,\Phi_1)((\widetilde X^{1,N})^{(t)},(\widetilde\mu^{1,N})^{(t)}),
		$$ 
		is adapted to the filtration $\{\mathcal{G}^N_t\}_{t=0}^{T-1}$, defined as
		$$
		\mathcal{G}^N_t
		:=\sigma((X^j_0)_{j=1}^N, (\xi^1_s, \dots, \xi^N_s)_{s=1}^t, \Phi_1, (\vartheta_s)_{s=0}^t, (Z_j)_{j=2}^N, \mu^{(t)} )= \mathcal{H}^N_t \lor \sigma(\mu^{(t)})\lor\sigma(\Phi_1)\lor\sigma(X^1_0),
		$$ 
		with $\mathcal{H}^N_t:=\sigma((X^j_0)_{j=2}^N, (Z_j)_{j=2}^N, (\xi^1_s, \dots, \xi^N_s)_{s=1}^{t},\vartheta^{(t)})$.
		Notice that, for all $t \in [\![1,T]\!]$, $\xi^1_t$ is $\mathcal{G}^N_t$-measurable and, in turn, $\mathcal{G}^N_t$ is jointly independent of $(\xi^1_{t+k})_{k=1}^{T-t}$.	Furthermore, for all $t \in [\![0,T]\!]$, $\mathcal{H}^N_t$, $\sigma(X^1_0)$ and $\sigma(\Phi_1,(\mu_t)_{t=0}^T)$ are independent.
		\\
		Hence, the tuple $((\Omega_N, \mathcal{F}_N, \{\mathcal{G}_t^N\}_{t=0}^{T-1}, \mathbb{P}_N), \Phi_1, (\mu_t)_{t=0}^T, X^1_0, (\xi^1_t)_{t=1}^T$, $(\widetilde u^{1,N}_t)_{t=0}^{T-1}$, $(\overline X^{1,N}_t)_{t=0}^{T})$ represents a realization of the triple $(m_0,\rho,(\widetilde u^{1,N}_t)_{t=0}^{T-1})$ for the open-loop MFG, with costs given by 
		$$
		\widehat{J}(m_0, \rho, (\widetilde u^{1,N}_t)_{t=0}^{T-1})=\overline{J}_1^N(m_0^{\otimes N}, \gamma^N,\widetilde{\gamma}^N).
		$$ 
		Now, $\rho$ is a solution of the correlated MFG according to Definition \ref{Def_CMFG_closed} and the values of the objective functionals at the equilibrium for the correlated MFGs in open-loop and closed-loop strategies are the same (see Proposition \ref{Prop_equiv_CMFGs}). 
		Thus, by the optimality condition in Definition \ref{Def_CMFG_open}, we get $\overline{J}_1^N(m_0^{\otimes N}, \gamma^N,\widetilde{\gamma}^N) \geq J(m_0, \rho, \iota) \geq 0$ and this ends our proof.		
		
	\end{proof}

\renewcommand\qedsymbol{$\square$}

\end{proof}


\section{A Toy Example} \label{SectMFGExample}

	In order to further motivate the definition of mean field game solution given in Section~\ref{SectMFG-closed}, we consider the two-state example introduced in \cite{CF2022} and show that it possesses correlated solutions with non-deterministic flow of measures also in the sense of Definition~\ref{Def_CMFG_closed}. Moreover, assumptions \hyprefall\ as well as conditions \Hyprefall\ on the correlated solution will be seen to hold. 
	
	Let us recall the setting. 
	Let  $T=2$, $\mathcal{X}=\{-1,1\}$, and $\Gamma=\{0,1\}$.
	Let the system function and the cost functional, respectively, be given by
	\begin{align}\label{Eq_ex_Psi}
		\Psi(x,\gamma,z)
		& = \Psi(t,x,\gamma,z)
		= x[\mathbf{1}_{\{0\}}(\gamma)( \mathbf{1}_{[0, \frac{1}{2}]} - \mathbf{1}_{(\frac{1}{2},1]} )(z) + \mathbf{1}_{\{1\}}(\gamma)( \mathbf{1}_{[0, \frac{3}{4}]} - \mathbf{1}_{(\frac{3}{4},1]} )(z)]\nonumber\\
		&= x[(1-\gamma)( \mathbf{1}_{[0, \frac{1}{2}]} - \mathbf{1}_{(\frac{1}{2},1]} )(z) +\gamma( \mathbf{1}_{[0, \frac{3}{4}]} - \mathbf{1}_{(\frac{3}{4},1]} )(z)],
	\end{align}
	and
	\begin{align}\label{Eq_ex_f_and_F}
		& f(t,x,\gamma,m)= c_0 (1-t)\gamma + t (c_1 \gamma - x \text{M}(m)),\nonumber
		\\
		& F(x,m)= -x \text{M}(m),
	\end{align}
	with $c_0,c_1 > 0$.
	
	\vspace{-1cm}
	\begin{figure*}[htb!]
        \hspace*{-6 cm}
        \begin{subfigure}{.2\textwidth}
            \begin{tikzpicture}[->,>=stealth',shorten >=1pt,auto,node distance=2.5cm,
                    semithick]
  \tikzstyle{every state}=[fill=white,draw=black,text=black]

  \node[state] 		   (R)                    {$1$};
  \node[state]         (A) [right of=R]       {$-1$};
  \draw (A) to [out=90,in=0,looseness=10] node {1/2} (A);
  \draw	(A) to [out=90,in=90,looseness=2] node {1/2} (R);
  \draw (R) to [out=270,in=270,looseness=2] node {1/2} (A);
  \draw	(R) to [out=270,in=180,looseness=10] node {1/2} (R);
\end{tikzpicture}
        \end{subfigure}
        \hspace*{4 cm}
        \begin{subfigure}{.2\textwidth}
            \begin{tikzpicture}[->,>=stealth',shorten >=1pt,auto,node distance=2.5cm,
                    semithick]
  \tikzstyle{every state}=[fill=white,draw=black,text=black]

  \node[state] 		   (R)                    {$1$};
  \node[state]         (A) [right of=R]       {$-1$};
  \draw (A) to [out=90,in=0,looseness=10] node {3/4} (A);
  \draw	(A) to [out=90,in=90,looseness=2] node {1/4} (R);
  \draw (R) to [out=270,in=270,looseness=2] node {1/4} (A);
  \draw	(R) to [out=270,in=180,looseness=10] node {3/4} (R);
\end{tikzpicture}
        \end{subfigure}
        \vspace*{-1 cm}
        \caption{States and corresponding transition probabilities for the action $\gamma=0$ (left) and $\gamma=1$ (right).}\label{fig:states}
\end{figure*}
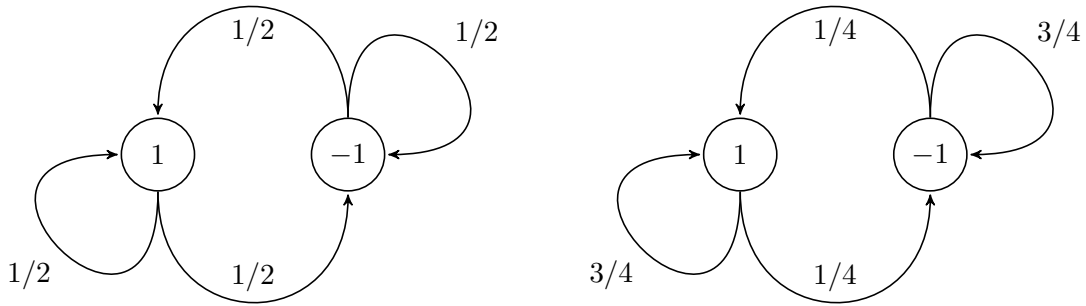

	Now, we consider the following \emph{candidate} correlated solution for the game
	\begin{align}\label{Eq_ex_rho}
		\rho = & + \beta_1(\delta_{(\varphi_+,m_+)}+\delta_{(\varphi_-,m_-)} )+\beta_2(\delta_{(\varphi_0,m_+)} + \delta_{(\varphi_0,m_-)})\nonumber \\
		& + \beta_3(\delta_{(\widehat \varphi_+,\widehat{m}_+)}+\delta_{(\widehat \varphi_-,\widehat{m}_-)})   + \beta_4(\delta_{(\varphi_0,\widehat{m}_+)} +\delta_{(\varphi_0,\widehat{m}_-)}),
	\end{align}
	where
	\begin{align}\label{Eq_ex_phis}
		&\varphi_0(t,x):=0, 
		\qquad
		\varphi_+(t,x):= \mathbf{1}_{\{1\}}(x)=\frac{1+x}{2},
		\qquad
		\varphi_-(t,x):= \mathbf{1}_{\{-1\}}(x)=\frac{1-x}{2},\nonumber \\
		& \widehat \varphi_+(t,x)= \mathbf{1}_{\{0\}}(t)\mathbf{1}_{\{1\}}(x)=\frac{(1-t)(1+x)}{2},
		\qquad
		\widehat  \varphi_-(t,x):= \mathbf{1}_{\{0\}}(t)\mathbf{1}_{\{-1\}}(x)=\frac{(1-t)(1-x)}{2}
	\end{align}
	and 
	\begin{align}
		m_+ :=(m_0,m_1^+,m_2^+),
		\quad
		m_+ :=(m_0,m_1^-,m_2^-),
		\quad
		\widehat m_+:=(m_0,m_1^+,m_0),
		\quad
		\widehat m_-:=(m_0,m_1^-,m_0),
	\end{align}
	with	
	\begin{align}\label{Eq_ex_ms}
		& m_0=\frac{1}{2}\delta_1+\frac{1}{2}\delta_{-1},
		& \nonumber \\
		& m_1^+=\frac{5\beta_1+4\beta_2}{8(\beta_1+\beta_2)}\delta_1+\frac{3\beta_1+4\beta_2}{8(\beta_1+\beta_2)}\delta_{-1},
		\qquad
		& m_1^-=\frac{3\beta_1+4\beta_2}{8(\beta_1+\beta_2)}\delta_1+\frac{5\beta_1+4\beta_2}{8(\beta_1+\beta_2)}\delta_{-1},\nonumber\\
		& m_2^+=\frac{21\beta_1+16\beta_2}{32(\beta_1+\beta_2)}\delta_1+\frac{11\beta_1+16\beta_2}{32(\beta_1+\beta_2)}\delta_{-1},
		\qquad
		& m_2^-=\frac{11\beta_1+16\beta_2}{32(\beta_1+\beta_2)}\delta_1+\frac{21\beta_1+16\beta_2}{32(\beta_1+\beta_2)}\delta_{-1},
	\end{align}	
	and $\beta_i > 0$, $i \in [\![1,4]\!]$, $\sum_{i=1}^4 \beta_i=\frac{1}{2}$.
	\\	
	
Let $((\Omega,\mathcal{F},\mathbb{P}), \Phi, \iota, (X_0,X_1,X_2), (\mu_0,\mu_1,\mu_2), (\xi_1,\xi_2))$ be a realization of $(m_0, \rho, \iota)$.
First of all, let's check that this example satisfies the additional assumptions we have set for this extended framework.

\begin{itemize}

	\item[\hypref{HypPsiTransitions}] Fix $t \in \{0,1\}$, $x,y \in \{-1,1\}$ and $\gamma \in \{0,1\}$ and let $Z$ be a r.v. distributed according to $\nu$ defined on a probability space $(\Omega, \mathcal{F}, \mathbb{P})$.
	We have
	\begin{align*}
		\mathbb{P}(\Psi(x,\gamma,Z)=y)
		= \mathbb{P}(x[(1-\gamma)( \mathbf{1}_{[0, \frac{1}{2}]} - \mathbf{1}_{(\frac{1}{2},1]} )(Z) +\gamma( \mathbf{1}_{[0, \frac{3}{4}]} - \mathbf{1}_{(\frac{3}{4},1]} )(Z)]=y)
	\end{align*}
	and so
	\begin{itemize}
		\item for $x=y \in \{-1,1\}$ and $\gamma=0$:
		\begin{align*}
			\mathbb{P}\left(\Psi(x,\gamma,Z)=y\right)
			= \mathbb{P}\left(( \mathbf{1}_{[0, \frac{1}{2}]} - \mathbf{1}_{(\frac{1}{2},1]} )(Z)=1\right)
			=\mathbb{P}\left(Z \in \left[0,\frac{1}{2}\right]\right)
			=\frac{1}{2};
		\end{align*}
		
		\item for $x=y \in \{-1,1\}$ and $\gamma=1$:
		\begin{align*}
			\mathbb{P}\left(\Psi(x,\gamma,Z)=y\right)
			= \mathbb{P}\left(( \mathbf{1}_{[0, \frac{3}{4}]} - \mathbf{1}_{(\frac{3}{4},1]} )(Z)=1\right)
			=\mathbb{P}\left(Z \in \left[0,\frac{3}{4}\right]\right)
			=\frac{3}{4};
		\end{align*}
		
		\item for $x\neq y \in \{-1,1\}$ and $\gamma=0$:
		\begin{align*}
			\mathbb{P}\left(\Psi(x,\gamma,Z)=y\right)
			= \mathbb{P}\left(( \mathbf{1}_{[0, \frac{1}{2}]} - \mathbf{1}_{(\frac{1}{2},1]} )(Z)=-1\right)
			=\mathbb{P}\left(Z \in \left(\frac{1}{2},1\right]\right)
			=\frac{1}{2};
		\end{align*}
		
		\item for $x\neq y \in \{-1,1\}$ and $\gamma=1$:
		\begin{align*}
			\mathbb{P}\left(\Psi(x,\gamma,Z)=y\right)
			= \mathbb{P}\left(( \mathbf{1}_{[0, \frac{3}{4}]} - \mathbf{1}_{(\frac{3}{4},1]} )(Z)=-1\right)
			=\mathbb{P}\left(Z \in \left(\frac{3}{4}, 1\right]\right)
			=\frac{1}{4}.
		\end{align*}
	\end{itemize}
	Thus, for any $t \in \{0,1\}$, $x,y \in \{-1,1\}$ and $\gamma \in \{0,1\}$,
	\begin{align}
		\mathbb{P}\left(\Psi(x,\gamma,Z)=y\right) \geq \frac{1}{4} > 0.
	\end{align}
	
	\item[\Hypref{HypSolutionFlows}] Notice that in the example $\mathbb{P}(\Phi = \varphi) >0$ if and only if $\varphi \in \{ \varphi_0,  \varphi_+, \varphi_-, \widehat{\varphi}_+, \widehat{\varphi}_-  \} =: \mathfrak{F}$. 
	Thus, if $\varphi \in \mathfrak{F}\setminus \{\varphi_0\}$, the conditions in \Hypref{HypSolutionFlows} are obviously satisfied. Indeed, the corresponding set $\mathcal{P}_\varphi$ reduces to a singleton:  in particular, we have $\mathcal{P}_{\varphi_+}= \{m_+\}$, $\mathcal{P}_{\varphi_-}= \{m_-\}$, $\mathcal{P}_{\widehat\varphi_+}= \{\widehat m_+\}$ and $\mathcal{P}_{\widehat\varphi_-}= \{\widehat m_-\}$. 
	When $\{\Phi = \varphi_0 \}$, we have $\mathcal{P}_{\varphi_0} = \{m_+, m_-, \widehat{m}_+, \widehat{m}_- \}$ and:
	\begin{enumerate}
		\item $|\mathcal{P}_{\varphi_0}|=4$;
		
		\item $\mathbb{P}_{\varphi_0}(\mu \in \mathcal{P}_{\varphi_0})=1$;
		
		\item $\mathbb{P}_{\varphi_0}(\mu = m)\geq \min\{\frac{\beta_2}{2(\beta_2+\beta_4)}, \frac{\beta_4}{2(\beta_2+\beta_4)}\}$, for any $m \in \mathfrak{M}:= \{m_+, m_-, \widehat{m}_+, \widehat{m}_- \}$.
	\end{enumerate}
	Further notice that, in this case, we have
	\begin{align*}
		\mathcal{P}_{\varphi_0}^{(0)} = \{m_0\},
		\qquad
		\mathcal{P}_{\varphi_0}^{(1)} = \{m_+^{(1)}, m_-^{(1)} \}= \{ (m_0, m_1^+), (m_0, m_1^-) \}.
	\end{align*}
	
	\item[\Hypref{HypSolutionCondInd}] In order to guarantee the validity of this assumption, we have to set a new condition on the parameters of the model, that is $\beta_1=\beta_3=\beta$ and $\beta_2=\beta_4=\gamma$ (so that $\beta+\gamma= \frac{1}{4}$).
	It is sufficient to notice that, given a probability space $(\Omega, \mathcal{F}, \mathbb{P})$ endowed with a couple of independent random variables $\mu \sim \rho_2$, with $\rho_2= \rho \circ \pi_{\mathcal{P(X)}}^{-1}= \frac{1}{4}(\delta_{m_-}+\delta_{m_+}+\delta_{\widehat m_-}+\delta_{\widehat m_+})$, and $W \sim \nu$ and setting
	\begin{align}\label{Eq_ex_Phi_def_cond}
		\Phi 
		& = \mathbf{1}_{\{m_+\}}(\mu)(\mathbf{1}_{[0,4\beta]}(W)\varphi_+ + \mathbf{1}_{(4\beta, 1]}(W)\varphi_0 )
		+\mathbf{1}_{\{m_-\}}(\mu)(\mathbf{1}_{[0,4\beta]}(W)\varphi_- + \mathbf{1}_{(4\beta, 1]}(W)\varphi_0 )\nonumber \\
		& \qquad  +\mathbf{1}_{\{\widehat m_+\}}(\mu)(\mathbf{1}_{[0,4\beta]}(W)\widehat\varphi_+ + \mathbf{1}_{(4\beta, 1]}(W)\varphi_0 )
		+\mathbf{1}_{\{\widehat m_+\}}(\mu)(\mathbf{1}_{[0,4\beta]}(W)\widehat\varphi_+ + \mathbf{1}_{(4\beta, 1]}(W)\varphi_0 )\\ \nonumber
		&=:\alpha_1(W,\mu^{(2)}),
	\end{align}
	we have:
	\begin{itemize}
		\item $\mathbb{P}\circ (\Phi,\mu)^{-1}=\rho$.
		Indeed, exploiting the fact that $\mu$ is distributed according to $\rho_2$ and that $\Phi$ is defined via Equation \eqref{Eq_ex_Phi_def_cond}, for $(\varphi, \widetilde{m}) \in \mathfrak{F \times M}$, we have
		\begin{align*}
			\mathbb{P}((\Phi,\mu)=(\varphi,\widetilde{m}))
			 & = \sum_{m \in \mathfrak{M}} \mathbf{1}_{\{m\}} (\widetilde{m}) \mathbb{P}(\mu = m)\mathbb{P}(\Phi=\varphi | \mu= m)	\\
			&= \frac{1}{4}\Bigg\{ \mathbf{1}_{\{m_+\}}(\widetilde m)(4\beta \mathbf{1}_{\{\varphi_+\}}(\varphi) + 4\gamma\mathbf{1}_{\{\varphi_0\}}(\varphi))  
			+\mathbf{1}_{\{m_-\}}(\widetilde m)(4\beta \mathbf{1}_{\{\varphi_-\}}(\varphi) + 4\gamma\mathbf{1}_{\{\varphi_0\}}(\varphi))  \\
			& \qquad  +\mathbf{1}_{\{\widehat m_+\}}(\widetilde m)(4\beta \mathbf{1}_{\{\widehat \varphi_+\}}(\varphi) + 4\gamma\mathbf{1}_{\{\varphi_0\}}(\varphi)) 
			+\mathbf{1}_{\{\widehat m_-\}}(\widetilde m)(4\beta \mathbf{1}_{\{\widehat \varphi_-\}}(\varphi) + 4\gamma\mathbf{1}_{\{\varphi_0\}}(\varphi)) \Bigg\}\\
			 & = \rho(\varphi, \widetilde{m}).
		\end{align*}
		
		\item It holds that
		\begin{align*}
			\Phi(0,\cdot)
			& =  \mathbf{1}_{\{m_+\}}(\mu)\mathbf{1}_{[0,4\beta]}(W)\mathbf{1}_{\{1\}} 
		+\mathbf{1}_{\{m_-\}}(\mu)\mathbf{1}_{[0,4\beta]}(W)\mathbf{1}_{\{-1\}}\nonumber \\
			& \qquad +\mathbf{1}_{\{\widehat m_+\}}(\mu)\mathbf{1}_{[0,4\beta]}(W)\mathbf{1}_{\{1\}}  
		+\mathbf{1}_{\{\widehat m_-\}}(\mu)\mathbf{1}_{[0,4\beta]}(W)\mathbf{1}_{\{-1\}} \\
			& = \mathbf{1}_{\{m_1^+\}}(\mu_1)\mathbf{1}_{[0,4\beta]}(W)\mathbf{1}_{\{1 \}}
		+\mathbf{1}_{\{m_1^-\}}(\mu_1)\mathbf{1}_{[0,4\beta]}(W)\mathbf{1}_{\{-1\}}\\
		& = \alpha_0(W, \mu_1),
		\end{align*}
		with $\alpha_0: \mathcal{Z}\times \mathcal{P(X)}^2 \to \mathcal{E}$, measurable function defined as
		\begin{displaymath}
			\alpha_0(w,m):=\mathbf{1}_{\{m^+_1\}}(m)\mathbf{1}_{[0,4\beta]}(w)\mathbf{1}_{\{1\}} +\mathbf{1}_{\{m^-_1\}}(m)\mathbf{1}_{[0,4\beta]}(w)\mathbf{1}_{\{-1\}},
		\end{displaymath}
				
		Hence,  the conditional independence property holds being equivalent to the existence of a $Z \sim \nu$ independent of $\mu$ s.t. $\Phi(0, \cdot)= u (Z, \mu^{(1)})$, with $u: \mathcal{Z}\times \mathcal{P(X)}^2 \to \mathcal{E}$, measurable function (see \cite[Proposition 6.13]{Kallenberg}).
	
	\end{itemize}
	
	\item[\hypref{HypPsiContinuity}] This is omitted being the same as in \cite{CF2022}.
	
	\item[\hypref{HypCosts}] Let us start by checking the Lipschitzianity of $f$.
	\begin{itemize}
		\item[-$t=0$:] for any $x_1,x_2 \in \mathcal{X}$, $\gamma_1, \gamma_2 \in \Gamma$ and $m_1, m_2 \in \mathcal{P(X)}$,
		\begin{align*}
			|f(0,x_1,\gamma_1,m_1)- f(0,x_2,\gamma_2,m_2)|	
			= c_0 |\gamma_1 - \gamma_2|
			= c_0 \text{d}(\gamma_1,\gamma_2);
		\end{align*}
		
		\item[-$t=1$:] for any $x_1,x_2 \in \mathcal{X}$, $\gamma_1, \gamma_2 \in \Gamma$ and $m_1, m_2 \in \mathcal{P(X)}$,
		\begin{align*}
			|f(1,x_1,\gamma_1,m_1)- f(1,x_2,\gamma_2,m_2)|	
			& \leq c_1 \text{d}(\gamma_1,\gamma_2) + 2 \text{dist}(m_1, m_2) + 2\text{d}(x_1, x_2). 
		\end{align*}
	\end{itemize}
	Now, for any $x_1,x_2 \in \mathcal{X}$ and $m_1, m_2 \in \mathcal{P(X)}$,
	\begin{align*}
		|F(x_1,m_1) - F(x_2, m_2)|
		& \leq  2\text{d}(x_1, x_2) + 2 \text{dist}(m_1, m_2).
	\end{align*}
	Hence, the validity of the last assumption follows from the choice $L= \max\{c_0, 4, c_1 + 4 \}= \max\{c_0, c_1 + 4 \}$.
\end{itemize}


Now, let us write down some identities specific for the example that we are going to exploit in the following.
		Concerning the \emph{means} associated to the measure flows, we have 
		\begin{align}\label{Eq_ex_means_meas_flows}
			& \text{M}(m_0)=0, 
			\qquad
			\text{M}(m_1^+)
			= - \text{M}(m_1^-)
			= \frac{\beta_1}{4(\beta_1+\beta_2)} 
			= \beta, \nonumber\\
			& \text{M}( m_2^+) 
			= - \text{M}(m_2^-)
			= \frac{5\beta_1}{16(\beta_1+\beta_2)}
			= \frac{5}{8}\beta.		
		\end{align}	
		Then, set $\mathbb{P}_0(\cdot):= \mathbb{P}(\cdot|\Phi=\varphi_0)$ and, analogously, $\mathbb{E}_0[\cdot]:= \mathbb{E}[\cdot|\Phi=\varphi_0]$. 
		The distribution of the measure flow conditionally on the event $\{ \Phi = \varphi_0\}$ can be computed explicitly and it is given by
		\begin{align}\label{Eq_ex_distr_cond_mu}
			& \mathbb{P}_0\left(\mu^{(2)}=m_+\right)
			= \mathbb{P}_0\left(\mu^{(2)}=m_-\right)
			= \frac{\beta_2}{2(\beta_2+\beta_4)}
			= \frac{1}{4},\nonumber \\
			& \mathbb{P}_0\left(\mu^{(2)}=\widehat{m}_-\right)
			= \mathbb{P}_0\left(\mu^{(2)}=\widehat{m}_+\right)
			= \frac{\beta_4}{2(\beta_2+\beta_4)}
			= \frac{1}{4},
		\end{align}	
		and, setting $m_+^{(1)}:=(m_0,m_1^+)$ and $m_-^{(1)}:=(m_0,m_1^-)$, we have
		\begin{align}\label{Eq_ex_distr_cond_mu_1}
			& \mathbb{P}_0\left(\mu^{(1)}=m_+^{(1)}\right)
			= \mathbb{P}_0\left(\mu^{(1)}=m_-^{(1)}\right)
			= \frac{1}{2}.
		\end{align}
		Then, we compute the distribution of $\mu^{(2)}$ conditionally on $\mu^{(1)}$:
		\begin{align}\label{Eq_ex_distr_cond_mu_2_on_mu_1}
			& \mathbb{P}_0\left(\mu^{(2)}=m_+| \mu^{(1)}=m_+^{(1)}\right)
			= \mathbb{P}_0\left(\mu^{(2)}=m_-| \mu^{(1)}=m_-^{(1)}\right)
			= \frac{\beta_2}{\beta_2+\beta_4}
			= \frac{1}{2},\\
			& \mathbb{P}_0\left(\mu^{(2)}=\widehat{m}_+ | \mu^{(1)}=m_+^{(1)}\right)
			= \mathbb{P}_0\left(\mu^{(2)}=\widehat{m}_-| \mu^{(1)}=m_-^{(1)}\right)
			= \frac{\beta_4}{\beta_2+\beta_4}
			= \frac{1}{2}.
		\end{align}	


	The conditions on parameters ensuring the optimality of $\rho$ are presented in the following result.

	\begin{proposition}
	Consider the MFG setting described above. Then, 
		\begin{align}\label{Eq_ex_def_rho_rest}
			\rho =  \beta (\delta_{(\varphi_+,m_+)}+\delta_{(\varphi_-,m_-)} + \delta_{(\widehat \varphi_+,\widehat{m}_+)}+\delta_{(\widehat \varphi_-,\widehat{m}_-)} )+\gamma(\delta_{(\varphi_0,m_+)} + \delta_{(\varphi_0,m_-)} + \delta_{(\varphi_0,\widehat{m}_+)} +\delta_{(\varphi_0,\widehat{m}_-)}),
		\end{align}	 
	is optimal provided that
		\begin{itemize}
	
			\item[i)] $\beta, \gamma \in [0,1]$  and $\beta+\gamma=\frac{1}{4}$,
	
			\item[ii)] 
			$
				0 < c_0 < \frac{\beta}{2},
			$
			\item[ii)] 
			$
				\frac{5}{32} \beta < c_1 < \frac{5}{16} \beta.
			$	
		\end{itemize}
	\end{proposition}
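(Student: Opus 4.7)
The plan is to verify the two conditions of Definition~\ref{Def_CMFG_closed}, consistency (Con) and optimality (Opt).

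Consistency follows by direct computation: the specific weights in \eqref{Eq_ex_ms} defining $m_1^{\pm}$ and $m_2^{\pm}$ are chosen precisely so that, conditioning on $\mu$ and averaging the one-step transitions (Figure~\ref{fig:states}) over the law of $\Phi$ given $\mu$, one recovers $m_1^{\pm}$ at $t=1$ and $m_2^{\pm}$ or $m_0$ at $t=2$; for instance $\mathbb{P}(X_1 = 1 \mid \mu = m_+) = 4\beta \cdot 5/8 + 4\gamma \cdot 1/2 = (5\beta + 4\gamma)/2 = m_1^+(1)$. The check is the same as in \cite{CF2022} and transfers verbatim since (Con) has the same form here. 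What is genuinely new is optimality against progressive deviations $w \in \widehat{\mathcal{D}}$.

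For (Opt), I invoke the conditional DPP (Proposition~\ref{Prop_cond_DPP_CMFG}), whose hypotheses hold by \hypref{HypPsiTransitions} and the previously verified \Hypref{HypSolutionFlows}. For each $\varphi \in \{\varphi_0, \varphi_{\pm}, \widehat{\varphi}_{\pm}\}$ with $\mathbb{P}(\Phi = \varphi) > 0$, conditioning on $\{\Phi = \varphi\}$ makes $\mu^{(T)}$ range over a finite set, so $V_\varphi(t, x^{(t)}, m^{(t)})$ can be computed explicitly by backward induction and optimality reduces at each step to the one-step comparison between $\gamma = 0$ and $\gamma = 1$, requiring that $\gamma = \varphi(t, x_t)$ achieve the minimum. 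At $t = 1$ the differential between $\gamma = 1$ and $\gamma = 0$ evaluates to $c_1 - \tfrac{5\beta}{16} X_1$ when $\mu \in \{m_{\pm}\}$ (using $M(m_2^{\pm}) = \pm 5\beta/8$), to $c_1 - \tfrac{5\beta}{32} X_1$ when $\Phi = \varphi_0$ (using $\mathbb{E}_{\varphi_0}[M(\mu_2) \mid \mu^{(1)} = m_{\pm}^{(1)}] = \pm 5\beta/16$ from \eqref{Eq_ex_distr_cond_mu_2_on_mu_1}), and simply $c_1$ when $\mu \in \{\widehat{m}_{\pm}\}$ (since $M(m_0) = 0$). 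Demanding optimality of the recommended action in each case yields $c_1 < 5\beta/16$ (binding from $\varphi_{\pm}$ at $X_1 = \pm 1$), $c_1 > 5\beta/32$ (binding from $\varphi_0$ at $X_1 = \pm 1$), and no constraint from $\widehat{\varphi}_{\pm}$ (using $c_1 > 0$). The main technical difficulty here is carefully tracking the conditional laws of $\mu_2$ given $(\Phi, \mu^{(1)})$ in the $\varphi_0$ case, where the player's information does not pin down the flow of measures exactly.

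At $t = 0$, one first reads off $V_\varphi(1, \cdot, \cdot)$ from the previous step: $V_{\widehat{\varphi}_\pm}(1, \cdot, \cdot) = \mp \beta X_1$ since $u_1 = 0$ and $M(m_0) = 0$; $V_{\varphi_+}(1, \cdot, \cdot)$ equals $c_1 - \beta - 5\beta/16$ at $X_1 = 1$ and $\beta$ at $X_1 = -1$, symmetrically for $\varphi_-$; and $V_{\varphi_0}(1, (X_0, X_1), m_\pm^{(1)}) = \mp \beta X_1$. The binding constraint then comes from $\widehat{\varphi}_{\pm}$: for $X_0 = 1$ the recommendation $u_0 = 1$ yields total cost $c_0 + (3/4)(-\beta) + (1/4)\beta = c_0 - \beta/2$, while deviating to $u_0 = 0$ yields $0$, forcing $c_0 < \beta/2$. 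The analogous constraint from $\varphi_{\pm}$ reduces to $c_0 < 37\beta/64 - c_1/4$, which is strictly weaker given $c_1 < 5\beta/16$. For $\varphi_0$, averaging over $\mu^{(1)}$ yields $\mathbb{E}_{\varphi_0}[V_{\varphi_0}(1, (X_0, X_1), \mu^{(1)}) \mid X_0, u_0] = 0$ independently of $u_0$, so the choice $u_0 = 0$ is trivially optimal. Combining these inequalities with $c_0 > 0$ and $\beta + \gamma = 1/4$ recovers exactly the three conditions in the statement.
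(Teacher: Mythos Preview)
Your proposal is correct and follows essentially the same approach as the paper: disintegrate over $\{\Phi=\varphi\}$, apply the conditional DPP of Proposition~\ref{Prop_cond_DPP_CMFG}, and check by backward induction that at each $(t,x^{(t)},m^{(t)})$ the recommended action $\varphi(t,x_t)$ attains the minimum, arriving at exactly the constraints $c_1<\tfrac{5}{16}\beta$ (from $\varphi_\pm$), $c_1>\tfrac{5}{32}\beta$ (from $\varphi_0$), and $c_0<\tfrac{\beta}{2}$ (from $\widehat\varphi_\pm$). Your observation that the $\varphi_\pm$ case at $t=0$ yields the weaker bound $c_0<\tfrac{37}{64}\beta-\tfrac{c_1}{4}$ and your symmetry shortcut for $\varphi_0$ at $t=0$ are both correct refinements of the paper's more explicit computations.
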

	
	\begin{remark}
		Under the assumption that $\beta_1 = \beta_3 = \beta$ and $\beta_2 = \beta_4 = \gamma$, which we have previously set to ensure the validity of \Hypref{HypSolutionCondInd}, the consistency property is automatically satisfied.
		Furthermore, under the stronger conditions in the Proposition above, there are still infinitely many correlated solutions but we loose a degree of freedom w.r.t. the result in \cite{CF2022}.
	\end{remark}
	
	\begin{proof}		
		In this simplified context the set of strategy modifications maps the set $\mathfrak{F}$ into
		\begin{align}\label{Eq_ex_W_T}
			\widehat{\mathcal{R}} = \{ \psi: \{0,1\}\times \mathcal{X}^3\times \mathfrak{M} \to \Gamma, \text{ progressively measurable}\},
		\end{align}
		that is, for any $w \in \widehat{\mathcal{D}}$ and for any $\varphi \in \mathfrak{F}$,  $w(\varphi)(0,(x_0,x_1,x_2), (m_0,m_1, m_2))= w(\varphi)(0,x_0, m_0)$ and  $w(\varphi)(1,(x_0,x_1,x_2), (m_0,m_1, m_2))$ $=$ $ w(\varphi)(1,(x_0,x_1), (m_0,m_1))$.
		In order to find the conditions on the parameters in the definition of $\rho$ in Equation \eqref{Eq_ex_def_rho_rest} ensuring that it is a solution in the MFG, we rewrite the cost functional exploiting desintegration over sets of the form $\{\Phi = \varphi\}$, with $\varphi \in \mathfrak{F} $,
		\begin{align*}
		J(m_0, \rho, w)
		& = \mathbb{E}\Big[c_0 w(\Phi)(0, X_0,m_0)+ c_1 w(\Phi)(1,(X_0,X_1),(m_0,\mu_1)) - X_1\text{M}(\mu_1)-X_2\text{M}(\mu_2)\Big]\nonumber\\
		& = \beta\Bigg\{c_0 \mathbb{E}_+\big[w(\varphi_+)(0, X_0,m_0)\big]+ c_1 \mathbb{E}_+\big[w(\varphi_+)(1, (X_0,X_1),(m_0,m_1^+))\big] \nonumber\\
		& \quad - \mathbb{E}_+\big[X_1\big]\text{M}(m_1^+)-\mathbb{E}_+\big[X_2\big]\text{M}(m_2^+)\Bigg\} + \beta \Bigg\{c_0 \widehat{\mathbb{E}}_+\big[w(\widehat\varphi_+)(0, X_0,m_0)\big]\nonumber\\
		& \quad + c_1  \widehat{\mathbb{E}}_+\big[w(\widehat\varphi_+)(1, (X_0,X_1),(m_0,m_1^+))\big] -  \widehat{\mathbb{E}}_+\big[\widehat X_1\big]\text{M}(m_1^+)- \widehat{\mathbb{E}}_+\big[\widehat X_2\big]\text{M}(m_0)\Bigg\}\nonumber\\
		& \quad +\beta \Bigg\{c_0 \mathbb{E}_-\big[w(\varphi_-)(0, X_0,m_0)\big]+ c_1 \mathbb{E}_-\big[w(\varphi_-)(1,(X_0,X_1),(m_0,m_1^-))\big] \nonumber\\
		& \quad - \mathbb{E}_-\big[X_1\big]\text{M}(m_1^-)-\mathbb{E}_-\big[X_2\big]\text{M}(m_2^-)\Bigg\} + \beta \Bigg\{c_0  \widehat{\mathbb{E}}_-\big[w(\widehat\varphi_-)(0, X_0,m_0)\big] \nonumber\\
		& \quad + c_1 \widehat{\mathbb{E}}_-\big[w(\widehat\varphi_-)(1, (X_0,X_1),(m_0,m_1^-))\big] - \widehat{\mathbb{E}}_-\big[X_1\big]\text{M}(m_1^-)-\widehat{\mathbb{E}}_-\big[X_2\big]\text{M}(m_0)\Bigg\}\nonumber\\
		& \quad + 4 \gamma \Bigg\{c_0 \mathbb{E}_0\big[w(\varphi_0)(0, X_0,m_0)\big]+ c_1 \mathbb{E}_0\big[w(\varphi_0)(1, (X_0,X_1),(m_0,\mu_1))\big] \nonumber\\
		& \quad - \mathbb{E}_0\big[X_1\text{M}(\mu_1)\big]-\mathbb{E}_0\big[X_2\text{M}(\mu_2)\big]\Bigg\},
		\end{align*}
		where we have exploited the fact that the conditioning on $\{\Phi=\varphi\}$, with $\varphi \in \{ \varphi_+, \varphi_-, \widehat \varphi_+, \widehat \varphi_-\}$, completely determines the measure flow as well. Notice that the notation $\mathbb{E}_+$ (resp. $\mathbb{E}_-$, $\widehat{\mathbb{E}}_+$, $\widehat{\mathbb{E}}_-$ and $\mathbb{E}_0$) was introduced to denote conditional expectation w.r.t. the event $\{ \Phi=\varphi_+\}$ (resp. $\varphi_-, \widehat \varphi_+, \widehat \varphi_-$ and $\varphi_0$). 
		Before proceeding with the study of the different cases we make a useful remark.
		\\
		
		Consider the probability space $(\Omega, \mathcal{F}, \mathbb{P}_\varphi)$, where $\mathbb{P}_\varphi(\cdot)= \mathbb{P}(\cdot| \Phi = \varphi)$, with $\varphi \in \mathfrak{F}$. For any $t \in [\![0, T-1]\!]$, $X^{(t)}$ and $(\mu_{t+1}, \dots, \mu_T)$ are conditionally independent given $\mu^{(t)}$. Indeed, for any $m \in \mathcal{P(X)}^{T-t}$, $x \in \mathcal{X}^{t+1}$, exploiting in sequence the tower property, the measurability of $X^{(t)}$ w.r.t. $\sigma(X_0, \xi_1, \dots, \xi_t, \Phi, \mu^{(t)})$, the joint independence of $\mu$ from $X_0$ and $\xi_1, \dots, \xi_T$, and the measurability of conditional expectations, we have
				\begin{align*}
					\mathbb{P}_\varphi ((\mu_{t+1}, \dots, \mu_T) = m, X^{(t)}=x | \mu^{(t)})
					 & = \mathbb{E}_\varphi [\mathbf{1}_{\{m\}}(\mu_{t+1}, \dots, \mu_T) \mathbf{1}_{\{x\}}(X^{(t)}) | \mu^{(t)}]\\
					& = \mathbb{E}_\varphi [ \mathbf{1}_{\{x\}}(X^{(t)})\mathbb{E}_\varphi [ \mathbf{1}_{\{m\}}(\mu_{t+1}, \dots, \mu_T)| \mu^{(t)}, X_0,\xi_1,\dots,\xi_t]| \mu^{(t)}]\\
					& = \mathbb{E}_\varphi [ \mathbf{1}_{\{x\}}(X^{(t)})\mathbb{E}_\varphi [ \mathbf{1}_{\{m\}}(\mu_{t+1}, \dots, \mu_T)| \mu^{(t)}]| \mu^{(t)}]\\
					& = \mathbb{E}_\varphi [ \mathbf{1}_{\{m\}}(\mu_{t+1}, \dots, \mu_T)| \mu^{(t)}]\mathbb{E}_\varphi [ \mathbf{1}_{\{x\}}(X^{(t)})| \mu^{(t)}]\\
					& = \mathbb{P}_\varphi ((\mu_{t+1}, \dots, \mu_T) = m| \mu^{(t)}) \mathbb{P}_\varphi ( X^{(t)}=x | \mu^{(t)}).
				\end{align*}

		Now, let's start by discussing the first case, that is when the suggestion is  $\{\Phi = \varphi_+\}$.
		We proceed exploiting the DPP (Proposition \ref{Prop_cond_DPP_CMFG}).
		In the following we omit the dependency on the measure flow being it identically equal to a single element and we introduce the following simplified notations: $V_+:= V_{\varphi_+}$, $V_-:= V_{\varphi_-}$, $\widehat{V}_+:= V_{\widehat{\varphi}_+}$, $\widehat{V}_-:= V_{\widehat{\varphi}_-}$ and $V_0:= V_{\varphi_0}$.
		\begin{itemize}
			\item For $t=2$, $x \in \{-1, 1\}^3$,
			\begin{align*}
				& V_+(2, (x_0,x_1,1)) = F(1, m_2^+)= - \text{M}(m_2^+) = -\frac{5}{4}\beta,\\
				& V_+(2, (x_0,x_1,-1)) = F(-1, m_2^+)= + \text{M}(m_2^+) = \frac{5}{4}\beta,
			\end{align*}
			
			\item For $t=1$, $x \in \{-1, 1\}^2$,
			\begin{align*}
				V_+(1, (x_0,-1)) 
				& = \min_{\gamma \in \{0,1\}} \Bigg\{ c_1 \gamma + \text{M}(m_1^+) + \mathbb{E}_+\left[V_+\left(2, (x_0,-1,\Psi(-1,\gamma,\xi_2))\right)\right] \Bigg\} \\
				& = \min_{\gamma \in \{0,1\}} \Bigg\{ c_1 \gamma + \text{M}(m_1^+) + \text{M}(m_2^+)\left[\mathbb{P}_+\left(\Psi(-1,\gamma,\xi_2)=-1\right) - \mathbb{P}_+\left(\Psi(-1,\gamma,\xi_2)=1\right) \right] \Bigg\}\\
				& =  \text{M}(m_1^+) + \min \Bigg\{ \text{M}(m_2^+)\left(\frac{1}{2}-\frac{1}{2}\right), c_1 + \text{M}(m_2^+)\left( -\frac{1}{4}+ \frac{3}{4} \right) \Bigg\} \\
				& = \beta + \min \Bigg\{0, c_1 + \frac{5}{16}\beta \Bigg\}.		
			\end{align*}
			This implies that, at time $t=1$ in state $(x_0,-1)$, $\gamma=0$ is optimal which corresponds to $\varphi_+$ evaluated at $t=1, x=-1$.
			Analogously,
			\begin{align*}
				V_+(1, (x_0,1)) 
				& = \min_{\gamma \in \{0,1\}} \Bigg\{ c_1 \gamma - \text{M}(m_1^+) + \mathbb{E}_+\left[V_+\left(2, (x_0,1,\Psi(1,\gamma,\xi_2))\right)\right] \Bigg\} \\
				& = - \beta + \min \Bigg\{0, c_1 - \frac{5}{16}\beta \Bigg\} .			
			\end{align*}
			This implies that $\gamma=1$ (and so $\varphi_+$) is optimal at time $t=1$ and state $(x_0,1)$ if and only if	 $c_1 - \frac{5}{16}\beta < 0$, that is
			\begin{align}\label{Eq_ex_u_cond_on_c1}
				0 < c_1 < \frac{5}{16}\beta.
			\end{align}
			
			\item For $t=0$, $x \in \{-1, 1\}$,
			\begin{align*}
				V_+(0, -1) 
				& = \min_{\gamma \in \{0,1\}} \Bigg\{ c_0 \gamma + \mathbb{E}_+\left[V_+\left(1, (-1,\Psi(-1,\gamma,\xi_2))\right)\right] \Bigg\} \\
				& = \min_{\gamma \in \{0,1\}} \Bigg\{ c_0 \gamma  + \left( - \beta  + c_1 - \frac{5}{16}\beta \right)\mathbb{P}_+\left(\Psi(-1,\gamma,\xi_2)=1\right) + \beta \mathbb{P}_+\left(\Psi(-1,\gamma,\xi_2)=-1\right)  \Bigg\}\\
				& = \min  \Bigg\{0 + \frac{1}{2}\left( - \beta  + c_1 - \frac{5}{16}\beta \right) + \frac{1}{2}\beta,  c_0  + \left( - \beta  + c_1 - \frac{5}{16}\beta \right) \frac{1}{4} + \beta  \frac{3}{4}  \Bigg\} \\
				& = \min  \Bigg\{ \frac{1}{2}\left(  c_1 - \frac{5}{16}\beta \right),  c_0  + \left( c_1 - \frac{5}{16}\beta \right) \frac{1}{4} + \beta  \frac{1}{2}  \Bigg\}.		
			\end{align*}
			Since $c_1 - \frac{5}{16}\beta < 0$ and all the parameters are positive, at time $t=0$ in state $x_0=-1$, $\gamma=0$ is optimal which corresponds to $\varphi_+$ evaluated at $t=0, x=-1$.
			Analogously,
			\begin{align*}
				V_+(0, 1) 
				& = \min_{\gamma \in \{0,1\}} \Bigg\{ c_0 \gamma + \mathbb{E}_+\left[V_+\left(1, (1,\Psi(1,\gamma,\xi_2))\right)\right] \Bigg\} \\
				& = \min  \Bigg\{ \frac{1}{2}\left(  c_1 - \frac{5}{16}\beta \right),  c_0  + \left( c_1 - \frac{5}{16}\beta \right) \frac{3}{4} - \beta  \frac{1}{2}  \Bigg\}.		
			\end{align*}
			This implies that $\gamma=1$ (and so $\varphi_+$) is optimal at time $t=0$ and state $1$ if and only if	 $\frac{1}{2}\left(  c_1 - \frac{5}{16}\beta \right) > c_0  + \left( c_1 - \frac{5}{16}\beta \right) \frac{3}{4} - \beta  \frac{1}{2}$. Since we have already set $c_1 < \frac{5}{16}\beta$, we set the following stronger condition that guarantees the validity of the inequality above
			\begin{align}\label{Eq_ex_u_cond_on_c0}
				0 < c_0 < \frac{1}{2}\beta.
			\end{align}
		\end{itemize}
		Hence, we have shown that, conditionally on the event $\{\Phi = \varphi_+\}$, $\varphi_+$ is optimal.
		
		
		The case $\{\Phi = \varphi_-\}$ is completely analogous and leads to the same constraints on the coefficients.
		
		
		 Now, let's  discuss in details the case in which the suggestion is  $\{\Phi = \widehat \varphi_+\}$.
		\begin{itemize}
			\item For $t=2$, $x \in \{-1, 1\}^3$,
			\begin{align*}
				& \widehat V_+(2, (x_0,x_1,x_2)) = F(x_2, m_0)= -x_2 \text{M}(m_0) = 0,
			\end{align*}
			
			\item For $t=1$, $x \in \{-1, 1\}^2$,
			\begin{align*}
				\widehat V_+(1, (x_0,x_1)) 
				& = \min_{\gamma \in \{0,1\}} \Bigg\{ c_1 \gamma - x_1 \text{M}(m_1^+) + \widehat{\mathbb{E}}_+\left[\widehat V_+\left(2, (x_0,x_1,\Psi(x_1,\gamma,\xi_2))\right)\right] \Bigg\} \\
				& = \min_{\gamma \in \{0,1\}} \Bigg\{ c_1 \gamma - x_1 \text{M}(m_1^+) \Bigg\}\\
				& =  - x_1 \text{M}(m_1^+) + c_1\min \Big\{0, \gamma\Big\} 
				= - x_1 \text{M}(m_1^+).
			\end{align*}
			This implies that at time $t=1$, in any state $(x_0,x_1)$, $\gamma=0$ is optimal which corresponds to $\widehat \varphi_+$ evaluated at $t=1$.
					
			\item For $t=0$, $x \in \{-1, 1\}$,
			\begin{align*}
				\widehat V_+(0, -1) 
				& = \min_{\gamma \in \{0,1\}} \Bigg\{ c_0 \gamma + \widehat{\mathbb{E}}_+\left[\widehat V_+\left(1, (-1,\Psi(-1,\gamma,\xi_2))\right)\right] \Bigg\} \\
				& = \min_{\gamma \in \{0,1\}} \Bigg\{ c_0 \gamma   +( -\beta) \widehat{\mathbb{E}}_+\left[\Psi(-1,\gamma,\xi_2)\right]  \Bigg\}\\
				&
				= \min  \Bigg\{0,  c_0  + \frac{\beta}{2} \Bigg\}.		
			\end{align*}
			At time $t=0$ in state $x_0=-1$, $\gamma=0$ is optimal which corresponds to $\widehat \varphi_+$ evaluated at $t=0, x=-1$.
			Analogously,
			\begin{align*}
				\widehat V_+(0, 1) 
				& = \min_{\gamma \in \{0,1\}} \Bigg\{ c_0 \gamma + \widehat{\mathbb{E}}_+\left[\widehat V_+\left(1, (1,\Psi(1,\gamma,\xi_2))\right)\right] \Bigg\} 
				= \min  \Bigg\{0,  c_0  - \frac{\beta}{2} \Bigg\}.		
			\end{align*}
			The condition that we have set in Equation \eqref{Eq_ex_u_cond_on_c0} yields that $\gamma=1$ (and so $\widehat \varphi_+$) is optimal at time $t=0$ and state $1$. 
		Hence, we have checked that, conditionally on the event $\{\Phi = \widehat \varphi_+\}$, $\widehat \varphi_+$ is optimal.
		\end{itemize}
		
		
		The computations for the case $\{ \Phi = \widehat{\varphi}_- \}$ are analogous and lead to the same constraints.
		
			
		The last case, namely $\{\Phi=\varphi_0\}$, is the most complicated. Indeed, in this case we have to handle a random measure flow and consequently different flows of measure and different outcomes when evaluating the strategies of the representative player.
		This is done exploiting again the dynamic programming principle.
		\begin{itemize}
			\item For $t=2$, $x \in \{1,-1\}^3$, $(m_0,m_1,m_2) \in \{ m_+, \widehat{m}_+, m_-, \widehat{m}_- \} = \mathcal{D}_{\varphi_0}$,
			\begin{align}
				V_0(2,(x_0,x_1,x_2),(m_0,m_1,m_2))= -x_2 \text{M}(m_2).
			\end{align}
			In particular, we have 
			\begin{align*}
				& V_0(2,(x_0,x_1,1),m_+)
				= V_0(2,(x_0,x_1,-1),m_-)
				= -\text{M}(m_2^+)
				= -\frac{5}{8}\beta,\\
				& V_0(2,(x_0,x_1,-1),m_+)
				= V_0(2,(x_0,x_1,1),m_-)
				= \text{M}(m_2^+)
				= \frac{5}{8}\beta,\\
				&V_0(2,(x_0,x_1,x_2),(m_0,m_1,m_0))
				= 0.
			\end{align*}
			
			\item For $t=1$, $x \in \{1,-1\}^2$, $(m_0,m_1) \in \{m_+^{(1)}, m_-^{(1)} \} = \mathcal{D}_{\varphi_0}^{(1)} $,
			\begin{align}
				V_0 &(1,(x_0,x_1),(m_0,m_1))
				 = \min_{\gamma \in \{0,1\}} \Big\{ c_1 \gamma -x \text{M}(m_1) \nonumber \\
				& + \mathbb{E}_0 \left[ V_0(2, (x_0,x_1,\Psi(x_1,\gamma,\xi_2),(m_0,m_1,\mu_2))) | X^{(1)}=(x_0,x_1), \mu^{(1)}= (m_0,m_1)\right]\Big\}.
			\end{align}
			Exploiting the computations at the previous step, the fact that $\xi_2$ and $(\Phi, \mu, X_0, \xi_1)$ are independent, and the fact that, on the probability space $(\Omega, \mathcal{F}, \mathbb{P}_{\varphi_0})$, $X^{(1)}$ and $\mu_2$ are conditionally independent given $\mu^{(1)}$, we have
		
		\begin{align*}
			V_0& (1,(x_0,1),m_+^{(1)})
			 = \min_{\gamma \in \{0,1\}} \bigg\{ c_1 \gamma - \text{M}(m_1^+) \\
			 & \qquad + \mathbb{E}_0 \left[ V_0(2,(x_0,1,\Psi(1,\gamma,\xi_2)), (m_+^{(1)},\mu^{(2)}) ) | X^{(1)}=(x_0,1),\mu^{(1)}= m_+^{(1)}\right]\bigg\} \nonumber \\
			& = - \text{M}(m_1^+) + \min_{\gamma \in \{0,1\}} \Big\{ c_1 \gamma + \text{M}(m_2^+)\bigg[
			\mathbb{P}_0(\Psi(1,\gamma,\xi_2)=-1,\mu=m_2^+  | X^{(1)}=(x_0,1),\mu^{(1)}= m_+^{(1)})\nonumber \\
			& \qquad - \mathbb{P}_0(\Psi(1,\gamma,\xi_2)=1,\mu=m_2^+  | X^{(1)}=(x_0,1),\mu^{(1)}= m_+^{(1)})\bigg]	\Big\}\nonumber \\
			& = - \text{M}(m_1^+) + \min_{\gamma \in \{0,1\}} \Big\{ c_1 \gamma + \text{M}(m_2^+)\bigg[
			\mathbb{P}_0(\Psi(1,\gamma,\xi_2)=-1)\mathbb{P}_0(\mu=m_2^+  | X^{(1)}=(x_0,1),\mu^{(1)}= m_+^{(1)})\nonumber \\
			& \qquad - \mathbb{P}_0(\Psi(1,\gamma,\xi_2)=1)\mathbb{P}_0(\mu=m_2^+  | X^{(1)}=(x_0,1),\mu^{(1)}= m_+^{(1)})\bigg]	\Big\}\nonumber \\
			& = - \beta + \min_{\gamma \in \{0,1\}} \Big\{ c_1 \gamma + \frac{5}{8}\beta\bigg[
			\mathbb{P}_0(\Psi(1,\gamma,\xi_2)=-1)\mathbb{P}_0(\mu=m_2^+  | \mu^{(1)}= m_+^{(1)})\nonumber \\
			& \qquad - \mathbb{P}_0(\Psi(1,\gamma,\xi_2)=1)\mathbb{P}_0(\mu=m_2^+  | \mu^{(1)}= m_+^{(1)})\bigg]	\Big\}\nonumber \\
			& = - \beta + \min \Big\{0 + \frac{5}{16}\beta \left[\frac{1}{2}-\frac{1}{2}\right], c_1 + \frac{5}{16}\beta \left[\frac{1}{4}-\frac{3}{4}\right]	\Big\}
			= - \beta + \min \Big\{0, c_1 -\frac{5}{32}\beta 	\Big\}
		\end{align*}
		and, similarly,
		
		\begin{align*}
			V_0& (1,(x_0,-1),m_-^{(1)})
			 = \min_{\gamma \in \{0,1\}} \bigg\{ c_1 \gamma + \text{M}(m_1^-) \\
			 & \qquad + \mathbb{E}_0 \left[ V_0(2,(x_0,-1,\Psi(-1,\gamma,\xi_2)), (m_-^{(1)},\mu^{(2)}) ) | X^{(1)}=(x_0,-1),\mu^{(1)}= m_-^{(1)}\right]\bigg\} \nonumber \\
			& = - \beta + \min \Big\{0 + \frac{5}{16}\beta \left[\frac{1}{2}-\frac{1}{2}\right], c_1 + \frac{5}{16}\beta \left[\frac{1}{4}-\frac{3}{4}\right]	\Big\}
			= - \beta + \min \Big\{0, c_1 -\frac{5}{32}\beta 	\Big\}.
		\end{align*}
		This yields that $\gamma=0$ (and so $\varphi_0$) is optimal at time $t=0$ when $(x,m)\in \{((x_0,1),(m_0,m_1^+)),$ $((x_0,-1),(m_0,m_1^-))\}$ if and only if $c_1 -\frac{5}{32}\beta  > 0$.
		Thus, we set the condition
		\begin{align}\label{Eq_ex_l_cond_c1}
			\frac{5}{32}\beta < c_1.
		\end{align}
		
		Analogously, we compute 
		\begin{align*}
			V_0& (1,(x_0,-1),m_+^{(1)}) = \min_{\gamma \in \{0,1\}} \bigg\{ c_1 \gamma + \text{M}(m_1^+)  \\
			 & \qquad + \mathbb{E}_0 \left[ V_0(2,(x_0,-1,\Psi(-1,\gamma,\xi_2)), (m_+^{(1)},\mu^{(2)}) ) | X^{(1)}=(x_0,-1),\mu^{(1)}= m_+^{(1)}\right]\bigg\} \nonumber \\
			& = \beta + \min \Big\{0 + \frac{5}{16}\beta \left[\frac{1}{2}-\frac{1}{2}\right], c_1 + \frac{5}{16}\beta \left[\frac{3}{4}-\frac{1}{4}\right]	\Big\} = \beta + \min \Big\{0, c_1 +\frac{5}{32}\beta 	\Big\},
		\end{align*}
		and 
		\begin{align*}
			V_0& (1,(x_0,1),m_-^{(1)})
			 = \min_{\gamma \in \{0,1\}} \bigg\{ c_1 \gamma + \text{M}(m_1^-) \\
			 & \qquad + \mathbb{E}_0 \left[ V_0(2,(x_0,1,\Psi(1,\gamma,\xi_2)), (m_-^{(1)},\mu^{(2)}) ) | X^{(1)}=(x_0,1),\mu^{(1)}= m_-^{(1)}\right]\bigg\} \nonumber \\
			& = \beta + \min \Big\{0 + \frac{5}{16}\beta \left[\frac{1}{2}-\frac{1}{2}\right], c_1 + \frac{5}{16}\beta \left[\frac{3}{4}-\frac{1}{4}\right]	\Big\}
			= \beta + \min \Big\{0, c_1 +\frac{5}{32}\beta 	\Big\}
		\end{align*}
		Thus, $\gamma=0$ (and so $\varphi_0$) is optimal at time $t=0$ when $(x,m)\in \{((x_0,-1),(m_0,m_1^+)),$ $((x_0,1),(m_0,m_1^-))\}$, without the need of any further constraint.	
			
			\item For $t=0$, $x_0 \in \{1,-1\}$,  
			\begin{align}
				V_0(0,x_0)
				& = V_0(0,x_0,m_0)\nonumber \\
				& = \min_{\gamma \in \{0,1\}} \left\{ c_0 \gamma  + \mathbb{E}_0 \left[ V_0(1, (x_0,\Psi(x_0,\gamma,\xi_1),(m_0,\mu_1)) | X_0=x_0, \mu_0=m_0 \right]\right\}.
			\end{align}
			
			Finally, we study the initial time step in detail, exploiting the fact that $X_0$, $\xi_1$ and $\mu_1$ are independent on the probability space $(\Omega, \mathcal{F},\mathbb{P}_{0})$:
			\begin{align*}
				V_0(0,1)
				& = \min_{\gamma \in \{0,1\}} \left\{ c_0 \gamma  + \mathbb{E}_0 \left[ V_0(1, (1,\Psi(1,\gamma,\xi_1),(m_0,\mu_1)) | X_0=1, \mu_0=m_0 \right]\right\} \nonumber \\
				& = \min_{\gamma \in \{0,1\}} \left\{ c_0 \gamma  + \mathbb{E}_0 \left[ V_0(1, (1,\Psi(1,\gamma,\xi_1),(m_0,\mu_1)) | X_0=1 \right]\right\} \nonumber \\
				& = \min_{\gamma \in \{0,1\}} \bigg\{ c_0 \gamma  + \frac{\beta}{2} \Big[\mathbb{P}_0 \left( \Psi(1,\gamma,\xi_1)=-1 \right)\nonumber +\mathbb{P}_0 \left( \Psi(1,\gamma,\xi_1)=1 \right) \Big] \\
				& \qquad -  \frac{\beta}{2}  \Big[\mathbb{P}_0 \left( \Psi(1,\gamma,\xi_1)=1 \right)
				+ \mathbb{P}_0 \left( \Psi(1,\gamma,\xi_1)=-1 \right) \Big]\bigg\} \nonumber \\
				& = \min \Bigg\{ 0 + \frac{\beta}{2} \left[\left(\frac{1}{2} + \frac{1}{2}\right) - \left(\frac{1}{2} + \frac{1}{2}\right)\right], c_0 + \frac{\beta}{2} \left[\left(\frac{1}{4} + \frac{3}{4}\right) - \left(\frac{1}{4} + \frac{3}{4}\right)\right] \Bigg\}\nonumber \\
				&= \min  \{ 0, c_0 \}
			\end{align*}
			and, similarly,
			\begin{align*}
				V_0(0,-1)
				& = \min_{\gamma \in \{0,1\}} \left\{ c_0 \gamma  + \mathbb{E}_0 \left[ V_0(1, (-1,\Psi(-1,\gamma,\xi_1),(m_0,\mu_1)) | X_0=-1, \mu_0=m_0 \right]\right\} \nonumber \\
				& = \min \Bigg\{ 0 + \frac{\beta}{2} \left[\left(\frac{1}{2} + \frac{1}{2}\right) - \left(\frac{1}{2} + \frac{1}{2}\right)\right], c_0 + \frac{\beta}{2} \left[\left(\frac{1}{4} + \frac{3}{4}\right) - \left(\frac{1}{4} + \frac{3}{4}\right)\right] \Bigg\}\nonumber \\
				&= \min  \{ 0, c_0 \}.
			\end{align*}
			Hence, at time $t=0$, $\gamma=0$ (and so $\varphi_0$) is optimal at any state.
		\end{itemize}
		Thus, we have proved that, conditionally on the event $\{\Phi=\varphi_0\}$, the strategy $\varphi_0$ is optimal, completing the analysis of the various cases.
		Now, putting together the conditions in Equations \eqref{Eq_ex_u_cond_on_c1}, \eqref{Eq_ex_u_cond_on_c0} and \eqref{Eq_ex_l_cond_c1}, we obtain the statement of the theorem.

	\end{proof}



\appendix

\section{Propagation of chaos} \label{SectAppendix}

	First of all, let us recall some basic definitions, for which we refer to \cite{Gottlieb}.
		We denote with $\Pi_n$ the set of permutations over $n$ elements, namely over $[\![1, n]\!].$  
		Consider a probability measure $p \in \mathcal {P(X)}$ and a sequence of symmetric probability measures $\{ p_n\}_{n \in \mathbb{N}}$,  with $p_n \in \mathcal{P}(\mathcal X^n)$, for each $n \in \mathbb{N}$.
			We call the sequence of probability measures $( p_n)_{n \in \mathbb{N}}$  \emph{$p$-chaotic} if for any choice of $k \in \mathbb N$ continuous and bounded functions on $\mathcal{X}$, $g_1, \dots,g_k$, we have
			\begin{equation}
				\lim_{n \to \infty} \int_{\mathcal{X}^n}g_1(s_1) \dots g_k(s_k) p_n(ds_1, \ldots, ds_n)=\prod_{j=1}^k \int_{\mathcal{X}}g_j(s) p(ds).
			\end{equation}
			Then, we call a sequence of symmetric probability measures $( p_n)_{n \in \mathbb{N}}$ \emph{chaotic}, if there exists a probability measure $p \in \mathcal{P(X)}$ s.t. $( p_n)_{n \in \mathbb{N}}$ is $p$-\emph{chaotic}.
			Let $(\beta_n(\cdot,\cdot))_{n \in \mathbb N}$ be a sequence of probability kernels such that, for any $n \in \mathbb N$, \hbox{$\beta_n: \mathcal X^n \times \mathcal{B(X)}^n \to [0,1]$} satisfies the following (symmetry) condition:
			\[
				\beta_n(x,B)
				=\beta_n(\pi x,\pi B), \quad \text{ for any } \pi \in \Pi_n.
			\]
			We say that \emph{propagation of chaos} holds for the sequence $(\beta_n(\cdot,\cdot))_{n \in \mathbb N}$ if $(Up_n)_{n \in \mathbb N}$ is chaotic for any chaotic sequence $(p_n)_{n \in \mathbb N}$ , where, for any $n \in \mathbb N$,
			\[
				Up_n(B):=\int_{\mathcal X^n}\beta_n(x,B)p_n(dx), \quad \text{ for all } B \in \mathcal{B(X)}^n.
			\]
		
		We are going to show that propagation of chaos holds in our case via the following equivalent characterization.
		
		\begin{theorem}[Theorem 4.2, in \cite{Gottlieb}]\label{Gott}
			Consider a couple of  complete and separable metric spaces, $(\mathcal{X}, d_{\mathcal{X}})$ and $(\mathcal{Y}, d_{\mathcal{Y}})$. 
			For each $n \in \mathbb N$, let $\Pi_n$ denote the set of permutations over $[\![1,n]\!]$. 
	Let $\beta_n: \mathcal{X}^n \times \mathcal{B}(\mathcal{Y}^n)\to[0,1]$  be a sequence of Markovian transition functions (probability kernels), i.e. for $x_n \in \mathcal{X}^n$ and $B \in  \mathcal{B}(\mathcal{Y}^n)$, $\beta_N(x_n, B)$ is the probability that the state of the n-particle system  lies in  $B$, given that the initial  state was  $x_n$. 
	Suppose that the transition functions satisfy the following condition:
	\begin{equation}\label{ipot}
		\beta_n(x_n,B)
		=\beta_n(\pi x_n,\pi B), \quad \text{ for all } \pi \in \Pi_n, \text{ for all } x_n \in \mathcal{X}^n \text{ and } \text{ for all } B \in  \mathcal{B}(\mathcal{Y}^n).
	\end{equation}
	Then, $\{ \beta_n \}_{n \in \mathbb{N}}$ propagates chaos if and only if, whenever $\mu_n(x_n):= \frac{1}{n}\sum_{j=1}^{n}\delta_{(x_n)^{j}}\to p$ in $\mathcal{P(X)}$ with $x_n \in \mathcal{X}^n$, then  $\{\widetilde{ \beta}_n(x_n, \cdot) \}_{n \in \mathbb{N}}$ is $F(p)$-chaotic, where $F: \mathcal{P(X)} \to \mathcal{P(Y)} $, is  a continuous function w.r.t. weak topologies and $\widetilde{ \beta}_n$ is defined as
	\[
		\widetilde{\beta}_n(x_n, B)= \frac{1}{n!}\sum_{\pi \in \Pi_n} \beta_n(x_n, \pi B). 
	\]
\end{theorem}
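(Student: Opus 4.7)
The plan is to prove both directions of the equivalence by leveraging a single identity that relates the operator $U$ (applied to symmetrized input measures) to the symmetrized kernel $\widetilde{\beta}_n$. For $x_n \in \mathcal{X}^n$, write $p_n^{\mathrm{sym}}(x_n) := \frac{1}{n!}\sum_{\pi \in \Pi_n} \delta_{\pi x_n}$ for the symmetrization of the Dirac mass at $x_n$. Using the symmetry assumption \eqref{ipot} in the form $\beta_n(\pi x, B) = \beta_n(x, \pi^{-1} B)$, a direct computation gives
\[
Up_n^{\mathrm{sym}}(x_n)(B) = \int_{\mathcal{X}^n} \beta_n(y, B)\, p_n^{\mathrm{sym}}(x_n)(dy) = \frac{1}{n!}\sum_{\pi \in \Pi_n} \beta_n(x_n, \pi^{-1} B) = \widetilde{\beta}_n(x_n, B),
\]
which is the bridge between the two notions.

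For the forward direction ($\Rightarrow$), first I would fix a sequence $(x_n)$ with $\mu_n(x_n) \to p$ weakly and verify that $p_n^{\mathrm{sym}}(x_n)$ is $p$-chaotic: integrating a tensor $g_1 \otimes \cdots \otimes g_k$ of bounded continuous functions against $p_n^{\mathrm{sym}}(x_n)$ amounts, up to $O(k^2/n)$ terms accounting for repeated indices, to $\prod_{j=1}^k \int g_j \, d\mu_n(x_n) \to \prod_{j=1}^k \int g_j \, dp$. By the propagation-of-chaos hypothesis, $Up_n^{\mathrm{sym}}(x_n)$ is chaotic, and by the identity above this is $\widetilde{\beta}_n(x_n,\cdot)$; call its limiting one-particle marginal $F(p,(x_n))$. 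I would then argue $F$ depends only on $p$: for any two sequences $(x_n),(x_n')$ with empirical measures converging to $p$, interleaving them produces a new sequence still satisfying $\mu_n\to p$, so its symmetrization is again $p$-chaotic and the subsequential limits along odd/even indices must coincide. Continuity of $F:\mathcal{P}(\mathcal{X})\to\mathcal{P}(\mathcal{Y})$ follows by a diagonal argument: given $p_k \to p$ and, for each $k$, sequences $x_n^{(k)}$ with $\mu_n(x_n^{(k)}) \to p_k$, choose $n_k\uparrow\infty$ such that $\mu_{n_k}(x_{n_k}^{(k)}) \to p$ weakly; the hypothesis then forces $F(p_k) \to F(p)$.

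For the backward direction ($\Leftarrow$), given a $p$-chaotic symmetric sequence $(p_n)$ (symmetry may be assumed without loss of generality), the same identity yields $Up_n(B) = \int_{\mathcal{X}^n} \widetilde{\beta}_n(x, B)\, p_n(dx)$. The key probabilistic input is Sznitman's classical equivalence: $p$-chaoticity of $(p_n)$ is equivalent to $\mu_n(X_n) \to p$ weakly in $p_n$-probability when $X_n \sim p_n$. Passing to a subsequence we may assume $p_n$-almost sure convergence of $\mu_n(X_n)$ to $p$. For any tensor of bounded continuous test functions, the hypothesis gives pointwise (for $p_n$-a.e.\ $x_n$) convergence of $\int g_1 \otimes \cdots \otimes g_k \, d\widetilde{\beta}_n(x_n,\cdot)$ to $\prod_j \int g_j \, dF(p)$; uniform boundedness by $\prod_j \|g_j\|_\infty$ allows dominated convergence against $p_n$, yielding that $Up_n$ is $F(p)$-chaotic. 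A subsequence argument then shows the original sequence, not just a subsequence, converges.

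The main obstacle will be making the forward direction's construction of $F$ rigorous along two fronts: verifying well-definedness (the interleaving trick must actually produce a chaotic sequence whose $U$-image stays in a single chaos class) and establishing continuity (the diagonal extraction requires controlling the mode of convergence of $\mu_n(x_n^{(k)})$ simultaneously in $n$ and $k$, exploiting metrizability of weak topology on $\mathcal{P}(\mathcal{X})$). In the backward direction, the delicate point is the interchange of weak-limit and $p_n$-integration, which hinges on Sznitman's equivalence converting the hypothesis (phrased along deterministic sequences with convergent empirical measures) into a statement valid under the random sampling $x_n \sim p_n$.
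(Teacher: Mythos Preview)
The paper does not prove this theorem: it is quoted verbatim as Theorem~4.2 from \cite{Gottlieb} and used as a black box to verify propagation of chaos in the specific setting of Claim~\ref{Claim:chaos_propagation}. There is therefore no ``paper's own proof'' to compare against.

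That said, your sketch is largely sound and follows the standard route. The bridge identity $Up_n^{\mathrm{sym}}(x_n)=\widetilde{\beta}_n(x_n,\cdot)$ is correct and is indeed the heart of the matter. One point worth making explicit, because it cleans up your backward direction considerably: $\widetilde{\beta}_n(x_n,\cdot)$ is invariant under permutations of $x_n$ (immediate from \eqref{ipot} and a reindexing of the sum over $\Pi_n$), so it depends on $x_n$ only through the empirical measure $\mu_n(x_n)$. Writing $h_n(x_n)=\int g_1\otimes\cdots\otimes g_k\,d\widetilde{\beta}_n(x_n,\cdot)=\tilde h_n(\mu_n(x_n))$, the hypothesis ``$\mu_n(x_n)\to p$ implies $\widetilde{\beta}_n(x_n,\cdot)$ is $F(p)$-chaotic'' becomes a statement purely about $\tilde h_n$ along deterministic sequences of empirical measures converging to $p$; a standard negation-and-extraction argument upgrades this to a uniform statement (for every $\varepsilon>0$ there exist $\delta>0$ and $N$ such that $|\tilde h_n(m)-c|<\varepsilon$ for all $n\ge N$ and $m\in\mathcal{M}_n$ with $\mathrm{dist}(m,p)<\delta$). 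With this in hand, Sznitman's equivalence plus boundedness gives $\mathbb{E}_{p_n}[\tilde h_n(\mu_n(X_n))]\to c$ directly, and you can dispense with the somewhat awkward ``pass to a subsequence for almost-sure convergence'' step, which is problematic as written since the $p_n$ live on different spaces $\mathcal{X}^n$.

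In the forward direction your well-definedness and continuity arguments are correct in spirit; the interleaving and diagonal extractions work because weak convergence on $\mathcal{P}(\mathcal{X})$ is metrizable (as you note), but you should spell out that the interleaved sequence is itself a legitimate sequence of symmetrized Diracs with convergent empirical measures, so the propagation hypothesis applies to it as a whole and forces the two subsequential chaos limits to agree.
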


		Now, we should reframe the general definitions above in our context.
		Consider $x^N \in \mathcal{X}^{N}$ (initial conditions) and $B \in \mathcal{B}(\mathcal{X}^{N})$. 
		In our case, for an arbitrary fixed $N \in \mathbb N$, the probability kernel is given by
		\begin{equation}\label{Def:Eq_ker_chaos_p}
			\begin{split}
				\beta_N(x^N,B)&
				=\mathbb{P}_{N,m} \circ ({X}^{1,N,m}_1, \dots, {X}^{N,N,m}_1)^{-1}(B)\\& 
				=\mathbb{P}_{N,m} \bigg(\Big(\Psi(0, x^N_j, \frac{1}{N-1}\sum_{k\neq j}\delta_{x^N_k}, \Phi^{N,m}_j(0,x^N_j), \xi^{j,N,m}_1)\Big)_{j = 1}^N \in B\bigg),
			\end{split}
		\end{equation}
		where, in the second line, we have exploited the fact that $\Phi_1^{N,m}=\widetilde \Phi_1^{N,m}$, $\mathbb{P}_{N,m}$-a.s., and   that, since $\gamma_m^N=\rho_1(\cdot|m)^{\otimes N}$, $\Phi^{N,m}_j$ takes values in $\mathcal{R},$ for each $j \in [\![1, N]\!]$.
		
		We have the following propagation of chaos result:
		\begin{claim}\label{Claim:chaos_propagation}
			Propagation of chaos holds for the first time step of our model, i.e. $(\beta_N(\cdot, \cdot))_{N \in \mathbb{N}}$, as defined in Equation \eqref{Def:Eq_ker_chaos_p}, propagates chaos.
		\end{claim}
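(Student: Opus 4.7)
My plan is to invoke Theorem~\ref{Gott}. I would first check the symmetry condition \eqref{ipot} for $\beta_N$. Setting $Y^{N,j}(x):=\Psi(0,x_j,\mu^{j,N}_0(x),\Phi^{N,m}_j(0,x_j),\xi^{j,N,m}_1)$ for $j\in[\![1,N]\!]$, for any $\pi\in\Pi_N$ the elementary identity $\mu^{j,N}_0(\pi x^N)=\mu^{\pi(j),N}_0(x^N)$, combined with the fact that $(\Phi^{N,m}_j,\xi^{j,N,m}_1)_{j=1}^N$ is an i.i.d.\ family and can therefore be freely relabelled, yields that the joint law of $(Y^{N,j}(\pi x^N))_{j=1}^N$ coincides with the law of $(Y^{N,\pi(j)}(x^N))_{j=1}^N$, and hence $\beta_N(\pi x^N,\pi B)=\beta_N(x^N,B)$ for every $B\in\mathcal{B}(\mathcal{X}^N)$.

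Next I would introduce the candidate limit map $F\colon\mathcal{P}(\mathcal{X})\to\mathcal{P}(\mathcal{X})$, defined as the law of $\Psi(0,X,p,\Phi(0,X),\xi)$ with $X\sim p$, $\Phi\sim\rho_1(\cdot|m)$ and $\xi\sim\nu$ mutually independent. Its continuity with respect to weak convergence follows from \hypref{HypPsiContinuity} and the finiteness of $\mathcal{X}$, $\Gamma$ and $\mathcal{R}$ via dominated convergence. According to Theorem~\ref{Gott}, it then remains to check that, whenever $x^N\in\mathcal{X}^N$ satisfies $\mu_N(x^N)\to p$, the symmetrised kernel $\widetilde{\beta}_N(x^N,\cdot)$ is $F(p)$-chaotic.

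The core computation goes as follows. For any bounded continuous $g\colon\mathcal{X}\to\mathbb{R}$ and any $\tau\in[\![1,N]\!]$, set
\[
h^{N}_{g,\tau}(x):=\mathbb{E}\bigl[g\bigl(\Psi(0,x,\mu^{\tau,N}_0(x^N),\Phi(0,x),\xi)\bigr)\bigr],
\]
with $\Phi\sim\rho_1(\cdot|m)$ and $\xi\sim\nu$. Given test functions $g_1,\dots,g_k$, the conditional independence of $(Y^{N,j})_j$ given $x^N$ yields $\int\prod_{l=1}^k g_l(y_l)\,\beta_N(x^N,dy)=\prod_{l=1}^k h^{N}_{g_l,l}(x^N_l)$. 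Symmetrising over $\pi\in\Pi_N$ and discarding the $O(1/N)$ contribution from tuples with repeated indices reduces the problem to proving
\[
\prod_{l=1}^{k}\frac{1}{N}\sum_{\tau=1}^{N}h^{N}_{g_l,\tau}(x^N_\tau)\overset{N\to\infty}{\longrightarrow}\prod_{l=1}^{k}\int_{\mathcal{X}}g_l\,dF(p).
\]

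The main, though ultimately routine, obstacle is the dependence of $h^{N}_{g_l,\tau}$ on $\tau$ through $\mu^{\tau,N}_0(x^N)$. The uniform estimate $\text{dist}(\mu^{\tau,N}_0(x^N),\mu_N(x^N))\le 2/N$, coupled with \hypref{HypPsiContinuity}~1), yields $h^{N}_{g_l,\tau}\to \bar h_{g_l}$ uniformly in $\tau\in[\![1,N]\!]$ and $x\in\mathcal{X}$, where $\bar h_g(x):=\mathbb{E}[g(\Psi(0,x,p,\Phi(0,x),\xi))]$. Together with $\mu_N(x^N)\to p$, this produces the required limit $\prod_{l=1}^{k}\langle p,\bar h_{g_l}\rangle=\prod_{l=1}^{k}\int g_l\,dF(p)$, whence Theorem~\ref{Gott} closes the argument. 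The whole strategy hinges on $\Phi^{N,m}_j$ being $\mathcal{R}$-valued, so that $\Phi^{N,m}_j(0,x^N_j)$ depends only on the individual state $x^N_j$; progressive recommendations would destroy the conditional factorisation on which the argument rests.
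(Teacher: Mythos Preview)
Your proposal is correct and follows the same overall strategy as the paper: verify the permutation symmetry \eqref{ipot}, identify the limit map $F(p)$ as the law of $\Psi(0,X,p,\Phi(0,X),\xi)$, establish its continuity from \hypref{HypPsiContinuity}, and apply Theorem~\ref{Gott}.

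The only difference lies in how the $F(p)$-chaoticity of $\widetilde\beta_N(x^N,\cdot)$ is established. The paper rewrites the leave-one-out measure as $\mu_0^{\lambda(j),N}(x^N)=\tfrac{N}{N-1}\mu_N(x^N)-\tfrac{1}{N-1}\delta_{x^N_{\lambda(j)}}$, so that the integrand becomes a function of the integration variable $y$; it then recognises the average over injections as an integral against the empirical measure $\mu_{N:l}(x^N)$ on $\mathcal{X}^l$, uses the fact that $\mu_{N:l}(x^N)\to p^{\otimes l}$, and invokes an extended continuous mapping theorem (\cite[Theorem I.5.5]{Billingsley}). You instead keep the index $\tau$ explicit, compare the average over injections with the product of one-dimensional averages $\prod_l \tfrac{1}{N}\sum_\tau h^N_{g_l,\tau}(x^N_\tau)$ up to an $O(1/N)$ error, and pass to the limit via the uniform bound $\sup_\tau\text{dist}(\mu_0^{\tau,N}(x^N),p)\to 0$ together with \hypref{HypPsiContinuity}~1). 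Your route is a bit more elementary in that it avoids the external reference to Billingsley and the $\mu_{N:l}$ device; the paper's route packages the combinatorics more cleanly once that machinery is accepted. Both arguments, as you note, rely essentially on the $\Phi^{N,m}_j$ being $\mathcal{R}$-valued so that the one-step kernel factorises over players.
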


		\begin{proof}[Proof of Claim \ref{Claim:chaos_propagation}]
		First of all, we need to prove that condition (\ref{ipot}) in Theorem \ref{Gott} holds. 
		We denote with $\pi$ a generic permutation of $[\![1, N]\!]$.
		For any $x^N \in \mathcal{X}^{N}$ and $B=B_1 \times \ldots \times B_N \in \mathcal{B}(\mathcal{X}^{N})$, with $\pi B= B_{\pi(1)}\times \ldots \times B_{\pi (N)}$, we have
		\[
			\begin{split}
				&\beta_N(\pi x^N,\pi B)\\&
				=\mathbb{P}_{N,m} \bigg(\Big(\Psi(0, x^N_{\pi(j)}, \frac{1}{N-1}\sum_{k\neq j}\delta_{x^N_{\pi(k)}}, \Phi^{N,m}_j(0,x^N_{\pi(j)}), \xi^{j,N,m}_1)\Big)_{j = 1}^N \in \pi B\bigg)=\star.
			\end{split}
		\]
		Since $(\Phi^{N,m}_j)_{j=1}^N\overset{d}{\sim} \rho_1(\cdot|m)^{\otimes N}$ and $(\xi_1^{j,N,m})_{j=1}^N \overset{d}{\sim} \nu^{\otimes N}$ are independent, we reorder the terms to get
		\[
			\begin{split}
				\star &
				=\mathbb{P}_{N,m}\bigg(\Big(\Psi(0, x^N_{\pi(j)}, \frac{1}{N-1}\sum_{k\neq j}\delta_{x^N_{\pi(k)}}, \Phi^{N,m}_{\pi(j)}(0,x^N_{\pi(j)}), \xi^{\pi(j),N,m}_{1})\Big)_{j = 1}^N \in \pi B\bigg)\\&
				=\mathbb{P}_{N,m} \bigg(\Big(\Psi(0, x^N_j, \frac{1}{N-1}\sum_{k\neq j}\delta_{x^N_k}, \Phi^{N,m}_j(0,x^n_j), \xi^{j,N,m}_1)\Big)_{j = 1}^N \in B\bigg)=\beta_N(x^N,B).
			\end{split}
		\]
		Thus, we have shown that condition (\ref{ipot}) holds.
		Now, to conclude that $(\beta_N(\cdot,\cdot))_{N \in \mathbb{N}}$ propagates chaos we need to prove that, for any given sequence $x^N  \in \mathcal X^N$, $N \in \mathbb N$, such that $\mu_N(x^N):=\frac{1}{N}\sum_{j=1}^N \delta_{x^N_j} \to p$ in $\mathcal{P(X)}$, the sequence $(\widetilde{\beta}_N(x^N, \cdot))_{N=1}^\infty$, with   $\widetilde{ \beta}_N$  defined as
		\[
			\widetilde{\beta}_N(x^N, B)
			= \frac{1}{N!}\sum_{\pi \in \Pi_N} \beta_N(x_N, \pi B),  \qquad x^N \in \mathcal X^N, B \in \mathcal{B(X)}^N,
		\]
		is $F(p)$-chaotic, where $F: \mathcal{P(X)} \to 		\mathcal{P(X)}$ is a suitable continuous function.
		\\
		Suppose that $\mu_N(x^N)=\frac{1}{N}\sum_{j=1}^N \delta_{x^N_j} \to p$ in $\mathcal{P(X)}$, let us consider $g_1, \dots, g_l \in C_b(\mathcal{X})$, $l \in \mathbb{N}$, exploiting property (\ref{ipot}) we have
		\begin{displaymath}
			\begin{split}
				\int_{\mathcal{X}^N}g_1(y_1) \ldots g_l(y_l)\widetilde{\beta}_N(x^N,dy_1 \ldots dy_N)&
				= \frac{1}{N!}\sum_{\pi \in \Pi_N}		\int_{\mathcal{X}^N}g_1(y_1) \ldots g_l(y_l)	\beta_N(x^N,dy_{\pi(1)} \ldots dy_{\pi(N)})\\&
				= \frac{1}{N!}\sum_{\pi \in \Pi_N}\int_{\mathcal{X}^N}g_1(y_1) \ldots g_l(y_l)\beta_N(\pi x^N,dy_{1} \ldots dy_{N})=: \star
			\end{split}
		\end{displaymath}
		Now, we exploit the definition of $\beta_N(\cdot, \cdot)$ to gather terms together in order to get
		\begin{displaymath}
			\begin{split}
				\star&
				= \frac{1}{N!}\sum_{\pi \in \Pi_N}\int_{\mathcal{R}^N}\int_{\mathcal{Z}^N} \prod_{j=1}^l g_j(\Psi(0, x_{\pi(j)}^N, \frac{1}{N-1}\sum_{k \neq j}\delta_{x_{\pi(k)}^N}, \phi_j(0,x^N_{\pi(j)}),z_j)) \nu^{\otimes N}(dz_1,\dots, dz_N)\gamma^N_m(d\phi)
				\\&
				= \frac{1}{N!}\sum_{\pi \in \Pi_N} \prod_{j=1}^l \int_{\mathcal{R}}\int_{\mathcal{Z}} g_j(\Psi(0, x_{\pi(j)}^N, \frac{1}{N-1}\sum_{k \neq j}\delta_{x_{\pi(k)}^N}, \phi(0,x^N_{\pi(j)}),z))\nu(dz)\rho_1(d\phi|m)\\&
				= \frac{1}{N!}\sum_{\pi \in \Pi_N} \prod_{j=1}^l \int_{\mathcal{R}}\int_{\mathcal{Z}} g_j(\Psi(0, x_{\pi(j)}^N, \frac{N}{N-1}\mu_N(x^N)-\frac{1}{N-1}\delta_{x_{\pi(j)}^N},\phi(0,x^N_{\pi(j)}),z))\nu(dz)\rho_1(d\phi|m)\\&
				= \frac{(N-l)!}{N!}\sum_{\lambda \in \mathcal{I}_{N:l}} \prod_{j=1}^l \int_{\mathcal{R}}\int_{\mathcal{Z}} g_j(\Psi(0, x_{\lambda(j)}^N, \frac{N}{\Nnn}\mu_N(x^N)-\frac{1}{\Nnn}\delta_{x_{\lambda(j)}^N},\phi(0,x^N_{\lambda(j)}),z))\nu(dz)\rho_1(d\phi|m)\\&
				=:\diamond,
			\end{split}
		\end{displaymath}
		where $\mathcal{I}_{N:l}$ denotes the set of injections from $[\![1,l]\!]$ to $[\![1,N]\!]$.
		\\
		Set $\mu_{N:l}$ to be, for a vector $x^N \in \mathcal{X}^N$, the symmetric probability measure given by
		\begin{equation}
			\mu_{N:l}(x^N)=\frac{(N-l)!}{N!}\sum_{\lambda \in \mathcal{I}_{N:l}}\delta_{(x^N_{\lambda(1)},\dots,x^N_{\lambda(l)})}.
		\end{equation}
		It is possible to show, see \cite{Gottlieb} pg. 29, that $\mu_{N}(x^N)\underset{N \to \infty}{\longrightarrow}p$ implies $\mu_{N:l}(x^N)\underset{N \to \infty}{\longrightarrow}p^{\otimes l}$.
		We have
		\begin{displaymath}
			\begin{split}
				\diamond&
				= \frac{(N-l)!}{N!}\sum_{\lambda \in \mathcal{I}_{N:l}} \prod_{j=1}^l \int_{\mathcal{R}}\int_{\mathcal{Z}} g_j(\Psi(0, x_{\lambda(j)}^N, \frac{N}{\Nnn}\mu_N(x^N)-\frac{1}{\Nnn}\delta_{x_{\lambda(j)}^N},\phi(0,x^N_{\lambda(j)}),z))\nu(dz)\rho_1(d\phi|m)\\&
				=\int_{\mathcal{X}^l} \prod_{j=1}^l \int_{\mathcal{R}}\int_{\mathcal{Z}} g_j(\Psi(0, y_j, \frac{N}{\Nnn}\mu_N(x^N)-\frac{1}{\Nnn}\delta_{y_j}, \phi(0, y_j),z))\nu(dz)\rho_1(d\phi|m) \mu_{N:l}(x^N)(dy_1,\ldots, dy_l)\\&
				=\int_{\mathcal{X}^l}\mu_{N:l}(x^N)(dy) \bigg\{ \prod_{j=1}^l \int_{\mathcal{R}} \rho_1(d\phi|m) \int_{\mathcal{Z}}\nu(dz)  g_j(\Psi(0, y_j, \frac{N}{\Nnn}\mu_N(x^N)-\frac{1}{\Nnn}\delta_{y_j}, \phi(0, y_j),z))  \bigg\}\\&
				\underset{N \to \infty}{\longrightarrow} 
\int_{\mathcal{X}^l}p^{\otimes l}(dy) \bigg\{ \prod_{j=1}^l \int_{\mathcal{R}} \rho_1(d\phi|m) \int_{\mathcal{Z}}\nu(dz)  g_j(\Psi(0, y_j, p , \phi(0, y_j),z))  \bigg\}\\&
				\qquad = \prod_{j=1}^l \int_{\mathcal{X}}p(dy) \int_{\mathcal{R}} \rho_1(d\phi|m) \int_{\mathcal{Z}}\nu(dz)   g_j(\Psi(0, y, p , \phi(0, y),z)) 
				= \prod_{j=1}^l \int_{\mathcal{X}}g_j(x) q(p)(dx),
			\end{split}
		\end{displaymath}
		where $q(p)$ is the image of $(p,\rho_1(\cdot|m),\nu)$ via the mapping $(y,\phi,z) \mapsto \Psi(0,y,p,\phi(0,y),z)$.
		In particular, the convergence in the fourth line is proved as follows, exploiting a generalization of the continuous mapping theorem, namely \cite[Theorem I.5.5]{Billingsley}.
		In the notation of \cite[Theorem I.5.5]{Billingsley}, we have $\mathbf{P}_N = \mu_{N:l}(x^N) \overset{N \to \infty}{\longrightarrow}p^{\otimes l}$, by assumption.
		Furthermore, we consider the following functions      
$h_N: \mathcal X^l \to [-\prod_{j=1}^l\|g_j\|_{\infty},\prod_{j=1}^l\|g_j\|_{\infty}]$, for all $N \in \mathbb N$, and 
$h: \mathcal X^l \to [-\prod_{j=1}^l\|g_j\|_{\infty},\prod_{j=1}^l\|g_j\|_{\infty}]$, 
defined, for $y \in \mathcal{X}$, by
		\[
			\begin{split}
				&h_N(y)
				:=\prod_{j=1}^l \int_{\mathcal{R}}			\int_{\mathcal{Z}} g_j(\Psi(0, y_j, \frac{N}{\Nnn}\mu_N(x^N)-		\frac{1}{\Nnn}\delta_{y_j}, \phi(0, y_j),z))\nu(dz)\rho_1(d\phi|m), 
				\\&
				h(y)
				:=\prod_{j=1}^l \int_{\mathcal{R}}		\int_{\mathcal{Z}} g_j(\Psi(0, y_j, p, \phi(0, y_j),z))\nu(dz)	\rho_1(d\phi|m).
			\end{split}
		\] 
		We have that both previous functions are measurable, since the finite set $\mathcal X^l$ is equipped with the discrete metric.
		Finally, we show that, for any $y \in \mathcal X^l$, $h_N(y)\to h(y)$, as $N \to \infty$. 
		We prove this for $l=2$, but the result can be extended to any $l \in \mathbb N$. 
		In the following we exploit the notation $\bar{\epsilon}_{N,j}:=\frac{N}{N-1}\mu_N(x^N)-\frac{1}{N-1}\delta_{y_j}$. 
		Exploiting the fact that $g_1, g_2 \in \mathcal C_b(\mathcal X)$ and \hypref{HypPsiContinuity}, we have
		\begin{align*}
				&|h_N(y)-h(y)| \\
				&=\Big|\prod_{j=1}^2 \int_{\mathcal{R}}\int_{\mathcal{Z}} g_j(\Psi(0, y_j, \bar{\epsilon}_{N,j}, \phi(0, y_j),z))\nu(dz)\rho_1(d\phi|m)-\prod_{j=1}^2 \int_{\mathcal{R}}\int_{\mathcal{Z}} g_j(\Psi(0, y_j, p, \phi(0, y_j),z))\nu(dz)\rho_1(d\phi|m)\Big|
				\\&
				\leq \Big|\prod_{j=1}^2 \int_{\mathcal{R}}\int_{\mathcal{Z}} g_j(\Psi(0, y_j, \bar{\epsilon}_{N,j}, \phi(0, y_j),z))\nu(dz)\rho_1(d\phi|m)
				\\&
				\qquad- 
				\int_{\mathcal{R}}\int_{\mathcal{Z}} g_1(\Psi(0, y_1, \bar{\epsilon}_{N,1}, \phi(0, y_1),z))\nu(dz)\rho_1(d\phi|m)\int_{\mathcal{R}}\int_{\mathcal{Z}} g_2(\Psi(0, y_2, p, \phi(0, y_2),z))\nu(dz)\rho_1(d\phi|m)
				\Big|\\&
				\quad +\Big|
				\int_{\mathcal{R}}\int_{\mathcal{Z}} g_1(\Psi(0, y_1, \bar{\epsilon}_{N,1}, \phi(0, y_1),z))\nu(dz)\rho_1(d\phi|m)\int_{\mathcal{R}}\int_{\mathcal{Z}} g_2(\Psi(0, y_2, p, \phi(0, y_2),z))\nu(dz)\rho_1(d\phi|m)
				\\&
				\qquad-
				\prod_{j=1}^2 \int_{\mathcal{R}}\int_{\mathcal{Z}} \phi_j(\Psi(0, y_j, p, \phi(0, y_j),z))\nu(dz)\rho_1(d\phi|m)
				\Big|\\&
				\leq \|g_1\|_{\infty} \Big|\int_{\mathcal{R}}\int_{\mathcal{Z}} g_2(\Psi(0, y_2, \bar{\epsilon}_{N,2}, \phi(0, y_2),z))- g_2(\Psi(0, y_2, p, \phi(0, y_2),z))\nu(dz)\rho_1(d\phi|m)\Big|
				\\&
				\qquad
				+ \|g_2\|_{\infty}
				\Big|\int_{\mathcal{R}}\int_{\mathcal{Z}} 		g_1(\Psi(0, y_1, \bar{\epsilon}_{N,1}, \phi(0, y_1),z))- 	g_1(\Psi(0, y_1, p, \phi(0, y_1),z))\nu(dz)\rho_1(d\phi|m)\Big|
				\\&
				\leq 2\|g_1\|_{\infty}\|g_2\|_{\infty}
				\Big|\int_{\mathcal{R}}\int_{\mathcal{Z}} \bold{1}_{\Psi(0, y_2, \bar{\epsilon}_{N,2}, \phi(0, y_2),z)\neq \Psi(0, y_2, p, \phi(0, y_2),z)}\nu(dz)\rho_1(d\phi|m)\Big|
				\\&
				\qquad
				+2\|g_1\|_{\infty}\|g_2\|_{\infty}
				\Big|\int_{\mathcal{R}}\int_{\mathcal{Z}} 		\bold{1}_{\Psi(0, y_1, \bar{\epsilon}_{N,1}, \phi(0, y_1),z)\neq 				\Psi(0, y_1, p, \phi(0, y_1),z)}\nu(dz)\rho_1(d\phi|m)\Big|
				\\&
				\leq2\|g_1\|_{\infty}\|g_2\|_{\infty}
				(w(dist( \bar{\epsilon}_{N,1},p))+w(dist( \bar{\epsilon}_{N,2},p)))
\overset{N \to \infty}{\longrightarrow}0.
			\end{align*}
		Indeed, $\lim_{s\to 0^+}w(s)=0$ and, for any 
		$
		j \in \{1,2\},
		\text{dist}( \bar{\epsilon}_{N,j},p)
		\leq \text{dist}( \bar{\epsilon}_{N,j}, \mu_N(x^N))
		+ \text{dist}(\mu_N(x^N),p).
		$
		\\
		The second term on the right vanishes as $N \to \infty$ by assumption and
		\[
			\begin{split}
				\text{dist}\left( \frac{N}{\Nnn}\mu_N(x^N)-\frac{1}{\Nnn}\delta_{y_j}, \mu_N(x^N)\right)&
				=\frac{1}{2} \sum_{z \in \mathcal X}| \frac{N}{\Nnn}\mu_N(x^N)(z)-\frac{1}{\Nnn}\delta_{y_j}(z)-\mu_N(x^N)(z)|\\&
				=\frac{1}{2(N-1)} \sum_{z \in \mathcal X}| \mu_N(x^N)(z)-\delta_{y_j}(z)|
				\leq \frac{1}{N-1}\to 0.
			\end{split}
		\]

		Thence, an application of \cite[Theorem I.5.5]{Billingsley} yields the desired convergence.
		To conclude we need to show that the function $q: \mathcal{P(X)} \to \mathcal{P(X)}$, defined,  for $p \in \mathcal{P(X)}$, as the image of $(p,\rho_1(\cdot|m),\nu)$ via the mapping $(y,\phi,z) \mapsto \Psi(0,y,p,\phi(0,y),z)$, is a continuous function of $p$. 
		This function $q(p)$ corresponds to the function $F(p)$ in the statement of  Theorem \ref{Gott}. 
		Let's consider a sequence $\{ p_n \}_{n \in \mathbb{N}}\subseteq \mathcal{P(X)}$, such that $p_n \underset{N \to \infty}{\longrightarrow} p$ weakly and let $B \in \mathcal{B(X)}$. 
		Exploiting hypothesis \hypref{HypPsiContinuity}, we are able to  deduce 
		\begin{align*}
				&|q(p_n)(B)-q(p)(B)|
				\\&
				\leq \bigg|
				\int_{\mathcal{X}}p_n(dy) \int_{\mathcal{R}} \rho_1(d\phi|m) \int_{\mathcal{Z}}\nu(dz)  \bold{1}_{B}(\Psi(0, y, p_n , \phi(0, y),z))
				\\&
				\qquad -
				\int_{\mathcal{X}}p(dy) \int_{\mathcal{R}} \rho_1(d\phi|m) \int_{\mathcal{Z}}\nu(dz)  \bold{1}_{B}(\Psi(0, y, p , \phi(0, y),z))
				\bigg|\\&
				\leq \bigg|
				\int_{\mathcal{X}}p_n(dy) \int_{\mathcal{R}} \rho_1(d\phi|m) \int_{\mathcal{Z}}\nu(dz) \Big\{ \bold{1}_{B}(\Psi(0, y, p_n , \phi(0, y),z))
- \bold{1}_{B}(\Psi(0, y, p , \phi(0, y),z))\Big\} 
\bigg| \\&
				\qquad + \bigg|\int_{\mathcal{X}}(p_n-p)(dy) \int_{\mathcal{R}} \rho_1(d\phi|m) \int_{\mathcal{Z}}\nu(dz)   \bold{1}_{B}(\Psi(0, y, p , \phi(0, y),z)   
				\bigg|\\&
				\leq 
				\int_{\mathcal{X}}p_n(dy) \int_{\mathcal{R}} \rho_1(d\phi|m) \int_{\mathcal{Z}}\nu(dz)   \bold{1}_{\{ \Psi(0, y, p_n , \phi(0, y),z) \neq \Psi(0, y, p , \phi(0, y),z)\}}
 				 \\&
 				\qquad + 
				\bigg|\int_{\mathcal{X}}(p_n-p)(dy) 			\int_{\mathcal{R}} \rho_1(d\phi|m) \int_{\mathcal{Z}}\nu(dz)  	\bigg|
				\\&
				\leq \int_{\mathcal{X}}p_n(dy) 		\int_{\mathcal{R}} \rho_1(d\phi|m) w(\text{dist}(p_n,p)) 
				 + \int_{\mathcal{X}}|p_n-p|(dy)  
				\\&
				\leq w(\text{dist}(p_n,p))+\text{dist}(p_n,p).
		\end{align*}
		This fact, in particular, implies 
		\[
			\begin{split}
				\text{dist}(q(p_n),q(p))&
				= d_{TV}(q(p_n),q(p))=\sup_{B \in \mathcal{B(X)}} |q(p_n)(B)-q(p)(B)| 
				\\& 
				\leq  w(\text{dist}(p_n,p))+\text{dist}(p_n,p)
				\underset{N \to \infty}{\longrightarrow}0,
			\end{split}
		\]
		where $d_{TV}$ denotes the distance in total variation, 
that coincides with the distance $\text{dist}(\cdot,\cdot)$, 
compatible with weak topology, because the set $\mathcal{X}$ is finite.
		So, we get the continuity of $q$ and conclude the proof of chaos propagation.
		\end{proof}

\newpage
\bibliographystyle{plain}
\bibliography{references}

\end{document}